\pgfplotsset{compat=1.10}
\crefname{figure}{Figure}{Figures} 
\crefname{equation}{}{} 
\crefname{assumption}{Assumption}{Assumptions}
\crefname{subsection}{Subsection}{Subsections}
\newcounter{cdrow}
\newtheorem{theorem}{Theorem}[]
\newtheorem*{theorem*}{Theorem}
\newtheorem{corollary}[theorem]{Corollary}
\newtheorem{lemma}[theorem]{Lemma}
\newtheorem*{claim*}{Claim}
\theoremstyle{definition}
\newtheorem{definition}[theorem]{Definition}
\newtheorem*{definition*}{Definition}
\theoremstyle{remark}
\newtheorem{remark}[theorem]{Remark}
\newtheorem*{example*}{Example}
\def\log{{\rm log}}
\newcommand*{\op}{%
  \DOTSB
  \mathop{\vphantom{\bigoplus}\mathpalette\matt@op\relax}%
  \slimits@
}
\newcommand\matt@op[2]{%
  \vcenter{\m@th\hbox{\resizebox{\widthof{$#1\bigoplus$}}{!}{$\boxplus$}}}%
}
\newcommand{\R}{\mathbb{R}}
\def\@biblabel#1{}
\@citea\NAT@hyper@{%
     \NAT@nmfmt{\NAT@nm}%
     \hyper@natlinkbreak{\NAT@aysep\NAT@spacechar}{\@citeb\@extra@b@citeb}%
     \NAT@date}}
\@citea\NAT@nmfmt{\NAT@nm}%
\NAT@spacechar\NAT@hyper@{\NAT@date}}{}{}
\@citea\NAT@hyper@{%
     \NAT@nmfmt{\NAT@nm}%
     \hyper@natlinkbreak{\NAT@spacechar\NAT@@open\if*#1*\else#1\NAT@spacechar\fi}%
       {\@citeb\@extra@b@citeb}%
     \NAT@date}}
\@citea\NAT@nmfmt{\NAT@nm}%
\fi\NAT@hyper@{\NAT@date}}
\begin{document}
\def\spacingset#1{\renewcommand{\baselinestretch}%
{#1}\small\normalsize} \spacingset{1}

\begin{flushleft}
{\Large{\textbf{Wavelet-Based Density Estimation for Persistent Homology}}}
\newline
\\
Konstantin H\"{a}berle$^{1,2,\dagger}$, Barbara Bravi$^{1,\dagger}$, and Anthea Monod$^{1,\dagger}$
\\
\bigskip
\bf{1} Department of Mathematics, Imperial College London, UK
\\
\bf{2} Chair for Mathematical Information Science, ETH Zurich, Switzerland
\\
\bigskip
$\dagger$ Corresponding e-mails: haeberlk@ethz.ch; b.bravi21@imperial.ac.uk; a.monod@imperial.ac.uk
\end{flushleft}


\section*{Abstract}
Persistent homology is a central methodology in topological data analysis that has been successfully implemented in many fields and is becoming increasingly popular and relevant.  The output of persistent homology is a persistence diagram---a multiset of points supported on the upper half plane---that is often used as a statistical summary of the topological features of data.  In this paper, we study the random nature of persistent homology and estimate the density of expected persistence diagrams from observations using wavelets; we show that our wavelet-based estimator is optimal.  Furthermore, we propose an estimator that offers a sparse representation of the expected persistence diagram that achieves near-optimality. We demonstrate the utility of our contributions in a machine learning task in the context of dynamical systems.


\paragraph{Keywords:} Nonparametric density estimation; wavelets; persistent homology; persistence measures.

\section{Introduction}
\label{sec:intro}
Topological data analysis (TDA) is a recent field of data science emanated from applied and computational topology that extracts and studies topological information from complex and high-dimensional data structures using theory from algebraic topology. A fundamental method in TDA is \emph{persistent homology}, which adapts the classical algebraic topological concept of \emph{homology} to study topological invariants associated with a dataset; while the homology of a dataset corresponds to its ``shape,'' persistent homology captures both the ``shape'' and ``size'' of the dataset. Persistent homology has been successfully employed in a variety of applications including signal analysis \citep{TDA_signal_analysis}, computer vision \citep{TDA_CompVision}, cancer patient survival \citep{TDA_progression_disease}, and viral evolution \citep{TDA_viral_evolution}, and is becoming an increasingly popular technique in data analysis.

\emph{Persistence diagrams} are the resulting objects of persistent homology; they encode the lifetimes of topological features of datasets and thus exhibit a random nature. The randomness of persistence diagrams is a challenging topic of study because the algebraic construction of persistent homology induces a highly complex geometric space \citep{turner-2014-frechet}. In this paper, we focus on persistence diagrams in random settings and study their distributional behavior nonparametrically using methods from uncertainty quantification. Specifically, we propose a Haar wavelet estimator to estimate the \emph{expected persistence diagram}, which, under mild assumptions, is known to be absolutely continuous with respect to the Lebesgue measure and therefore has a Lebesgue density \citep{Chazal_Divol_EPD}.  We prove that the Haar wavelet estimator is \emph{minimax}: its maximal risk is minimal among all possible estimators of the expected persistence diagram. Furthermore, we also propose a thresholding Haar wavelet estimator, which offers a sparse (or \emph{compressed}) representation of the expected persistence diagram and thus computational advantages in practical settings, and show that it achieves near-optimal minimax rates.  This paper is, to the best of our knowledge, the first to use wavelet estimators to estimate expected persistence diagrams.

Our work is largely inspired by previous work by \cite{Chazal_Divol_EPD}, who propose a kernel density estimator (KDE) and prove its convergence to the true density function in the $L^2$-norm.  However, it has since been shown by \cite{divol2021estimation} that the \emph{optimal partial transport} metric $\mathrm{OT}_p$, rather than the $L^2$ metric, is the natural metric for expected persistence diagrams; this metric was used to show that the empirical mean of persistence diagrams is minimax. The resulting proposed estimator, however, does not have a Lebesgue density in general and its support moreover tends to be very large, making it impractical in applications.  In addition to our proposed Haar wavelet comprising a Lebesgue density and our thresholding Haar wavelet being sparse, compared to KDEs, Haar wavelets have the advantage of being locally adaptive.  The local adaptivity property enables a finer local analysis, which may be more difficult to achieve with kernels depending on choice of bandwidth.

The remainder of this paper is organized as follows. \Cref{sec:preliminaries} provides the formal background to persistent homology, persistence measures, and their resulting metric space, which is the context of our work. \Cref{sec:minimax} presents our main result of minimaxity of wavelet estimators for expected persistence diagrams as well as the thresholding wavelet estimators and discusses their properties. \Cref{sec:simulations} presents numerical experiments and verifications of our derived theory. Furthermore, we provide a practical implementation of our work to a classification problem in the context of dynamical systems. We conclude our paper with a discussion of our work and some ideas for future research in \cref{sec:end}.

\section{Preliminaries}
\label{sec:preliminaries}
In this section, we overview the construction of persistence diagrams from persistent homology and discuss their properties. We then present the generalization of persistence diagrams to persistence measures as the setting in which we work. In particular, we present the expected persistence diagram as our object of study which we aim to estimate. 

In addition, we recall basic concepts and results from wavelet theory that are essential for constructing nonparametric density estimators. 

\subsection{Persistent Homology}
Persistent homology studies the topological features of a simplicial complex or topological space across multiple scales. In particular, it tracks the evolution of connected components, loops, and higher-dimensional cavities with respect to a nested sequence of topological spaces, i.e., a \emph{filtration}. For computational feasibility due to the existence of efficient algorithms, discretizations of these topological spaces are often considered in the form of simplices and simplicial complexes that can be seen as skeletal representations of the topological space.  A $k$-simplex is the convex hull of $k+1$ affinely independent points $x_0, x_1, \ldots, x_k$; a set of $k$-simplices assembled in a combinatorial fashion forms a simplicial complex $K$. See \cref{fig:simplicial_complex} for an illustration.

\begin{figure}[t!]
	\centering
	\begin{subfigure}{0.49\textwidth}
		\includegraphics[scale=0.9]{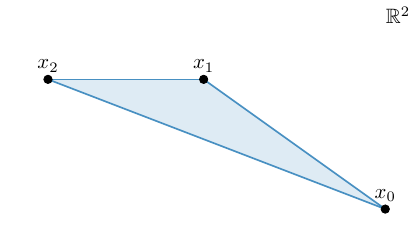}
		\caption{}
		\label{subfig:simplex}
	\end{subfigure}
	\begin{subfigure}{0.49\textwidth}
		\includegraphics[scale=0.9]{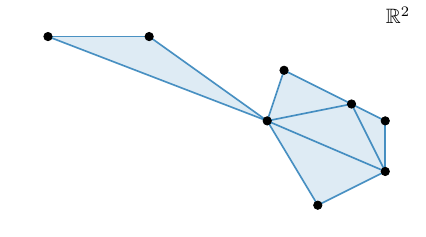}
		\caption{}
		\label{subfig:simplicialcomplex}
	\end{subfigure}
	\caption{\subref{subfig:simplex} $2$-simplex formed by $3$ affinely independent points $x_0,x_1,x_2 \in \mathbb R^2$. \subref{subfig:simplicialcomplex} Simplicial complex consisting of $2$-simplices.}
	\label{fig:simplicial_complex}
\end{figure}

Given a point cloud $X$, an important and widely-used simplicial complex is the \emph{Vietoris--Rips (VR) complex}, used to build the Vietoris--Rips (or simply Rips) filtration $(K_t)_{t \geq 0} \coloneqq (\mathrm{VR}(X,t))_{t\geq 0}$. The VR complex for $X$ and filtration parameter $t$, denoted $\mathrm{VR}(X,t)$, is the simplicial complex whose vertex set is $X$, where $\{x_0,\dots,x_k\}\subseteq X$ spans a $k$-simplex if and only if the diameter between two points, $x_i$ and $x_j$, is less than the threshold value, $t$: $d^X(x_i,x_j)\leq t$, for all $i,j\in\{0,1,\dots,k\}$, where $d^X$ denotes a metric on $X$.  In this paper, we focus on point clouds as our topological space of interest and their discretization by VR complexes.

Computing the $k$th simplicial homology groups $H_k(K_t)$ with coefficients in a field for dynamic values of $t\geq 0$ yields a sequence of vector spaces. Note that $K_t \subseteq K_{t'}$ whenever $t\leq t'$, which induces linear maps between $H_k(K_t)$ and $H_k(K_{t'})$. The family of vector spaces together with their linear maps is called a \emph{persistence module}.  Persistence modules are uniquely decomposable into a direct sum of interval modules up to permutations \citep{zomorodian_computing_2005}. The collection of these indecomposables is referred to as a \emph{barcode} where the intervals are \emph{bars}, each representing the evolution of a topological feature ($k$-dimensional hole). In particular, if $(t_1,t_2)\subset \mathbb R$ is such an interval, then the corresponding topological feature appears at scale $t_1$ in the filtration $(K_t)_{t\geq 0}$ and disappears at scale $t_2$. We call $t_1$ and $t_2$ the \emph{birth} and \emph{death times}, respectively. This information can be encoded in a \emph{persistence diagram} by taking birth and death times as ordered pairs and plotting them. The persistence diagram is then the output of persistent homology, see \cref{fig:PH_procedure,fig:PD}.

\begin{figure}[t!]
	\centering
	\begin{subfigure}[t]{0.3\textwidth}
		\centering
		\begin{tikzpicture}[scale=0.63, every node/.style={scale=1}]
\definecolor{crimson2143940}{RGB}{214,39,40}
\definecolor{darkgray176}{RGB}{176,176,176}
\definecolor{darkorange25512714}{RGB}{255,127,14}
\begin{axis}[
unit vector ratio*=1 1 1,
tick align=outside,
tick pos=left,
x grid style={darkgray176},
xmin=-.3, xmax=4.5,
xtick style={color=black},
y grid style={darkgray176},
ymin=-0.5, ymax=3.2,
ytick style={color=black}
]
\addplot [draw=crimson2143940, fill=crimson2143940, mark=*, only marks]
table{%
x  y
1.5 0.5
0.8 0.8
0.75 1.25
0.7 1.7
1.2 2.2
1.65 2.25
2.2 2.1
2.5 1.5
2.2 0.8
2.5 1.2
2.55 0.45
3.2 0.55
3.5 1
3.1 1.5
};
\draw[draw=black,fill=darkorange25512714,opacity=0.1] (axis cs:1.5,0.5) circle (0);
\draw[draw=black,fill=darkorange25512714,opacity=0.1] (axis cs:0.8,0.8) circle (0);
\draw[draw=black,fill=darkorange25512714,opacity=0.1] (axis cs:0.75,1.25) circle (0);
\draw[draw=black,fill=darkorange25512714,opacity=0.1] (axis cs:0.7,1.7) circle (0);
\draw[draw=black,fill=darkorange25512714,opacity=0.1] (axis cs:1.2,2.2) circle (0);
\draw[draw=black,fill=darkorange25512714,opacity=0.1] (axis cs:1.65,2.25) circle (0);
\draw[draw=black,fill=darkorange25512714,opacity=0.1] (axis cs:2.2,2.1) circle (0);
\draw[draw=black,fill=darkorange25512714,opacity=0.1] (axis cs:2.5,1.5) circle (0);
\draw[draw=black,fill=darkorange25512714,opacity=0.1] (axis cs:2.2,0.8) circle (0);
\draw[draw=black,fill=darkorange25512714,opacity=0.1] (axis cs:2.5,1.2) circle (0);
\draw[draw=black,fill=darkorange25512714,opacity=0.1] (axis cs:2.55,0.45) circle (0);
\draw[draw=black,fill=darkorange25512714,opacity=0.1] (axis cs:3.2,0.55) circle (0);
\draw[draw=black,fill=darkorange25512714,opacity=0.1] (axis cs:3.5,1) circle (0);
\draw[draw=black,fill=darkorange25512714,opacity=0.1] (axis cs:3.1,1.5) circle (0);
\end{axis}
\end{tikzpicture}
		\caption{$t=0$}
		\label{fig:t=0}
	\end{subfigure}
	\hfill
	\begin{subfigure}[t]{0.3\textwidth}
		\centering
		\begin{tikzpicture}[scale=0.63, every node/.style={scale=1}]
\definecolor{crimson2143940}{RGB}{214,39,40}
\definecolor{darkgray176}{RGB}{176,176,176}
\definecolor{darkorange25512714}{RGB}{255,127,14}
\definecolor{steelblue31119180}{RGB}{31,119,180}
\begin{axis}[
unit vector ratio*=1 1 1,
tick align=outside,
tick pos=left,
x grid style={darkgray176},
xmin=-.3, xmax=4.5,
xtick style={color=black},
y grid style={darkgray176},
ymin=-0.5, ymax=3.2,
ytick style={color=black}
]
\addplot [draw=crimson2143940, fill=crimson2143940, mark=*, only marks]
table{%
x  y
1.5 0.5
0.8 0.8
0.75 1.25
0.7 1.7
1.2 2.2
1.65 2.25
2.2 2.1
2.5 1.5
2.2 0.8
2.5 1.2
2.55 0.45
3.2 0.55
3.5 1
3.1 1.5
};
\draw[draw=black,fill=darkorange25512714,opacity=0.1] (axis cs:1.5,0.5) circle (0.25);
\draw[draw=black,fill=darkorange25512714,opacity=0.1] (axis cs:0.8,0.8) circle (0.25);
\draw[draw=black,fill=darkorange25512714,opacity=0.1] (axis cs:0.75,1.25) circle (0.25);
\draw[draw=black,fill=darkorange25512714,opacity=0.1] (axis cs:0.7,1.7) circle (0.25);
\draw[draw=black,fill=darkorange25512714,opacity=0.1] (axis cs:1.2,2.2) circle (0.25);
\draw[draw=black,fill=darkorange25512714,opacity=0.1] (axis cs:1.65,2.25) circle (0.25);
\draw[draw=black,fill=darkorange25512714,opacity=0.1] (axis cs:2.2,2.1) circle (0.25);
\draw[draw=black,fill=darkorange25512714,opacity=0.1] (axis cs:2.5,1.5) circle (0.25);
\draw[draw=black,fill=darkorange25512714,opacity=0.1] (axis cs:2.2,0.8) circle (0.25);
\draw[draw=black,fill=darkorange25512714,opacity=0.1] (axis cs:2.5,1.2) circle (0.25);
\draw[draw=black,fill=darkorange25512714,opacity=0.1] (axis cs:2.55,0.45) circle (0.25);
\draw[draw=black,fill=darkorange25512714,opacity=0.1] (axis cs:3.2,0.55) circle (0.25);
\draw[draw=black,fill=darkorange25512714,opacity=0.1] (axis cs:3.5,1) circle (0.25);
\draw[draw=black,fill=darkorange25512714,opacity=0.1] (axis cs:3.1,1.5) circle (0.25);
\addplot [semithick, steelblue31119180, opacity=0.8]
table {%
0.8 0.8
0.75 1.25
};
\addplot [semithick, steelblue31119180, opacity=0.8]
table {%
0.75 1.25
0.7 1.7
};
\addplot [semithick, steelblue31119180, opacity=0.8]
table {%
1.2 2.2
1.65 2.25
};
\addplot [semithick, steelblue31119180, opacity=0.8]
table {%
2.5 1.5
2.5 1.2
};
\addplot [semithick, steelblue31119180, opacity=0.8]
table {%
2.2 0.8
2.5 1.2
};
\addplot [semithick, steelblue31119180, opacity=0.8]
table {%
2.2 0.8
2.55 0.45
};
\end{axis}
\end{tikzpicture}
		\caption{$t=0.5$}
		\label{fig:t=0.25}
	\end{subfigure}
	\hfill
	\begin{subfigure}[t]{0.3\textwidth}
		\centering
\begin{tikzpicture}[scale=0.63, every node/.style={scale=1}]
\definecolor{crimson2143940}{RGB}{214,39,40}
\definecolor{darkgray176}{RGB}{176,176,176}
\definecolor{darkorange25512714}{RGB}{255,127,14}
\definecolor{steelblue31119180}{RGB}{31,119,180}
\begin{axis}[
unit vector ratio*=1 1 1,
tick align=outside,
tick pos=left,
x grid style={darkgray176},
xmin=-.3, xmax=4.5,
xtick style={color=black},
y grid style={darkgray176},
ymin=-0.5, ymax=3.2,
ytick style={color=black}
]
\addplot [draw=crimson2143940, fill=crimson2143940, mark=*, only marks]
table{%
x  y
1.5 0.5
0.8 0.8
0.75 1.25
0.7 1.7
1.2 2.2
1.65 2.25
2.2 2.1
2.5 1.5
2.2 0.8
2.5 1.2
2.55 0.45
3.2 0.55
3.5 1
3.1 1.5
};
\draw[draw=black,fill=darkorange25512714,opacity=0.1] (axis cs:1.5,0.5) circle (0.33);
\draw[draw=black,fill=darkorange25512714,opacity=0.1] (axis cs:0.8,0.8) circle (0.33);
\draw[draw=black,fill=darkorange25512714,opacity=0.1] (axis cs:0.75,1.25) circle (0.33);
\draw[draw=black,fill=darkorange25512714,opacity=0.1] (axis cs:0.7,1.7) circle (0.33);
\draw[draw=black,fill=darkorange25512714,opacity=0.1] (axis cs:1.2,2.2) circle (0.33);
\draw[draw=black,fill=darkorange25512714,opacity=0.1] (axis cs:1.65,2.25) circle (0.33);
\draw[draw=black,fill=darkorange25512714,opacity=0.1] (axis cs:2.2,2.1) circle (0.33);
\draw[draw=black,fill=darkorange25512714,opacity=0.1] (axis cs:2.5,1.5) circle (0.33);
\draw[draw=black,fill=darkorange25512714,opacity=0.1] (axis cs:2.2,0.8) circle (0.33);
\draw[draw=black,fill=darkorange25512714,opacity=0.1] (axis cs:2.5,1.2) circle (0.33);
\draw[draw=black,fill=darkorange25512714,opacity=0.1] (axis cs:2.55,0.45) circle (0.33);
\draw[draw=black,fill=darkorange25512714,opacity=0.1] (axis cs:3.2,0.55) circle (0.33);
\draw[draw=black,fill=darkorange25512714,opacity=0.1] (axis cs:3.5,1) circle (0.33);
\draw[draw=black,fill=darkorange25512714,opacity=0.1] (axis cs:3.1,1.5) circle (0.33);
\addplot [semithick, steelblue31119180, opacity=0.8]
table {%
0.8 0.8
0.75 1.25
};
\addplot [semithick, steelblue31119180, opacity=0.8]
table {%
0.75 1.25
0.7 1.7
};
\addplot [semithick, steelblue31119180, opacity=0.8]
table {%
1.2 2.2
1.65 2.25
};
\addplot [semithick, steelblue31119180, opacity=0.8]
table {%
1.65 2.25
2.2 2.1
};
\addplot [semithick, steelblue31119180, opacity=0.8]
table {%
2.5 1.5
2.5 1.2
};
\addplot [semithick, steelblue31119180, opacity=0.8]
table {%
2.5 1.5
3.1 1.5
};
\addplot [semithick, steelblue31119180, opacity=0.8]
table {%
2.2 0.8
2.5 1.2
};
\addplot [semithick, steelblue31119180, opacity=0.8]
table {%
2.2 0.8
2.55 0.45
};
\addplot [semithick, steelblue31119180, opacity=0.8]
table {%
2.55 0.45
3.2 0.55
};
\addplot [semithick, steelblue31119180, opacity=0.8]
table {%
3.2 0.55
3.5 1
};
\addplot [semithick, steelblue31119180, opacity=0.8]
table {%
3.5 1
3.1 1.5
};
\end{axis}

\end{tikzpicture}
		\caption{$t=0.67$}
		\label{fig:t=0.33}
		\vspace{5mm}
	\end{subfigure}
	\begin{subfigure}[b]{0.3\textwidth}
		\centering
		\begin{tikzpicture}[scale=0.63, every node/.style={scale=1}]
\definecolor{crimson2143940}{RGB}{214,39,40}
\definecolor{darkgray176}{RGB}{176,176,176}
\definecolor{darkorange25512714}{RGB}{255,127,14}
\definecolor{steelblue31119180}{RGB}{31,119,180}
\begin{axis}[
unit vector ratio*=1 1 1,
tick align=outside,
tick pos=left,
x grid style={darkgray176},
xmin=-.3, xmax=4.5,
xtick style={color=black},
y grid style={darkgray176},
ymin=-0.5, ymax=3.2,
ytick style={color=black}
]
\addplot [draw=crimson2143940, fill=crimson2143940, mark=*, only marks]
table{%
x  y
1.5 0.5
0.8 0.8
0.75 1.25
0.7 1.7
1.2 2.2
1.65 2.25
2.2 2.1
2.5 1.5
2.2 0.8
2.5 1.2
2.55 0.45
3.2 0.55
3.5 1
3.1 1.5
};
\draw[draw=black,fill=darkorange25512714,opacity=0.1] (axis cs:1.5,0.5) circle (0.4);
\draw[draw=black,fill=darkorange25512714,opacity=0.1] (axis cs:0.8,0.8) circle (0.4);
\draw[draw=black,fill=darkorange25512714,opacity=0.1] (axis cs:0.75,1.25) circle (0.4);
\draw[draw=black,fill=darkorange25512714,opacity=0.1] (axis cs:0.7,1.7) circle (0.4);
\draw[draw=black,fill=darkorange25512714,opacity=0.1] (axis cs:1.2,2.2) circle (0.4);
\draw[draw=black,fill=darkorange25512714,opacity=0.1] (axis cs:1.65,2.25) circle (0.4);
\draw[draw=black,fill=darkorange25512714,opacity=0.1] (axis cs:2.2,2.1) circle (0.4);
\draw[draw=black,fill=darkorange25512714,opacity=0.1] (axis cs:2.5,1.5) circle (0.4);
\draw[draw=black,fill=darkorange25512714,opacity=0.1] (axis cs:2.2,0.8) circle (0.4);
\draw[draw=black,fill=darkorange25512714,opacity=0.1] (axis cs:2.5,1.2) circle (0.4);
\draw[draw=black,fill=darkorange25512714,opacity=0.1] (axis cs:2.55,0.45) circle (0.4);
\draw[draw=black,fill=darkorange25512714,opacity=0.1] (axis cs:3.2,0.55) circle (0.4);
\draw[draw=black,fill=darkorange25512714,opacity=0.1] (axis cs:3.5,1) circle (0.4);
\draw[draw=black,fill=darkorange25512714,opacity=0.1] (axis cs:3.1,1.5) circle (0.4);
\path [draw=none, fill=steelblue31119180, fill opacity=0.15]
(axis cs:2.5,1.5)
--(axis cs:2.2,0.8)
--(axis cs:2.5,1.2)
--cycle;
\path [draw=none, fill=steelblue31119180, fill opacity=0.15]
(axis cs:2.5,1.5)
--(axis cs:2.5,1.2)
--(axis cs:3.1,1.5)
--cycle;
\path [draw=none, fill=steelblue31119180, fill opacity=0.15]
(axis cs:2.2,0.8)
--(axis cs:2.5,1.2)
--(axis cs:2.55,0.45)
--cycle;
\addplot [semithick, steelblue31119180, opacity=0.8]
table {%
1.5 0.5
0.8 0.8
};
\addplot [semithick, steelblue31119180, opacity=0.8]
table {%
1.5 0.5
2.2 0.8
};
\addplot [semithick, steelblue31119180, opacity=0.8]
table {%
0.8 0.8
0.75 1.25
};
\addplot [semithick, steelblue31119180, opacity=0.8]
table {%
0.75 1.25
0.7 1.7
};
\addplot [semithick, steelblue31119180, opacity=0.8]
table {%
0.7 1.7
1.2 2.2
};
\addplot [semithick, steelblue31119180, opacity=0.8]
table {%
1.2 2.2
1.65 2.25
};
\addplot [semithick, steelblue31119180, opacity=0.8]
table {%
1.65 2.25
2.2 2.1
};
\addplot [semithick, steelblue31119180, opacity=0.8]
table {%
2.2 2.1
2.5 1.5
};
\addplot [semithick, steelblue31119180, opacity=0.8]
table {%
2.5 1.5
2.2 0.8
};
\addplot [semithick, steelblue31119180, opacity=0.8]
table {%
2.5 1.5
2.5 1.2
};
\addplot [semithick, steelblue31119180, opacity=0.8]
table {%
2.5 1.5
3.1 1.5
};
\addplot [semithick, steelblue31119180, opacity=0.8]
table {%
2.2 0.8
2.5 1.2
};
\addplot [semithick, steelblue31119180, opacity=0.8]
table {%
2.2 0.8
2.55 0.45
};
\addplot [semithick, steelblue31119180, opacity=0.8]
table {%
2.5 1.2
2.55 0.45
};
\addplot [semithick, steelblue31119180, opacity=0.8]
table {%
2.5 1.2
3.1 1.5
};
\addplot [semithick, steelblue31119180, opacity=0.8]
table {%
2.55 0.45
3.2 0.55
};
\addplot [semithick, steelblue31119180, opacity=0.8]
table {%
3.2 0.55
3.5 1
};
\addplot [semithick, steelblue31119180, opacity=0.8]
table {%
3.5 1
3.1 1.5
};
\end{axis}
\end{tikzpicture}
		\caption{$t=0.8$}
		\label{fig:t=0.4}
	\end{subfigure}
	\hfill
	\begin{subfigure}[b]{0.3\textwidth}
		\centering
		\begin{tikzpicture}[scale=0.63, every node/.style={scale=1}]
\definecolor{crimson2143940}{RGB}{214,39,40}
\definecolor{darkgray176}{RGB}{176,176,176}
\definecolor{darkorange25512714}{RGB}{255,127,14}
\definecolor{steelblue31119180}{RGB}{31,119,180}
\begin{axis}[
unit vector ratio*=1 1 1,
tick align=outside,
tick pos=left,
x grid style={darkgray176},
xmin=-.3, xmax=4.5,
xtick style={color=black},
y grid style={darkgray176},
ymin=-0.5, ymax=3.2,
ytick style={color=black}
]
\addplot [draw=crimson2143940, fill=crimson2143940, mark=*, only marks]
table{%
x  y
1.5 0.5
0.8 0.8
0.75 1.25
0.7 1.7
1.2 2.2
1.65 2.25
2.2 2.1
2.5 1.5
2.2 0.8
2.5 1.2
2.55 0.45
3.2 0.55
3.5 1
3.1 1.5
};
\draw[draw=black,fill=darkorange25512714,opacity=0.1] (axis cs:1.5,0.5) circle (0.5);
\draw[draw=black,fill=darkorange25512714,opacity=0.1] (axis cs:0.8,0.8) circle (0.5);
\draw[draw=black,fill=darkorange25512714,opacity=0.1] (axis cs:0.75,1.25) circle (0.5);
\draw[draw=black,fill=darkorange25512714,opacity=0.1] (axis cs:0.7,1.7) circle (0.5);
\draw[draw=black,fill=darkorange25512714,opacity=0.1] (axis cs:1.2,2.2) circle (0.5);
\draw[draw=black,fill=darkorange25512714,opacity=0.1] (axis cs:1.65,2.25) circle (0.5);
\draw[draw=black,fill=darkorange25512714,opacity=0.1] (axis cs:2.2,2.1) circle (0.5);
\draw[draw=black,fill=darkorange25512714,opacity=0.1] (axis cs:2.5,1.5) circle (0.5);
\draw[draw=black,fill=darkorange25512714,opacity=0.1] (axis cs:2.2,0.8) circle (0.5);
\draw[draw=black,fill=darkorange25512714,opacity=0.1] (axis cs:2.5,1.2) circle (0.5);
\draw[draw=black,fill=darkorange25512714,opacity=0.1] (axis cs:2.55,0.45) circle (0.5);
\draw[draw=black,fill=darkorange25512714,opacity=0.1] (axis cs:3.2,0.55) circle (0.5);
\draw[draw=black,fill=darkorange25512714,opacity=0.1] (axis cs:3.5,1) circle (0.5);
\draw[draw=black,fill=darkorange25512714,opacity=0.1] (axis cs:3.1,1.5) circle (0.5);
\path [draw=none, fill=steelblue31119180, fill opacity=0.15]
(axis cs:0.8,0.8)
--(axis cs:0.75,1.25)
--(axis cs:0.7,1.7)
--cycle;
\path [draw=none, fill=steelblue31119180, fill opacity=0.15]
(axis cs:2.2,2.1)
--(axis cs:2.5,1.5)
--(axis cs:2.5,1.2)
--cycle;
\path [draw=none, fill=steelblue31119180, fill opacity=0.15]
(axis cs:2.5,1.5)
--(axis cs:2.2,0.8)
--(axis cs:2.5,1.2)
--cycle;
\path [draw=none, fill=steelblue31119180, fill opacity=0.15]
(axis cs:2.5,1.5)
--(axis cs:2.5,1.2)
--(axis cs:3.1,1.5)
--cycle;
\path [draw=none, fill=steelblue31119180, fill opacity=0.15]
(axis cs:2.2,0.8)
--(axis cs:2.5,1.2)
--(axis cs:2.55,0.45)
--cycle;
\path [draw=none, fill=steelblue31119180, fill opacity=0.15]
(axis cs:2.5,1.2)
--(axis cs:2.55,0.45)
--(axis cs:3.2,0.55)
--cycle;
\path [draw=none, fill=steelblue31119180, fill opacity=0.15]
(axis cs:2.5,1.2)
--(axis cs:3.2,0.55)
--(axis cs:3.1,1.5)
--cycle;
\path [draw=none, fill=steelblue31119180, fill opacity=0.15]
(axis cs:3.2,0.55)
--(axis cs:3.5,1)
--(axis cs:3.1,1.5)
--cycle;
\addplot [semithick, steelblue31119180, opacity=0.8]
table {%
1.5 0.5
0.8 0.8
};
\addplot [semithick, steelblue31119180, opacity=0.8]
table {%
1.5 0.5
2.2 0.8
};
\addplot [semithick, steelblue31119180, opacity=0.8]
table {%
0.8 0.8
0.75 1.25
};
\addplot [semithick, steelblue31119180, opacity=0.8]
table {%
0.8 0.8
0.7 1.7
};
\addplot [semithick, steelblue31119180, opacity=0.8]
table {%
0.75 1.25
0.7 1.7
};
\addplot [semithick, steelblue31119180, opacity=0.8]
table {%
0.7 1.7
1.2 2.2
};
\addplot [semithick, steelblue31119180, opacity=0.8]
table {%
1.2 2.2
1.65 2.25
};
\addplot [semithick, steelblue31119180, opacity=0.8]
table {%
1.65 2.25
2.2 2.1
};
\addplot [semithick, steelblue31119180, opacity=0.8]
table {%
2.2 2.1
2.5 1.5
};
\addplot [semithick, steelblue31119180, opacity=0.8]
table {%
2.2 2.1
2.5 1.2
};
\addplot [semithick, steelblue31119180, opacity=0.8]
table {%
2.5 1.5
2.2 0.8
};
\addplot [semithick, steelblue31119180, opacity=0.8]
table {%
2.5 1.5
2.5 1.2
};
\addplot [semithick, steelblue31119180, opacity=0.8]
table {%
2.5 1.5
3.1 1.5
};
\addplot [semithick, steelblue31119180, opacity=0.8]
table {%
2.2 0.8
2.5 1.2
};
\addplot [semithick, steelblue31119180, opacity=0.8]
table {%
2.2 0.8
2.55 0.45
};
\addplot [semithick, steelblue31119180, opacity=0.8]
table {%
2.5 1.2
2.55 0.45
};
\addplot [semithick, steelblue31119180, opacity=0.8]
table {%
2.5 1.2
3.2 0.55
};
\addplot [semithick, steelblue31119180, opacity=0.8]
table {%
2.5 1.2
3.1 1.5
};
\addplot [semithick, steelblue31119180, opacity=0.8]
table {%
2.55 0.45
3.2 0.55
};
\addplot [semithick, steelblue31119180, opacity=0.8]
table {%
3.2 0.55
3.5 1
};
\addplot [semithick, steelblue31119180, opacity=0.8]
table {%
3.2 0.55
3.1 1.5
};
\addplot [semithick, steelblue31119180, opacity=0.8]
table {%
3.5 1
3.1 1.5
};
\end{axis}
\end{tikzpicture}
		\caption{$t=1$}
		\label{fig:t=0.5}
	\end{subfigure}
	\hfill
	\begin{subfigure}[b]{0.3\textwidth}
		\centering
		\input{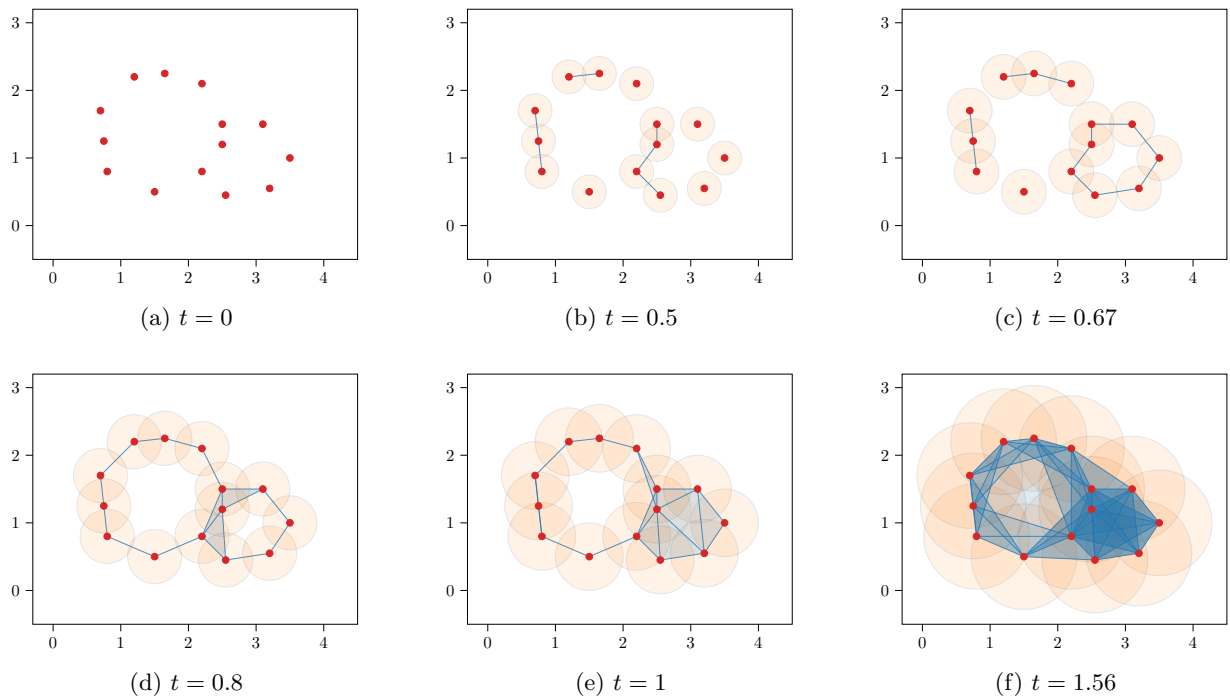}
		\caption{$t=1.56$}
		\label{fig:t=0.78}
	\end{subfigure}
	\caption{Constructing a simplicial complex from a point cloud using the Vietoris--Rips complex. \protect\subref{fig:t=0} The initial point cloud $X$ is obtained for $t=0$ corresponding to the birth of all $0$-dimensional features (connected components). \protect\subref{fig:t=0.25} Some connected components have died due to the creation of $1$-simplices. \protect\subref{fig:t=0.33} A $1$-dimensional hole (loop) has appeared. \protect\subref{fig:t=0.4} The second loop has been created while all but one of the connected components have vanished. \protect\subref{fig:t=0.5} The first loop has been filled yielding its death. \protect\subref{fig:t=0.78} All loops have disappeared; only the connected component of infinite persistence remains.}
	\label{fig:PH_procedure}
\end{figure}

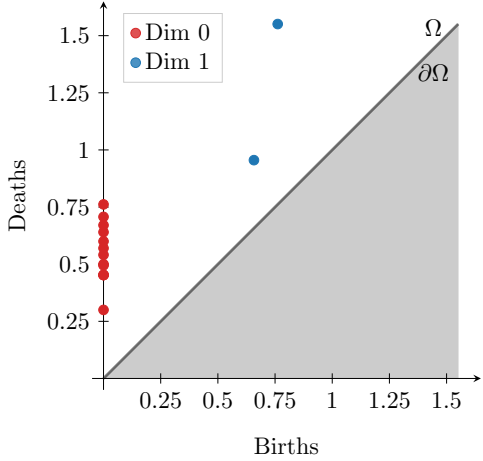
\begin{figure}[t!]
	\centering
        \resizebox{0.4\textwidth}{!}{
	\begin{tikzpicture}[scale=1, every node/.style={scale=1}]
\definecolor{crimson2143940}{RGB}{214,39,40}
\definecolor{darkgray176}{RGB}{176,176,176}
\definecolor{lightgray204}{RGB}{204,204,204}
\definecolor{steelblue31119180}{RGB}{31,119,180}
\begin{axis}[
legend cell align={left},
legend style={
  fill opacity=0.8,
  draw opacity=1,
  text opacity=1,
  at={(0.08,0.97)},
  anchor=north west,
  draw=lightgray204
},
unit vector ratio*=1 1 1,
axis lines=center,
x label style={at={(axis description cs:0.5,-0.1)},anchor=north},
y label style={at={(axis description cs:-0.15,.5)},rotate=90,anchor=south},
x grid style={darkgray176},
xlabel={Births},
xmin=-0.05, xmax=1.65,
xtick style={color=black},
xtick={0.25,0.5,0.75,1,1.25,1.5},
y grid style={darkgray176},
ylabel={Deaths},
ymin=-0.05, ymax=1.65,
ytick style={color=black},
ytick={0.25,0.5,0.75,1,1.25,1.5}
]
\addplot [draw=crimson2143940, fill=crimson2143940, mark=*, only marks]
table{%
x  y
0 0.300000011920929
0 0.452769249677658
0 0.452769249677658
0 0.452769249677658
0 0.494974732398987
0 0.5
0 0.540832698345184
0 0.570087730884552
0 0.600000023841858
0 0.640312433242798
0 0.670820415019989
0 0.70710676908493
0 0.761577308177948
};
\addlegendentry{Dim $0$}
\addplot [draw=steelblue31119180, fill=steelblue31119180, mark=*, only marks]
table{%
x  y
0.761577308177948 1.55080628395081
0.657647311687469 0.955248653888702
};
\addlegendentry{Dim $1$}
\path [draw=black, fill=black, opacity=0.2]
(axis cs:0,0)
--(axis cs:0,0)
--(axis cs:0.01,0.01)
--(axis cs:0.02,0.02)
--(axis cs:0.03,0.03)
--(axis cs:0.04,0.04)
--(axis cs:0.05,0.05)
--(axis cs:0.06,0.06)
--(axis cs:0.07,0.07)
--(axis cs:0.08,0.08)
--(axis cs:0.09,0.09)
--(axis cs:0.1,0.1)
--(axis cs:0.11,0.11)
--(axis cs:0.12,0.12)
--(axis cs:0.13,0.13)
--(axis cs:0.14,0.14)
--(axis cs:0.15,0.15)
--(axis cs:0.16,0.16)
--(axis cs:0.17,0.17)
--(axis cs:0.18,0.18)
--(axis cs:0.19,0.19)
--(axis cs:0.2,0.2)
--(axis cs:0.21,0.21)
--(axis cs:0.22,0.22)
--(axis cs:0.23,0.23)
--(axis cs:0.24,0.24)
--(axis cs:0.25,0.25)
--(axis cs:0.26,0.26)
--(axis cs:0.27,0.27)
--(axis cs:0.28,0.28)
--(axis cs:0.29,0.29)
--(axis cs:0.3,0.3)
--(axis cs:0.31,0.31)
--(axis cs:0.32,0.32)
--(axis cs:0.33,0.33)
--(axis cs:0.34,0.34)
--(axis cs:0.35,0.35)
--(axis cs:0.36,0.36)
--(axis cs:0.37,0.37)
--(axis cs:0.38,0.38)
--(axis cs:0.39,0.39)
--(axis cs:0.4,0.4)
--(axis cs:0.41,0.41)
--(axis cs:0.42,0.42)
--(axis cs:0.43,0.43)
--(axis cs:0.44,0.44)
--(axis cs:0.45,0.45)
--(axis cs:0.46,0.46)
--(axis cs:0.47,0.47)
--(axis cs:0.48,0.48)
--(axis cs:0.49,0.49)
--(axis cs:0.5,0.5)
--(axis cs:0.51,0.51)
--(axis cs:0.52,0.52)
--(axis cs:0.53,0.53)
--(axis cs:0.54,0.54)
--(axis cs:0.55,0.55)
--(axis cs:0.56,0.56)
--(axis cs:0.57,0.57)
--(axis cs:0.58,0.58)
--(axis cs:0.59,0.59)
--(axis cs:0.6,0.6)
--(axis cs:0.61,0.61)
--(axis cs:0.62,0.62)
--(axis cs:0.63,0.63)
--(axis cs:0.64,0.64)
--(axis cs:0.65,0.65)
--(axis cs:0.66,0.66)
--(axis cs:0.67,0.67)
--(axis cs:0.68,0.68)
--(axis cs:0.69,0.69)
--(axis cs:0.7,0.7)
--(axis cs:0.71,0.71)
--(axis cs:0.72,0.72)
--(axis cs:0.73,0.73)
--(axis cs:0.74,0.74)
--(axis cs:0.75,0.75)
--(axis cs:0.76,0.76)
--(axis cs:0.77,0.77)
--(axis cs:0.78,0.78)
--(axis cs:0.79,0.79)
--(axis cs:0.8,0.8)
--(axis cs:0.81,0.81)
--(axis cs:0.82,0.82)
--(axis cs:0.83,0.83)
--(axis cs:0.84,0.84)
--(axis cs:0.85,0.85)
--(axis cs:0.86,0.86)
--(axis cs:0.87,0.87)
--(axis cs:0.88,0.88)
--(axis cs:0.89,0.89)
--(axis cs:0.9,0.9)
--(axis cs:0.91,0.91)
--(axis cs:0.92,0.92)
--(axis cs:0.93,0.93)
--(axis cs:0.94,0.94)
--(axis cs:0.95,0.95)
--(axis cs:0.96,0.96)
--(axis cs:0.97,0.97)
--(axis cs:0.98,0.98)
--(axis cs:0.99,0.99)
--(axis cs:1,1)
--(axis cs:1.01,1.01)
--(axis cs:1.02,1.02)
--(axis cs:1.03,1.03)
--(axis cs:1.04,1.04)
--(axis cs:1.05,1.05)
--(axis cs:1.06,1.06)
--(axis cs:1.07,1.07)
--(axis cs:1.08,1.08)
--(axis cs:1.09,1.09)
--(axis cs:1.1,1.1)
--(axis cs:1.11,1.11)
--(axis cs:1.12,1.12)
--(axis cs:1.13,1.13)
--(axis cs:1.14,1.14)
--(axis cs:1.15,1.15)
--(axis cs:1.16,1.16)
--(axis cs:1.17,1.17)
--(axis cs:1.18,1.18)
--(axis cs:1.19,1.19)
--(axis cs:1.2,1.2)
--(axis cs:1.21,1.21)
--(axis cs:1.22,1.22)
--(axis cs:1.23,1.23)
--(axis cs:1.24,1.24)
--(axis cs:1.25,1.25)
--(axis cs:1.26,1.26)
--(axis cs:1.27,1.27)
--(axis cs:1.28,1.28)
--(axis cs:1.29,1.29)
--(axis cs:1.3,1.3)
--(axis cs:1.31,1.31)
--(axis cs:1.32,1.32)
--(axis cs:1.33,1.33)
--(axis cs:1.34,1.34)
--(axis cs:1.35,1.35)
--(axis cs:1.36,1.36)
--(axis cs:1.37,1.37)
--(axis cs:1.38,1.38)
--(axis cs:1.39,1.39)
--(axis cs:1.4,1.4)
--(axis cs:1.41,1.41)
--(axis cs:1.42,1.42)
--(axis cs:1.43,1.43)
--(axis cs:1.44,1.44)
--(axis cs:1.45,1.45)
--(axis cs:1.46,1.46)
--(axis cs:1.47,1.47)
--(axis cs:1.48,1.48)
--(axis cs:1.49,1.49)
--(axis cs:1.5,1.5)
--(axis cs:1.51,1.51)
--(axis cs:1.52,1.52)
--(axis cs:1.53,1.53)
--(axis cs:1.54,1.54)
--(axis cs:1.55,1.55)
--(axis cs:1.55,0)
--(axis cs:1.55,0)
--(axis cs:1.54,0)
--(axis cs:1.53,0)
--(axis cs:1.52,0)
--(axis cs:1.51,0)
--(axis cs:1.5,0)
--(axis cs:1.49,0)
--(axis cs:1.48,0)
--(axis cs:1.47,0)
--(axis cs:1.46,0)
--(axis cs:1.45,0)
--(axis cs:1.44,0)
--(axis cs:1.43,0)
--(axis cs:1.42,0)
--(axis cs:1.41,0)
--(axis cs:1.4,0)
--(axis cs:1.39,0)
--(axis cs:1.38,0)
--(axis cs:1.37,0)
--(axis cs:1.36,0)
--(axis cs:1.35,0)
--(axis cs:1.34,0)
--(axis cs:1.33,0)
--(axis cs:1.32,0)
--(axis cs:1.31,0)
--(axis cs:1.3,0)
--(axis cs:1.29,0)
--(axis cs:1.28,0)
--(axis cs:1.27,0)
--(axis cs:1.26,0)
--(axis cs:1.25,0)
--(axis cs:1.24,0)
--(axis cs:1.23,0)
--(axis cs:1.22,0)
--(axis cs:1.21,0)
--(axis cs:1.2,0)
--(axis cs:1.19,0)
--(axis cs:1.18,0)
--(axis cs:1.17,0)
--(axis cs:1.16,0)
--(axis cs:1.15,0)
--(axis cs:1.14,0)
--(axis cs:1.13,0)
--(axis cs:1.12,0)
--(axis cs:1.11,0)
--(axis cs:1.1,0)
--(axis cs:1.09,0)
--(axis cs:1.08,0)
--(axis cs:1.07,0)
--(axis cs:1.06,0)
--(axis cs:1.05,0)
--(axis cs:1.04,0)
--(axis cs:1.03,0)
--(axis cs:1.02,0)
--(axis cs:1.01,0)
--(axis cs:1,0)
--(axis cs:0.99,0)
--(axis cs:0.98,0)
--(axis cs:0.97,0)
--(axis cs:0.96,0)
--(axis cs:0.95,0)
--(axis cs:0.94,0)
--(axis cs:0.93,0)
--(axis cs:0.92,0)
--(axis cs:0.91,0)
--(axis cs:0.9,0)
--(axis cs:0.89,0)
--(axis cs:0.88,0)
--(axis cs:0.87,0)
--(axis cs:0.86,0)
--(axis cs:0.85,0)
--(axis cs:0.84,0)
--(axis cs:0.83,0)
--(axis cs:0.82,0)
--(axis cs:0.81,0)
--(axis cs:0.8,0)
--(axis cs:0.79,0)
--(axis cs:0.78,0)
--(axis cs:0.77,0)
--(axis cs:0.76,0)
--(axis cs:0.75,0)
--(axis cs:0.74,0)
--(axis cs:0.73,0)
--(axis cs:0.72,0)
--(axis cs:0.71,0)
--(axis cs:0.7,0)
--(axis cs:0.69,0)
--(axis cs:0.68,0)
--(axis cs:0.67,0)
--(axis cs:0.66,0)
--(axis cs:0.65,0)
--(axis cs:0.64,0)
--(axis cs:0.63,0)
--(axis cs:0.62,0)
--(axis cs:0.61,0)
--(axis cs:0.6,0)
--(axis cs:0.59,0)
--(axis cs:0.58,0)
--(axis cs:0.57,0)
--(axis cs:0.56,0)
--(axis cs:0.55,0)
--(axis cs:0.54,0)
--(axis cs:0.53,0)
--(axis cs:0.52,0)
--(axis cs:0.51,0)
--(axis cs:0.5,0)
--(axis cs:0.49,0)
--(axis cs:0.48,0)
--(axis cs:0.47,0)
--(axis cs:0.46,0)
--(axis cs:0.45,0)
--(axis cs:0.44,0)
--(axis cs:0.43,0)
--(axis cs:0.42,0)
--(axis cs:0.41,0)
--(axis cs:0.4,0)
--(axis cs:0.39,0)
--(axis cs:0.38,0)
--(axis cs:0.37,0)
--(axis cs:0.36,0)
--(axis cs:0.35,0)
--(axis cs:0.34,0)
--(axis cs:0.33,0)
--(axis cs:0.32,0)
--(axis cs:0.31,0)
--(axis cs:0.3,0)
--(axis cs:0.29,0)
--(axis cs:0.28,0)
--(axis cs:0.27,0)
--(axis cs:0.26,0)
--(axis cs:0.25,0)
--(axis cs:0.24,0)
--(axis cs:0.23,0)
--(axis cs:0.22,0)
--(axis cs:0.21,0)
--(axis cs:0.2,0)
--(axis cs:0.19,0)
--(axis cs:0.18,0)
--(axis cs:0.17,0)
--(axis cs:0.16,0)
--(axis cs:0.15,0)
--(axis cs:0.14,0)
--(axis cs:0.13,0)
--(axis cs:0.12,0)
--(axis cs:0.11,0)
--(axis cs:0.1,0)
--(axis cs:0.09,0)
--(axis cs:0.08,0)
--(axis cs:0.07,0)
--(axis cs:0.06,0)
--(axis cs:0.05,0)
--(axis cs:0.04,0)
--(axis cs:0.03,0)
--(axis cs:0.02,0)
--(axis cs:0.01,0)
--(axis cs:0,0)
--cycle;
\addplot [very thick, black, opacity=0.5, forget plot]
table {%
0 0
1.55080628395081 1.55080628395081
};
\end{axis}
\node at (5,5.3) {$\Omega$};
\node at (5,4.65) {$\partial\Omega$};

\end{tikzpicture}}
	\caption{Persistence diagram obtained from the Vietoris--Rips filtration of the point cloud $X$ shown in \cref{fig:PH_procedure} illustrating the birth and death times of the $0$- and $1$-dimensional holes. Here, the $0$-dimensional feature of infinite persistence is discarded.}
	\label{fig:PD}
\end{figure}

\begin{definition}[Persistence diagram]
A \emph{persistence diagram} is a locally finite multiset of points supported on $\Omega \coloneqq \{(t_1,t_2)\in \mathbb R^2 \colon t_1 < t_2\}$ together with points on the diagonal ${\partial\Omega \coloneqq \{(t,t) \in \mathbb{R}^2\}}$ counted with infinite multiplicity.
\end{definition}

\paragraph{Distances Between Persistence Diagrams.}  The collection of all persistence diagrams constitutes a metric space when equipped with an appropriate distance function.  There exist various metrics on the space of persistence diagrams; we focus on the following two distance functions that are widely used in computations and applications in TDA.

\begin{definition}
Let $D$ and $D'$ be two persistence diagrams, and let $1\leq q \leq \infty$. For ${1\leq p<\infty}$, the $p$-\emph{Wasserstein distance} between $D$ and $D'$ is given by
\begin{align}\label{eq:WD_p}
    W_{p,q}(D,D') \coloneqq \inf_{\gamma} \left(\sum_{x\in D\cup \partial \Omega} \|x-\gamma(x)\|_q^p\right)^{1/p},
\end{align}
where the infimum is taken over all bijections $\gamma$ between $D\cup \partial \Omega$ and $D'\cup \partial \Omega$. For $p=\infty$, the \emph{bottleneck distance} is given by
\begin{align}\label{eq:Bottleneck}
    W_{\infty,q}(D,D') \coloneqq \inf_{\gamma} \sup_{x\in D \cup \partial \Omega} \|x-\gamma(x)\|_q.
\end{align}
\end{definition}

Whenever $p=q$, we write $W_{p}(\cdot,\cdot)\coloneqq W_{p,p}(\cdot,\cdot)$ for notational simplicity.
A bijection $\gamma$ that achieves \cref{eq:WD_p} (or \cref{eq:Bottleneck}) is called an \emph{optimal matching}.
The $p$-\emph{total persistence} of a persistence diagram $D$ is given by 
\begin{align*}
    \mathrm{Pers}_p(D)\coloneqq \left(\sum_{x\in D} \left\lVert x-x^\perp\right\rVert_q^p\right)^{1/p}, \quad 1\leq p<\infty,
\end{align*}
where $x^\perp$ denotes the orthogonal projection of $x$ onto $\partial \Omega$.
For $p=\infty$, set
\begin{align*}
    \mathrm{Pers}_\infty(D)\coloneqq \sup_{x\in D} \left\lVert x-x^\perp\right\rVert_q.
\end{align*}
Denote by $\mathcal{D}^p$ the set of persistence diagrams $D$ with finite $p$-total persistence, $\mathrm{Pers}_p(D)<\infty$.

The space of persistence diagrams, as random objects, is a setting where probabilistic and statistical studies are valid: the metric space $(\mathcal{D}^p,W_{p,q})$ is complete and separable for any $1\leq p<\infty$ and $1\leq q\leq \infty$ \citep{mileyko2011probability}.
As a result, probability measures on $(\mathcal{D}^p,W_{p,q})$ are well-defined, which implies the existence of other standard statistical and probabilistic objects on $(\mathcal{D}^p,W_{p,q})$ such as expectations and variances. However, geometrically, the metric space of persistence diagrams when $p=2$ is highly nonlinear (and as yet geometrically uncharacterized for $p>2$), which makes even simple statistical and machine learning tasks with persistence diagrams very challenging \citep{turner-2014-frechet}. To deal with this issue, persistence diagrams are typically embedded into a Banach or Hilbert space via vectorizations, such as explicit feature maps \citep{bubenik2015statistical, adams2017persistence_vectorisation} or implicit kernel methods \citep{Kernel_PD_reininghaus2015stable}. Note, however, that the metric structure of the space of persistence diagrams is not preserved by these embeddings \citep{bubenik2020embeddings}.

\paragraph{Persistence Measures.}
Persistence diagrams generalize well to a measure-theoretic setting, which has desirable probabilistic and statistical properties \citep{Chazal_Divol_EPD}. 
Since persistence diagrams are simply multisets of points supported on $\Omega$, it is natural to represent a persistence diagram as a discrete measure in the following manner. Let $D\in \mathcal{D}^p$, then 
\begin{align}\label{eq:discrete_measure}
    D = \sum_{x} \delta_x,
\end{align}
where $\delta_x$ denotes the Dirac measure in $x\in \Omega$, and where the sum is taken over all points $x=(t_1,t_2)$ that belong to the persistence diagram $D$. Points in $D$ corresponding to topological features of infinite persistence are disregarded in the representation \cref{eq:discrete_measure}. 

Typically, the underlying point cloud $X$ of a persistence diagram $D$ is random so that $D$ becomes a random measure. To analyze the expected behavior of any linear descriptor of the form $\Psi_h(D) \coloneqq \int_{\Omega}h\,\mathrm{d}D$, for some Banach space-valued function $h$ on $\Omega$, it suffices to study the \emph{expected persistence diagram}.

\begin{definition}[Expected persistence diagram, \citep{Chazal_Divol_EPD}]\label{def:EPD}
Let $D$ be a persistence diagram resulting from a random point cloud. The \emph{expected persistence diagram (EPD)} is defined to be the deterministic measure
\begin{align*}
    \mathbb E [D](A) \coloneqq \mathbb E [D(A)], \quad \text{for all Borel sets $A\subset \Omega$.}
\end{align*}
\end{definition}
\cite{Chazal_Divol_EPD} show that the expected persistence diagram has a density with respect to the Lebesgue measure. 
The expected persistence diagram is in general not a persistence diagram, but lies in a natural extension of the space of persistence diagrams, namely, the space of \emph{persistence measures} \citep{divol_und_top}.
The space of persistence measures $\mathcal{M}^p$, $1\leq p\leq \infty$, is defined to be the set of all non-negative Radon measures $\mu$ supported on $\Omega$ such that $\mathrm{Pers}_p(\mu)<\infty$. Here,
\begin{align*}
    \mathrm{Pers}_p(\mu) \coloneqq \begin{cases}
    \int_{\Omega}\left\lVert x-x^\perp\right\rVert_q^p\,\mathrm{d}\mu(x) &\text{ if } 1\leq p<\infty,\\
    \sup_{x\in\mathrm{supp}(\mu)} \left\lVert x-x^\perp\right\rVert_q &\text{ if } p=\infty,
    \end{cases}
    \quad \text{for $1\leq q\leq \infty.$}
\end{align*}
Equip the space of persistence measures $\mathcal{M}^p$ with the following metric \citep{divol_und_top}: for $\mu,\nu\in\mathcal{M}^p$, define
\begin{align}\label{eq:OT_p}
    \mathrm{OT}_{p,q}(\mu,\nu) \coloneqq \inf_{\gamma \in \mathrm{Adm}(\mu,\nu)} \left(\int_{\Bar{\Omega}\times \Bar{\Omega}} \| x-y \|_q^p\,\mathrm{d}\gamma(x,y)\right)^{1/p}, \quad 1\leq p <\infty,
\end{align}
where $\Bar{\Omega} \coloneqq \Omega \cup \partial \Omega$, and where $\mathrm{Adm}(\mu,\nu)$ denotes the set of Radon measures $\gamma$ on $\Bar{\Omega}\times \Bar{\Omega}$ such that
\begin{align*}
    \gamma(A\times \Bar{\Omega}) = \mu(A) \text{ and } \gamma(\Bar{\Omega}\times B) = \nu(B), \text{ for all Borel sets $A,B\subset \Omega$}.
\end{align*}
For $p=\infty$, 
\begin{align*}
    \mathrm{OT}_{\infty,q}(\mu,\nu) \coloneqq \inf_{\gamma \in \mathrm{Adm}(\mu,\nu)} \sup_{(x,y)\in \mathrm{supp}(\gamma)} \| x-y \|_q.
\end{align*}
Whenever $p=q$, we will use the simplified notation $\mathrm{OT}_{p}(\cdot,\cdot)\coloneqq \mathrm{OT}_{p,p}(\cdot,\cdot)$.
We now recall the following results established by \cite{divol_und_top}: The space $(\mathcal{M}^p,\mathrm{OT}_{p,q})$ is complete and separable for $1\leq p<\infty$ and $1\leq q\leq \infty$. The space $(\mathcal{M}^\infty,\mathrm{OT}_{\infty,q})$ is complete but not separable for any $1\leq q\leq \infty$. Note that
$\mathcal{D}^p$ is closed in $\mathcal{M}^p$ with respect to $\mathrm{OT}_{p,q}$. Moreover, for any $\mu,\nu\in\mathcal{D}^p$, it holds that
\begin{align*}
    W_{p,q}(\mu,\nu) = \mathrm{OT}_{p,q}(\mu,\nu).
\end{align*}
Thus, $(\mathcal{M}^p,\mathrm{OT}_{p,q})$ is a natural and proper extension of the space of persistence diagrams $(\mathcal{D}^p,W_{p,q})$. Due to its linear structure the space of persistence measures $\mathcal{M}^p$ is very convenient for both computation and statistical analyses.
\subsection{Wavelets}
Switching perspectives, we now recall some basic elements of wavelet theory.

A wavelet expansion of a function $f\in L^2 (\mathbb R^2)$ corresponds to a multiscale decomposition of $f$, which is obtained by writing $f$ as a sum of its coarse approximation and its local fluctuations. More precisely, consider the closed linear subspace $\overline{\mathrm{span}(\mathcal{V}_j)}$ of $L^2(\mathbb R^2)$, which is generated by some function $\varphi:\mathbb R^2 \to \mathbb R$ according to 
\begin{align*}
    \mathcal{V}_j \coloneqq \left\{2^j\varphi(2^j \cdot +\ell) \colon \ell \in \mathbb Z^2\right\}, \quad j\in \mathbb Z.
\end{align*}
For an appropriate choice of $\varphi$, there exists a nested sequence of closed linear subspaces
\begin{align}\label{eq:subspaces}
    \cdots \subset \overline{\mathrm{span}(\mathcal{V}_{-1})} \subset \overline{\mathrm{span}(\mathcal{V}_{0})} \subset \overline{\mathrm{span}(\mathcal{V}_{1})}\subset \cdots
\end{align}
such that
\begin{enumerate}[label=(\roman*)]
    \item $\cap_{j\in \mathbb Z}\,  \overline{\mathrm{span}(\mathcal{V}_j)}=\{0\}$, 
    \item $\cup_{j\in \mathbb Z}\, \overline{\mathrm{span}(\mathcal{V}_j)}$ is dense in $L^2(\mathbb R^2)$,
    \item $\mathcal{V}_0$ is an orthonormal basis of $\overline{\mathrm{span}(\mathcal{V}_{0})}$, and
    \item for all $h\in L^2(\mathbb R^2)$ and all $j\in \mathbb Z$, $h\in \overline{\mathrm{span}(\mathcal{V}_{j})} \iff h(2\cdot) \in \overline{\mathrm{span}(\mathcal{V}_{j+1})}$.
\end{enumerate}
If these conditions are satisfied, $\{\overline{\mathrm{span}(\mathcal{V}_{j})}\}_{j\in \mathbb Z}$ is called a \emph{multiresolution approximation} of $L^2(\mathbb R^2)$ and $\varphi$ its \emph{scaling function} \citep{cohen2003numerical}.
Thus, projecting $f\in L^2(\mathbb R^2)$ onto the subspace $\overline{\mathrm{span}(\mathcal{V}_j)}$ gives a coarse approximation of $f$ if $j$ is small and a fine approximation if $j$ is large. The details (fluctuations) complementing the approximation between $\overline{\mathrm{span}(\mathcal{V}_j)}$ and $\overline{\mathrm{span}(\mathcal{V}_{j+1})}$ may be governed by the set
\begin{align*}
    \mathcal{W}_{j}\coloneqq \left\{2^j\psi^a(2^j\cdot +\ell),\, 2^j\psi^b(2^j\cdot +\ell),\, 2^j\psi^c(2^j\cdot+ \ell) \colon \ell\in \mathbb Z^2\right\}
\end{align*}
for some functions $\psi^a,\psi ^b,\psi^c \colon \mathbb R^2 \to \mathbb R$ according to
\begin{align}\label{eq:wavelet_mult}
    \overline{\mathrm{span}(\mathcal{V}_{j+1})} = \overline{\mathrm{span}(\mathcal{V}_j)} \oplus \overline{\mathrm{span}(\mathcal{W}_j)}.
\end{align}
Applying \cref{eq:wavelet_mult} recursively, using \cref{eq:subspaces}, and the fact that $\cup_{j\in \mathbb Z}\, \overline{\mathrm{span}(\mathcal{V}_j)}$ is dense in $L^2(\mathbb R^2)$, 
\begin{align*}
    L^2(\mathbb R^2) = \overline{\mathrm{span}(\mathcal{V}_{j_0})} \oplus \bigoplus_{j\geq j_0} \overline{\mathrm{span}(\mathcal{W}_{j})},
\end{align*}
for every $j_0 \in \mathbb N_0$.  Thus, the wavelet expansion of $f\in L^2 (\mathbb R^2)$ reads
\begin{align*}
    f = \sum_{\phi \in \mathcal{V}_{j_0}} \alpha_\phi \phi + \sum_{j\geq j_0} \sum_{\psi \in \mathcal{W}_j} \beta_\psi \psi, 
\end{align*}
where $\alpha_\phi, \beta_\psi \in \mathbb R$ are the wavelet coefficients of this expansion. This decomposition allows for a multiresolution representation of the approximated function because it is equivalent to a description at different scales, where at each scale there exists both a coarse- and fine-grained approximation of the function. Indeed, the first term is a (coarse) approximation of $f$ at scale $j_0$; the second term describes the fine details (fluctuations) \citep{cohen2003numerical,hardle2012wavelets}. 

\paragraph{Approximation with Wavelets.}  
Wavelet transforms are a popular choice for modeling signals and images and for nonparametric density estimation \citep{abramovich2000,hardle2012wavelets}, thanks to their ability to approximate large classes of functions and to provide a locally accurate representation of the data structure.  Wavelets are well known to achieve considerable approximation power in connection to a broad class of functions. Their localized structure (both in space and frequency) allows them to behave smoothly with local irregularities and adapt efficiently to abrupt and small-scale variations in the data. As such, they have been employed for various tasks of filtering, smoothing, and data compression \citep{zhao2005,krommweh2010,fryzlewicz2016}. The use of wavelets in density estimation has a long history dating back to the 1990s \citep{cohen2003numerical,hardle2012wavelets,Donoho_wavelet_est} and wavelet-based density estimators have been successfully applied to a wide range of observational data \citep{abramovich2000,peter2008,hudson2013}. These well-documented advantages motivate the study of wavelet estimators for the expected persistence diagram.

\section{Minimax Density Estimation of the Expected Persistence Diagram}
\label{sec:minimax}
We now present our main results, which are twofold: In our first contribution, we estimate the density of the expected persistence diagram nonparametrically using Haar wavelets and establish its property of minimaxity.  In our second contribution, we propose a sparse thresholding wavelet estimator and show its near-minimaxity.

\subsection{Framework}
Consider a probability distribution $P$ supported on $\mathcal{M}^p$, and denote the expected persistence diagram associated to $P$ by 
\begin{align}\label{eq:epd_prob_dist}
    \mathbf{E}(P)\coloneqq \mathbb E[\mu],
\end{align}
where $\mu$ is a random variable taking values in the set $\mathcal{M}^p$ distributed according to the law $P$. Note that, in this context, it is natural to interpret the expected persistence diagram as the Bochner integral of $\mu$.  

Our goal is to estimate the Lebesgue density of $\mathbf{E}(P)$ based on $N$ independent and identically distributed (i.i.d.)~observations $\{\mu_i\}_{i=1}^N \sim P$. Moreover, we aim to find an estimator which is optimal in the sense of minimax, which we will now define.  Here,  we use the framework introduced by \cite{divol2021estimation}. 

\begin{definition}
\label{def:minimax}
For $R>0$, let
\begin{align*}
   \Omega_R \coloneqq \left\{(t_1,t_2)\in \mathbb R^2 \colon \left\lvert t_1 + R/\sqrt{8}\right\rvert+\left\lvert t_2 - R/\sqrt{8}\right\rvert \leq R/\sqrt{2}\right\}. 
\end{align*}
Denote by $\mathcal{M}^s_{R,M}$ the set of persistence measures $\mu \in \mathcal{M}^s$ that are supported on $\Omega_R$ and satisfy 
$\mathrm{Pers}_s(\mu)\leq M$, where $s\in[0,\infty)$ and $M>0$. Let $\mathcal{P}^s_{R,M}$ be the set of probability distributions on $\mathcal{M}^s_{R,M}$. The \emph{minimax rate} for estimating $\mathbf{E}(P)$ on $\mathcal{P}^s_{R,M}$ is defined to be
\begin{align*}
    \inf_{\hat{\mu}_N} \sup_{P\in\mathcal{P}^s_{R,M}} \mathbb E \left[\mathrm{OT}_p^p\left(\hat{\mu}_N,\mathbf{E}(P)\right)\right],
\end{align*}
where the infimum is taken over all estimators $\hat{\mu}_N$ based on the $N$ i.i.d. samples $\{\mu_i\}_{i=1}^N \sim P$. 
\end{definition}

\begin{theorem}[Lower bound on minimax rate, Theorem 2 in \citep{divol2021estimation}]
Let $1\leq p < \infty$, $s\geq0$, and $M,R>0$. Then there exists a constant $c_{p,s}>0$ depending on $p$ and $s$ such that
\begin{align*}
    \inf_{\hat{\mu}_N} \sup_{P\in\mathcal{P}^s_{R,M}} \mathbb E \left[\mathrm{OT}_p^p\left(\hat{\mu}_N,\mathbf{E}(P)\right)\right] \geq \frac{c_{p,s}MR^{p-s}}{\sqrt{N}}.
\end{align*}
Here, the infimum ranges over all possible estimators $\hat{\mu}_N$ that can be computed from the i.i.d.~$N$-sample $\{\mu_i\}_{i=1}^N \sim P$.
\end{theorem}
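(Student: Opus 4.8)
The plan is to establish the lower bound by Le Cam's two-point method, reducing the estimation problem to a binary hypothesis test between two carefully chosen distributions $P_0,P_1\in\mathcal P^s_{R,M}$ whose expected persistence diagrams are separated in $\mathrm{OT}_p$ by the target rate, yet which are statistically indistinguishable on the basis of $N$ samples. Since the loss $\mathrm{OT}_p^p$ is the $p$-th power of a genuine metric, I would first record the relaxed triangle inequality $\mathrm{OT}_p^p(\mu,\nu)\leq 2^{p-1}\big(\mathrm{OT}_p^p(\mu,\rho)+\mathrm{OT}_p^p(\rho,\nu)\big)$, from which the standard two-point reduction yields, for any estimator $\hat\mu_N$ and any pair $P_0,P_1\in\mathcal P^s_{R,M}$,
\begin{align*}
\sup_{P\in\mathcal P^s_{R,M}}\mathbb E\big[\mathrm{OT}_p^p(\hat\mu_N,\mathbf E(P))\big]\;\geq\;\frac{\mathrm{OT}_p^p(\mathbf E(P_0),\mathbf E(P_1))}{2^{p+1}}\big(1-\mathrm{TV}(P_0^{\otimes N},P_1^{\otimes N})\big).
\end{align*}

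For the construction I would fix the point $x^\ast\in\Omega_R$ of maximal persistence (the vertex farthest from $\partial\Omega$), so that $\|x^\ast-(x^\ast)^\perp\|_q=c_qR$ for an explicit $q$-dependent constant $c_q$. Consider laws supported on the two measures $\{\mathbf 0,\,w\delta_{x^\ast}\}$ with weight $w=M/(c_qR)^s$, chosen so that each realization satisfies $\mathrm{Pers}_s(w\delta_{x^\ast})=w\,(c_qR)^s=M$ and is supported on $\Omega_R$; hence both laws lie in $\mathcal P^s_{R,M}$. Define $P_0,P_1$ as Bernoulli mixtures placing mass on $w\delta_{x^\ast}$ with probability $\tfrac12+c/\sqrt N$ and $\tfrac12-c/\sqrt N$ respectively (and on $\mathbf 0$ otherwise), for a small absolute constant $c$.

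Next I would compute the two quantities entering the reduction. The expected persistence diagrams are single atoms at a common location with differing masses, $\mathbf E(P_j)=m_j\delta_{x^\ast}$, so the optimal partial-transport plan matches the shared mass at zero cost and sends the surplus $|m_0-m_1|=(2c/\sqrt N)\,w$ to the diagonal; this gives $\mathrm{OT}_p^p(\mathbf E(P_0),\mathbf E(P_1))=|m_0-m_1|\,(c_qR)^p=2c\,w\,(c_qR)^p/\sqrt N$, which equals a constant multiple of $MR^{p-s}/\sqrt N$ after substituting $w$. For the indistinguishability, tensorization gives $\mathrm{KL}(P_0^{\otimes N}\|P_1^{\otimes N})=N\,\mathrm{KL}\big(\mathrm{Ber}(\tfrac12+\tfrac c{\sqrt N})\,\|\,\mathrm{Ber}(\tfrac12-\tfrac c{\sqrt N})\big)=8c^2+o(1)$, bounded independently of $N$; by Bretagnolle--Huber, $\mathrm{TV}(P_0^{\otimes N},P_1^{\otimes N})\leq\sqrt{1-e^{-8c^2}}$, which is at most $\tfrac12$ for $c$ small enough. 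Substituting both estimates into the reduction and absorbing all $p,q,s$-dependent factors into a single constant $c_{p,s}$ completes the argument.

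The main obstacle is not the probabilistic machinery, which is classical, but verifying the partial-transport computation of $\mathrm{OT}_p^p$ between the two expected diagrams and ensuring the construction is simultaneously admissible and sharp: the weight $w$ must saturate the total-persistence budget $M$ while the atom sits at maximal persistence of order $R$, so that the surplus mass transported to $\partial\Omega$ realizes the full $R^{p-s}$ scaling. One must also confirm that no plan beats the ``match-and-project'' plan, so that the claimed value is genuinely the infimum, and track the $\|\cdot\|_q$ normalization $c_q$ consistently through both $\mathrm{Pers}_s$ and the transport cost, since an error there would corrupt the exponent of $R$.
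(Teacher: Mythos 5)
This theorem is imported verbatim from \cite{divol2021estimation} and the paper supplies no proof of it, so there is no internal argument to compare against; your Le Cam two-point reduction is, however, essentially the argument used in the cited source, and it is correct. The details you flag all check out: both mixture laws lie in $\mathcal{P}^s_{R,M}$ because $\mathrm{Pers}_s(w\delta_{x^\ast})=w\,\|x^\ast-(x^\ast)^\perp\|_q^s=M$; the separation $\mathrm{OT}_p^p(\mathbf{E}(P_0),\mathbf{E}(P_1))=\lvert m_0-m_1\rvert\,\|x^\ast-(x^\ast)^\perp\|_q^p\asymp MR^{p-s}/\sqrt{N}$ is genuinely the infimum, since any admissible coupling must send the surplus mass at the single off-diagonal atom to $\partial\Omega$ at cost at least $\mathrm{dist}(x^\ast,\partial\Omega)^p$ per unit (mass cannot be routed to any other point of $\Omega$ because the second marginal vanishes there); and the tensorized KL is bounded by $8c^2/(1-2c)$ uniformly in $N$, so Bretagnolle--Huber keeps the total variation below $1/2$ for $c$ small.
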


\begin{remark}[Regularity of density]\label{rem:reg_minimax_rate}
As noted by \cite{Chazal_Divol_EPD}, the density of the expected persistence diagram is typically smooth. Hence, a natural question to ask is whether the regularity of the density can be leveraged to obtain a faster minimax rate. The answer to this question is negative: in \cite[Theorem 3]{divol2021estimation} it is established that regardless of the assumed regularity on the density, the minimax rate cannot be faster than $1/\sqrt{N}$.
\end{remark}

Given the observed $N$-sample $\{\mu_i\}_{i=1}^N$, a very natural and simple estimator of $\mathbf{E}(P)$ is the \emph{empirical mean} $\Bar{\mu}_N$ defined as
\begin{align}\label{eq:emp_mean_EPD}
    \Bar{\mu}_N \coloneqq \frac{1}{N} \sum_{i=1}^N \mu_i,
\end{align}
which can be computed efficiently. Moreover, the following result states that $\Bar{\mu}_N$ achieves the minimax rate.
\begin{theorem}[Convergence rate empirical mean, Theorem 1 in \citep{divol2021estimation}]\label{thm:emp_mean_minimax}
Let $1\leq p <\infty$ and $0\leq s<p$. Given $N$ i.i.d.~samples $\{\mu_i\}_{i=1}^N \sim P$, compute $\Bar\mu_N$ according to \cref{eq:emp_mean_EPD}. Then 
\begin{align*}
    \sup_{P\in \mathcal{P}^s_{R,M}} \mathbb E \left[\mathrm{OT}_p^p(\Bar{\mu}_N,\mathbf{E}(P))\right] \leq
    c_{p,s}MR^{p-s} a_p(N)\left(\frac{1}{\sqrt{N}}+\frac{1}{N^{p-s}}\right),
\end{align*}
where $c_{p,s}>0$ is a constant depending on $p$ and $s$, and where
\begin{align*}
    a_p(N)\coloneqq \begin{cases} \log_2(N) & \text{if $p=1$,} \\
    1 & \text{otherwise.}
    \end{cases}
\end{align*}
\end{theorem}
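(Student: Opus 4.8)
The plan is to first reduce to the normalized case $R=M=1$ by homogeneity, then to bound the transport cost deterministically by a multiscale sum of per-cell mass discrepancies, and finally to take expectations and sum over scales. For the reduction, I would use the rescaling map $\Phi(t_1,t_2)=(t_1/R,\,t_2/R)$, which sends $\Omega_R$ to $\Omega_1$ and fixes the diagonal, together with the homogeneities $\mathrm{OT}_p^p(\Phi_*\mu,\Phi_*\nu)=R^{-p}\,\mathrm{OT}_p^p(\mu,\nu)$, $\mathrm{Pers}_s(\Phi_*\mu)=R^{-s}\mathrm{Pers}_s(\mu)$, and $\mathrm{OT}_p^p(c\mu,c\nu)=c\,\mathrm{OT}_p^p(\mu,\nu)$ for $c>0$. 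Since $\Phi_*$ and scaling commute with the empirical mean, it suffices to prove the estimate for $R=M=1$; the general statement then follows after multiplying by $MR^{p-s}$.

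Next I would establish a \emph{deterministic} multiscale upper bound. Decompose $\Omega_1$ by distance to the diagonal: for $k\ge 0$ let $L_k=\{x\in\Omega_1:2^{-k-1}<\|x-x^\perp\|_q\le 2^{-k}\}$ and let $\mathcal{Q}_k$ be a partition of $L_k$ into cells of diameter $\lesssim 2^{-k}$. I would construct a transport plan between $\bar{\mu}_N$ and $\mathbf{E}(P)$ that matches as much mass as possible inside each cell, routes the residual per-cell discrepancy to the diagonal (which acts as a free sink), and disposes of all mass within distance $2^{-K}$ of the diagonal by direct orthogonal projection onto $\partial\Omega$. Costing this plan yields
\[
\mathrm{OT}_p^p(\bar{\mu}_N,\mathbf{E}(P)) \;\lesssim_p\; \sum_{k=0}^{K} 2^{-kp}\sum_{Q\in\mathcal{Q}_k}\bigl|\bar{\mu}_N(Q)-\mathbf{E}(P)(Q)\bigr| \;+\; \int_{\{\|x-x^\perp\|_q\le 2^{-K}\}}\|x-x^\perp\|_q^p\,\mathrm{d}\bigl(\bar{\mu}_N+\mathbf{E}(P)\bigr)(x).
\]

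I would then take expectations and choose the cutoff $K\sim\log_2 N$, so that $2^{-K}\sim 1/N$. For the near-diagonal term, on $\{\|x-x^\perp\|_q\le 2^{-K}\}$ one has $\|x-x^\perp\|_q^p\le 2^{-K(p-s)}\|x-x^\perp\|_q^s$, and the almost-sure budget $\mathrm{Pers}_s(\mu)\le 1$ bounds this contribution by $\lesssim 2^{-K(p-s)}\sim N^{-(p-s)}$, which is the second term. For the multiscale term, the i.i.d.\ structure gives $\mathbb{E}\bigl|\bar{\mu}_N(Q)-\mathbf{E}(P)(Q)\bigr|\le N^{-1/2}\sqrt{\mathrm{Var}(\mu(Q))}$, while the a.s.\ budget controls the layer mass via $2^{-(k+1)s}\mu(L_k)\le\mathrm{Pers}_s(\mu)\le 1$, i.e.\ $\mu(L_k)\le 2^{(k+1)s}$ almost surely, which in turn bounds the second moments $\mathbb{E}[\mu(Q)^2]$ through the cell masses. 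Summing the weights $2^{-kp}$ against these estimates produces, after using $\mu(L_k)\le 2^{(k+1)s}$, a geometric series in $2^{-k(p-s)}$ that converges for $p>s$ and delivers the $N^{-1/2}$ term; in the boundary case $p=1$ the relevant series degenerates to a harmonic sum over the $\sim\log_2 N$ active scales, producing the factor $a_1(N)=\log_2 N$.

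The hardest part will be the deterministic transport construction together with obtaining the sharp $N^{-1/2}$ rate without spurious factors: a naive per-cell variance estimate summed by Cauchy--Schwarz across the $\sim 2^{k}$ cells of $L_k$ overcounts and yields a strictly worse exponent, so the matching must be organized hierarchically across scales rather than independently per cell. I expect the cleanest route for $p=1$ to be the Kantorovich--Rubinstein dual, bounding the empirical process $\sup_f\int f\,\mathrm{d}(\bar{\mu}_N-\mathbf{E}(P))$ over $1$-Lipschitz $f$ vanishing on $\partial\Omega$ and supported on $\Omega_1$; there the logarithmic correction $a_1(N)$ is precisely the well-known dimension-two entropy effect for empirical Wasserstein convergence. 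Verifying that the diagonal-sink plan is admissible and that the scale sum balances at $K\sim\log_2 N$ to give exactly $N^{-1/2}+N^{-(p-s)}$ constitutes the main technical work.
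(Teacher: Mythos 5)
First, note that the paper does not prove this statement: \cref{thm:emp_mean_minimax} is imported verbatim as Theorem~1 of \cite{divol2021estimation}, so the only benchmark available here is the cited proof and the closely analogous argument the paper does carry out for \cref{thm:minimax_haar}, both of which rest on the hierarchical transport bound of \cref{lemma:OT_bound}. Your overall architecture --- homogeneity reduction to $R=M=1$, dyadic layers by distance to the diagonal, an explicit coupling whose cost is a weighted sum of per-cell mass discrepancies, second-moment bounds via the almost-sure budget $\mu(B)\leq MR^{-s}2^{ks}$ for $B\subseteq A_k$ (the paper's \cref{eq:bound_any_mu}), and a cutoff $K\sim\log_2 N$ balancing against the near-diagonal term $N^{-(p-s)}$ --- is exactly the right one, and the near-diagonal and variance steps are correct as written.

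The genuine gap is in the deterministic coupling. The inequality you actually display uses a \emph{single} partition $\mathcal{Q}_k$ of each layer $L_k$ into $\sim 2^{k}$ cells of diameter $\sim 2^{-k}$, with residuals routed to the diagonal at cost $\sim 2^{-kp}$ per unit mass. You then concede yourself that estimating this expression (Cauchy--Schwarz across the $2^{k}$ cells) ``overcounts and yields a strictly worse exponent,'' and you defer the fix --- organizing the matching ``hierarchically across scales'' --- to ``the main technical work.'' But that hierarchical organization \emph{is} the theorem's key lemma: it is precisely \cref{lemma:OT_bound}, where within each $A_k$ one pays $2^{-jp}$ (not $2^{-kp}$) for the discrepancy left over at refinement level $j$ of the nested partitions $\mathcal{Q}_{k,0}\supseteq\mathcal{Q}_{k,1}\supseteq\cdots\supseteq\mathcal{Q}_{k,J}$, because unmatched mass in a cell of $\mathcal{Q}_{k,j}$ only needs to travel to its parent cell, not to the diagonal. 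Without this lemma the proof does not close, and your proposal neither states nor constructs it. A related symptom: you attribute the factor $a_1(N)=\log_2 N$ to ``a harmonic sum over the $\sim\log_2 N$ active scales'' $k$ (or, in the dual formulation, to a two-dimensional entropy effect), whereas in the actual argument it arises from the \emph{inner} sum over refinement levels, $\sum_{j=1}^{J}2^{-j(p-1)}$, which equals $J\sim\log_2 N$ exactly when $p=1$ and is bounded otherwise --- compare the factor $\sum_{j=1}^J 2^{-j(p-1)}$ in \cref{eq:sum_kleqK}. The Kantorovich--Rubinstein detour you suggest for $p=1$ is a legitimately different route but is only named, not executed, and it would not cover $p>1$. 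In short: right skeleton, but the load-bearing lemma is missing rather than proved.
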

Thus, whenever $p>1$ and $p-s\geq 1/2$, we obtain the minimax rate $1/\sqrt{N}$. In particular, this holds under the assumption that the points in the observed persistence diagrams $\{\mu_i\}_{i=1}^N$ are bounded by $M$, i.e., when we assume $s=0$.

However, in practice, if we observe $N$ persistence diagrams, the support of $\Bar{\mu}_N$ is typically very large since it corresponds to the union of the support of each diagram. This makes the estimator $\Bar{\mu}_N$ prohibitive for many applications \citep{divol2021estimation}. Additionally, the estimator $\Bar{\mu}_N$ is in general not absolutely continuous with respect to the Lebesgue measure when observing persistence diagrams which are discrete measures.

\subsection{Wavelet-Based Estimation}
To control transportation distances such as $\mathrm{OT}_p$ from above, we construct an explicit coupling (transportation map) between two measures. Indeed, recalling \cref{eq:OT_p}, we take the infimum over all couplings to compute $\mathrm{OT}_p$. Thus, using an explicit coupling yields an upper bound. A simple and natural way to construct an efficient transportation map between two measures is to use a multiscale (dyadic) partition of the underlying set \citep{Wasserstein_partitioning_arg,Wasserstein_partition_arg2}.

Consider the following multiscale partition of $\Omega_R$: for $k\in \mathbb N_0$, define
\begin{align*}
    A_k \coloneqq \left\{x\in \Omega_R \colon R\,2^{-(k+1)}<\left\lVert x-x^\perp\right\rVert_2\leq R\,2^{-k}\right\} \text{ such that } \cup_{k\in \mathbb N_0} A_k = \Omega_R.
\end{align*}
Let $J\in \mathbb N$, consider the sequence of partitions $\{\mathcal{Q}_{k,j-1}\}_{j=1}^J$ of $A_k$, where $\mathcal{Q}_{k,j-1}$ consists of squares whose side length is $R2^{-(k+1)}2^{-j+1}$ such that $\mathcal{Q}_{k,j}$ is a refinement of $\mathcal{Q}_{k,j-1}$, i.e., for every $Q\in \mathcal{Q}_{k,j}$ there exists a $Q' \in \mathcal{Q}_{k,j-1}$ satisfying $Q\subseteq Q'$. \cref{fig:multiscale_partition} illustrates this multiscale partition. 

\begin{figure}[t!]
	\centering
	\begin{subfigure}{0.4585\textwidth}
		\resizebox{1\columnwidth}{!}{
\def\R{10}
\begin{tikzpicture}[scale=1, every node/.style={scale=1.3}]
\draw ({-\R/sqrt((8)},{\R/sqrt(8)-\R/sqrt(2)}) --
({-\R/sqrt(8)-\R/sqrt(2)},{\R/sqrt(8)}) -- ({-\R/sqrt(8)},{\R/sqrt(8))+\R/sqrt(2)}) --  ({-\R/sqrt(8)+\R/sqrt(2)},{\R/sqrt(8)}) -- ({-\R/sqrt(8)},{\R/sqrt(8)-\R/sqrt(2)});
\foreach \k in {1,...,6} {
    \draw ({-\R/sqrt(8)-2^(-\k)*\R/sqrt(2)},{\R/sqrt(8)-(1-2^(-\k))*\R/sqrt(2)}) --
({-\R/sqrt(8)+(1-2^(-\k))*\R/sqrt(2)},{\R/sqrt(8)+2^(-\k)*\R/sqrt(2)});
}
\fill[color=black!10] ({-\R/sqrt(8)-\R/10},{\R/sqrt(8)-\R/sqrt(2)-\R/10}) -- ({-\R/sqrt(8)+\R/sqrt(2)+\R/10},{\R/sqrt(8)+\R/10}) --
({-\R/sqrt(8)+\R/sqrt(2)+\R/10},{\R/sqrt(8)-\R/sqrt(2)-\R/10});
\draw[very thick] ({-\R/sqrt(8)-\R/10},{\R/sqrt(8)-\R/sqrt(2)-\R/10}) -- ({-\R/sqrt(8)+\R/sqrt(2)+\R/10},{\R/sqrt(8)+\R/10});
\node at ({-\R/sqrt(\R)+\R/sqrt(2)+\R/40},{\R/sqrt(8)}) {$\partial\Omega$};
\node at ({-\R/sqrt(8)-2^(-1)*\R/sqrt(2)-\R/15},{\R/sqrt(8)+(1-2^(-1))*\R/sqrt(2)+\R/15}) {$\Omega_R$};
\node at ({-\R/sqrt(8)-(1-2^(-2))*\R/sqrt(2)-\R/15},{\R/sqrt(8)-(2^(-2))*\R/sqrt(2)-\R/15}) {$A_0$};
\node at ({-\R/sqrt(8)-(2^(-2)+2^(-3))*\R/sqrt(2)-\R/15},{\R/sqrt(8)-(1-2^(-2)-2^(-3))*\R/sqrt(2)-\R/15}) {$A_1$};
\node at ({-\R/sqrt(8)-(2^(-3)+2^(-4))*\R/sqrt(2)-\R/15},{\R/sqrt(8)-(1-2^(-3)-2^(-4))*\R/sqrt(2)-\R/15}) {$A_2$};
\node at ({-\R/sqrt(8)-(2^(-4)+2^(-5))*\R/sqrt(2)-\R/15},{\R/sqrt(8)-(1-2^(-4)-2^(-5))*\R/sqrt(2)-\R/15}) {$A_3$};
\end{tikzpicture}
}
	\end{subfigure}
	\begin{subfigure}{0.49\textwidth}
	   \definecolor{Color2}{RGB}{0,113,188}
\definecolor{Color1}{RGB}{0,155,85}
\definecolor{Color0}{named}{red}
\resizebox{1\columnwidth}{!}{
\def\R{10}
\begin{tikzpicture}[scale=1, every node/.style={scale=1.3}]
\draw ({-\R/sqrt((8)},{\R/sqrt(8)-\R/sqrt(2)}) --
({-\R/sqrt(8)-\R/sqrt(2)},{\R/sqrt(8)}) -- ({-\R/sqrt(8)},{\R/sqrt(8))+\R/sqrt(2)}) --  ({-\R/sqrt(8)+\R/sqrt(2)},{\R/sqrt(8)}) -- ({-\R/sqrt(8)},{\R/sqrt(8)-\R/sqrt(2)});
\foreach \k in {1,...,6} {
    \draw[very thick] ({-\R/sqrt(8)-2^(-\k)*\R/sqrt(2)},{\R/sqrt(8)-(1-2^(-\k))*\R/sqrt(2)}) --
({-\R/sqrt(8)+(1-2^(-\k))*\R/sqrt(2)},{\R/sqrt(8)+2^(-\k)*\R/sqrt(2)});
}
\fill[color=black!10] ({-\R/sqrt(8)-\R/10},{\R/sqrt(8)-\R/sqrt(2)-\R/10}) -- ({-\R/sqrt(8)+\R/sqrt(2)+\R/10},{\R/sqrt(8)+\R/10}) --
({-\R/sqrt(8)+\R/sqrt(2)+\R/10},{\R/sqrt(8)-\R/sqrt(2)-\R/10});
\draw[very thick] ({-\R/sqrt(8)-\R/10},{\R/sqrt(8)-\R/sqrt(2)-\R/10}) -- ({-\R/sqrt(8)+\R/sqrt(2)+\R/10},{\R/sqrt(8)+\R/10});
\node at ({-\R/sqrt(\R)+\R/sqrt(2)+\R/40},{\R/sqrt(8)}) {$\partial\Omega$};
\node at ({-\R/sqrt(8)-2^(-1)*\R/sqrt(2)-\R/15},{\R/sqrt(8)+(1-2^(-1))*\R/sqrt(2)+\R/15}) {$\Omega_R$};
\node at ({-\R/sqrt(8)-(1-2^(-2))*\R/sqrt(2)-\R/15},{\R/sqrt(8)-(2^(-2))*\R/sqrt(2)-\R/15}) {$A_0$};
\node at ({-\R/sqrt(8)-(2^(-2)+2^(-3))*\R/sqrt(2)-\R/15},{\R/sqrt(8)-(1-2^(-2)-2^(-3))*\R/sqrt(2)-\R/15}) {$A_1$};
\node at ({-\R/sqrt(8)-(2^(-3)+2^(-4))*\R/sqrt(2)-\R/15},{\R/sqrt(8)-(1-2^(-3)-2^(-4))*\R/sqrt(2)-\R/15}) {$A_2$};
\node at ({-\R/sqrt(8)-(2^(-4)+2^(-5))*\R/sqrt(2)-\R/15},{\R/sqrt(8)-(1-2^(-4)-2^(-5))*\R/sqrt(2)-\R/15}) {$A_3$};
\foreach[parse=true] \k in {0,...,3}{
    \coordinate (P1) at ({-\R/sqrt(8)-2^(-\k)*\R/sqrt(2)},{\R/sqrt(8)-(1-2^(-\k))*\R/sqrt(2)});
    \coordinate (P2) at ({-\R/sqrt(8)+(1-2^(-\k))*\R/sqrt(2)},{\R/sqrt(8)+2^(-\k)*\R/sqrt(2)});
    \coordinate (P3) at ({-\R/sqrt(8)-2^(-(\k+1))*\R/sqrt(2)},{\R/sqrt(8)-(1-2^(-(\k+1)))*\R/sqrt(2)});
    \coordinate (P4) at ({-\R/sqrt(8)+(1-2^(-(\k+1)))*\R/sqrt(2)},{\R/sqrt(8)+2^(-(\k+1))*\R/sqrt(2)});
    \foreach[parse=true] \j in {2,...,2} {
        \foreach[parse=true] \l in {1,...,2^(\j+\k+1)-1} {
            \draw[Color2] ($(P1)+({\l/sqrt(2)*\R*2^(-\k-1-\j)},{\l/sqrt(2)*\R*2^(-\k-1-\j)})$) -- ($(P3)+({\l/sqrt(2)*\R*2^(-\k-1-\j)},{\l/sqrt(2)*\R*2^(-\k-1-\j)})$);
        }
        \foreach[parse=true] \l in {1,...,2^(\j)-1} {
            \draw[Color2] ($(P1)+({\l/sqrt(2)*\R*2^(-\k-1-\j)},{-\l/sqrt(2)*\R*2^(-\k-1-\j)})$) -- ($(P2)+({\l/sqrt(2)*\R*2^(-\k-1-\j)},{-\l/sqrt(2)*\R*2^(-\k-1-\j)})$);
        }
    }
    \foreach[parse=true] \j in {1,...,1} {
        \foreach[parse=true] \l in {1,...,2^(\j+\k+1)-1} {
            \draw[Color1, thick] ($(P1)+({\l/sqrt(2)*\R*2^(-\k-1-\j)},{\l/sqrt(2)*\R*2^(-\k-1-\j)})$) -- ($(P3)+({\l/sqrt(2)*\R*2^(-\k-1-\j)},{\l/sqrt(2)*\R*2^(-\k-1-\j)})$);
        }
        \foreach[parse=true] \l in {1,...,2^(\j)-1} {
            \draw[Color1, thick] ($(P1)+({\l/sqrt(2)*\R*2^(-\k-1-\j)},{-\l/sqrt(2)*\R*2^(-\k-1-\j)})$) -- ($(P2)+({\l/sqrt(2)*\R*2^(-\k-1-\j)},{-\l/sqrt(2)*\R*2^(-\k-1-\j)})$);
        }
    }
    \foreach[parse=true] \j in {0,...,0} {
        \foreach[parse=true] \l in {1,...,2^(\j+\k+1)-1} {
            \draw[Color0, very thick] ($(P1)+({\l/sqrt(2)*\R*2^(-\k-1-\j)},{\l/sqrt(2)*\R*2^(-\k-1-\j)})$) -- ($(P3)+({\l/sqrt(2)*\R*2^(-\k-1-\j)},{\l/sqrt(2)*\R*2^(-\k-1-\j)})$);
        }
    }
}
\node[rotate=45] at ({-\R/sqrt(8)-(1-2^(-2)-1)*\R/sqrt(2)+\R/8},{\R/sqrt(8)-(2^(-2)-1)*\R/sqrt(2)+\R/8}) {\color{Color0}$\mathcal{Q}_{0,0}$, \color{Color1}$\mathcal{Q}_{0,1}$, \color{Color2}$\mathcal{Q}_{0,2}$};
\node[rotate=45] at ({-\R/sqrt(8)-(2^(-2)+2^(-3)-1)*\R/sqrt(2)+\R/8},{\R/sqrt(8)-(1-2^(-2)-2^(-3)-1)*\R/sqrt(2)+\R/8}) {\color{Color0}$\mathcal{Q}_{1,0}$, \color{Color1}$\mathcal{Q}_{1,1}$, \color{Color2}$\mathcal{Q}_{2,2}$};
\node[rotate=45] at ({-\R/sqrt(8)-(2^(-3)+2^(-4)-1)*\R/sqrt(2)+\R/8},{\R/sqrt(8)-(1-2^(-3)-2^(-4)-1)*\R/sqrt(2)+\R/8}) {\color{Color0}$\mathcal{Q}_{2,0}$, \color{Color1}$\mathcal{Q}_{2,1}$, \color{Color2}$\mathcal{Q}_{2,2}$};
\node[rotate=45] at ({-\R/sqrt(8)-(2^(-4)+2^(-5)-1)*\R/sqrt(2)+\R/8},{\R/sqrt(8)-(1-2^(-4)-2^(-5)-1)*\R/sqrt(2)+\R/8}) {\color{Color0}$\mathcal{Q}_{3,0}$, \color{Color1}$\mathcal{Q}_{3,1}$, \color{Color2}$\mathcal{Q}_{3,2}$};
\end{tikzpicture}
}
	\end{subfigure}
	\vspace{-7.5mm}
	\caption{Multiscale partition of $\Omega_R$ used to bound $\mathrm{OT}_p$ and to construct the wavelet estimator. A similar figure appeared in \citep{divol2021estimation}.}
	\label{fig:multiscale_partition}
\end{figure}
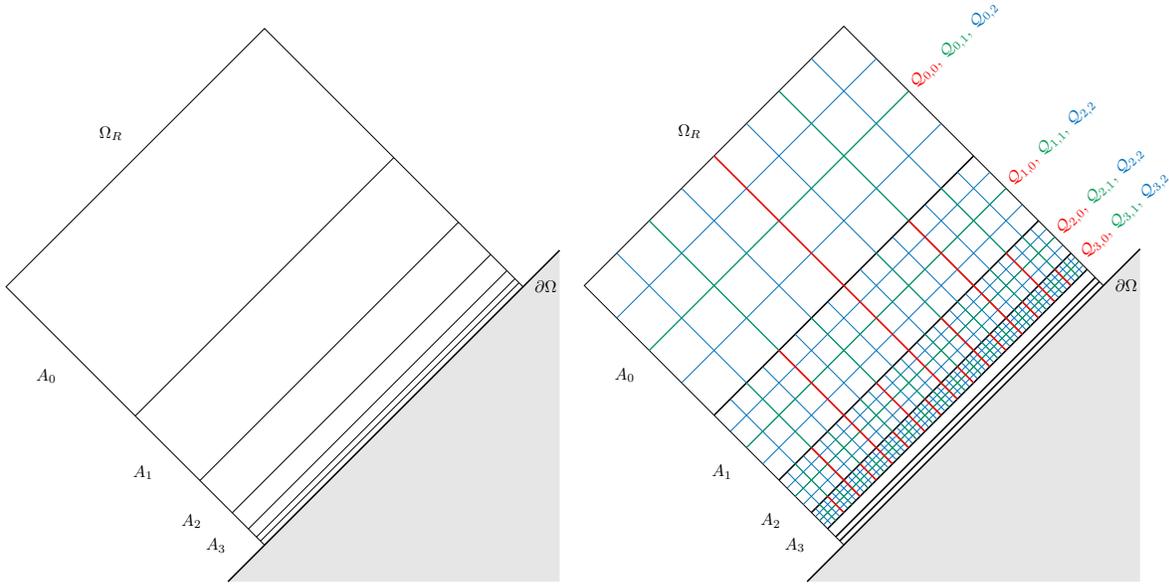

Intuitively, to construct an explicit transportation map, we recursively apply the following steps for each $k\in \mathbb N_0$: given the partition $\mathcal{Q}_{k,0}$, move mass between sets in $\mathcal{Q}_{k,0}$, and then within each set in $\mathcal{Q}_{k,0}$, using the partition $\mathcal{Q}_{k,1}$. This procedure gives the following bound. 
\allowdisplaybreaks
\begin{lemma}[Lemma 4 in \citep{divol2021estimation}]\label{lemma:OT_bound}
Let $\mu,\nu\in\mathcal{M}^p$ supported on $\Omega_R$, and let $J\in \mathbb N$. Then
\begin{align*}
    \mathrm{OT}_p^p(\mu,\nu) &\leq 2^{p/2}R^p \sum_{k\geq 0}2^{-kp}\Bigg(2^{-Jp} (\mu(A_k)\wedge\nu(A_k))+c_p\lvert \mu (A_k)-\nu(A_k)\rvert \\&\quad+\sum_{j=1}^J 2^{-jp} \sum_{Q\in \mathcal{Q}_{k,j-1}}\lvert \mu(Q) -\nu (Q) \rvert\Bigg),
\end{align*}
where $c_p \coloneqq 2^{-p/2}(1+1/(2^p-1))$.
\end{lemma}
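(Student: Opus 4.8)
The plan is to use that $\mathrm{OT}_p^p(\mu,\nu)$ is an infimum over $\mathrm{Adm}(\mu,\nu)$, so that exhibiting a single admissible coupling---one that is free to match mass to the diagonal $\partial\Omega$---and bounding its cost gives an upper bound. First I would localize the problem to the annuli $A_k$. Since $\{A_k\}_{k\ge0}$ partitions $\Omega_R$ and the marginal constraints defining $\mathrm{Adm}(\mu,\nu)$ are imposed only on Borel subsets of $\Omega$ (the diagonal carries no marginal constraint and acts as a free reservoir), any family of admissible couplings $\gamma_k$ between the restrictions $\mu|_{A_k}$ and $\nu|_{A_k}$ concatenates into $\gamma=\sum_{k\ge0}\gamma_k$, which is admissible for $(\mu,\nu)$. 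This yields the subadditivity
\begin{align*}
    \mathrm{OT}_p^p(\mu,\nu)\le \sum_{k\ge0}\mathrm{OT}_p^p\big(\mu|_{A_k},\nu|_{A_k}\big),
\end{align*}
and reduces the claim to constructing, for each fixed $k$, a coupling on $A_k$ whose cost is at most the $k$-th summand on the right-hand side.

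Within a fixed annulus $A_k$ I would build $\gamma_k$ hierarchically along the nested refinements $\mathcal{Q}_{k,0},\dots,\mathcal{Q}_{k,J-1}$ (from coarsest to finest), matching mass as locally as possible and deferring the remainder to coarser scales or to the diagonal. Inside each finest cell $Q\in\mathcal{Q}_{k,J-1}$ I match $\mu(Q)\wedge\nu(Q)$ units in place; since $\sum_{Q}\mu(Q)\wedge\nu(Q)\le\mu(A_k)\wedge\nu(A_k)$ and each such transport stays within a square of Euclidean diameter $\sqrt2\,R2^{-(k+1)}2^{-J+1}$, this contributes at most $2^{p/2}R^p2^{-kp}2^{-Jp}\,(\mu(A_k)\wedge\nu(A_k))$, the first term. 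The mass left unmatched in a cell is then reshuffled within its parent at the next coarser scale and the residual deferred further upward, iterating through $\mathcal{Q}_{k,J-2},\dots,\mathcal{Q}_{k,0}$. A square in $\mathcal{Q}_{k,j-1}$ has Euclidean diameter $\sqrt2\,R2^{-(k+1)}2^{-j+1}$, whose $p$-th power equals $2^{p/2}R^p2^{-kp}2^{-jp}$ and supplies the weight $2^{-jp}$; the mass reshuffled at that scale is controlled by the local discrepancies $\sum_{Q\in\mathcal{Q}_{k,j-1}}\lvert\mu(Q)-\nu(Q)\rvert$, and summing over $j=1,\dots,J$ produces the third term. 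Finally, the net imbalance that cannot be matched internally is sent to $\partial\Omega$. Because the annulus $A_k$ is elongated parallel to the diagonal, shuffling this residual along $A_k$ at coarse scales would be costly; instead it is dumped directly onto the nearby diagonal, every point of $A_k$ lying within Euclidean distance $R2^{-k}$ of $\partial\Omega$. The net imbalance $\lvert\mu(A_k)-\nu(A_k)\rvert$ then incurs a direct cost $R^p2^{-kp}=2^{p/2}R^p2^{-kp}\cdot 2^{-p/2}$ (the ``$1$'' inside $c_p$), while the per-scale residuals routed to $\partial\Omega$ rather than moved along the strip accumulate the geometric contribution $2^{-p/2}\sum_{j\ge1}2^{-jp}=2^{-p/2}/(2^p-1)$, giving the second term with $c_p=2^{-p/2}(1+1/(2^p-1))$.

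Assembling the three contributions for each $k$ and summing over $k$ recovers the stated inequality, the overall prefactor $2^{p/2}$ originating from the Euclidean diameter $\sqrt2\times(\text{side length})$ of an axis-aligned square. The hard part is the combinatorial bookkeeping of the recursive step: I must verify that the mass genuinely transported within the cells of $\mathcal{Q}_{k,j-1}$ is dominated by $\sum_{Q\in\mathcal{Q}_{k,j-1}}\lvert\mu(Q)-\nu(Q)\rvert$ (with the correct alignment of weight $2^{-jp}$ to partition $\mathcal{Q}_{k,j-1}$), that the deferred residuals telescope so that precisely $\lvert\mu(A_k)-\nu(A_k)\rvert$---rather than the larger cellwise sum---survives to reach the diagonal, and that the local pieces glue into a single measure $\gamma_k\in\mathrm{Adm}(\mu|_{A_k},\nu|_{A_k})$ with the correct marginals once $\partial\Omega$ is adjoined. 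Pinning down the sharp constant $c_p$, as opposed to the cruder bound $R^p2^{-kp}$ for the imbalance, requires carefully tracking how the mass routed to the diagonal accumulates across scales, and this constant-tracking is the most delicate component of the estimate.
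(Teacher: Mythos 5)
First, a point of comparison: the paper itself does not prove \cref{lemma:OT_bound} --- it is imported verbatim as Lemma 4 of \cite{divol2021estimation}, and the surrounding text offers only a one-sentence intuition about recursively moving mass through the partitions $\mathcal{Q}_{k,j}$. So there is no in-paper proof to measure you against; your sketch is effectively a reconstruction of the argument in the cited reference, and its overall strategy is the correct and intended one. The localization step is sound (since the diagonal carries no marginal constraint, couplings of the restrictions $\mu|_{A_k},\nu|_{A_k}$ do concatenate into an admissible coupling, giving $\mathrm{OT}_p^p(\mu,\nu)\le\sum_k\mathrm{OT}_p^p(\mu|_{A_k},\nu|_{A_k})$), and your identification of the three terms --- common mass matched in place at the finest scale, per-scale discrepancies, net imbalance sent to $\partial\Omega$ at distance at most $R2^{-k}$ --- is the right decomposition with the right diameter computations.

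The one place where the sketch glosses over a genuine difficulty, and where I would not accept ``bookkeeping'' as a placeholder, is the cost accounting for mass handled at more than one scale when $p>1$. In a coupling, a unit of mass has a single source--destination pair and cost $\|x-y\|_q^p$; if a residual is shuffled through several scales before reaching the diagonal, its cost is $\left(\sum_m d_m\right)^p$ for the accumulated displacement, not $\sum_m d_m^p$, and for $p>1$ the former exceeds the latter. Your heuristic that the deferred residuals ``accumulate the geometric contribution $2^{-p/2}\sum_{j\ge1}2^{-jp}$'' implicitly sums $p$-th powers across scales, so as written it does not justify $c_p=2^{-p/2}(1+1/(2^p-1))$. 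The standard repair is to design the plan so that each unit of mass is displaced essentially once --- matched in place where possible, otherwise moved a distance bounded by the diameter of the lowest common ancestor cell, with the amount of such mass controlled by cell discrepancies --- and then to check that the weight $2^{-jp}$ aligns with the partition $\mathcal{Q}_{k,j-1}$ rather than with its children; you correctly flag both points as unverified. Since the stated constants depend on exactly these steps, the proposal is a correct plan rather than a complete proof.
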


\paragraph{Constructing the Wavelet-Based Estimator.}
We start by recalling the standard Haar system in $\mathbb R^2$. Denote by $\chi$ the characteristic function and let $\tilde \varphi\coloneqq\chi_{[0,1]^2}$. Set
\begin{align*}
    \tilde\varphi_{j,\ell}(t_1,t_2)\coloneqq2^j\tilde\varphi(2^j(t_1,t_2)+\ell), \quad (t_1,t_2)\in\mathbb R^2,\, j\in\mathbb N_0,\, \ell=(m,n)\in\mathbb Z^2.
\end{align*}
For $(t_1,t_2)\in\mathbb R^2$, define
\begin{align*}
    \tilde\psi^{a}(t_1,t_2) &\coloneqq \left(\chi_{[0,1/2)}(t_1)-\chi_{[1/2,1)}(t_1)\right)\chi_{[0,1]}(t_2), \\
    \tilde\psi^{b}(t_1,t_2) &\coloneqq \chi_{[0,1]}(t_1) \left(\chi_{[0,1/2)}(t_2)-\chi_{[1/2,1)}(t_2)\right), \\
    \tilde\psi^{c}(t_1,t_2) &\coloneqq \left(\chi_{[0,1/2)}(t_1)-\chi_{[1/2,1)}(t_1)\right)\left(\chi_{[0,1/2)}(t_2)-\chi_{[1/2,1)}(t_2)\right),
\end{align*}
and set, for $\alpha\in\{a,b,c\}$,
\begin{align*}
    \tilde\psi^\alpha_{j,\ell}(t_1,t_2)\coloneqq 2^j\tilde\psi^\alpha (2^j(t_1,t_2)+\ell), \quad (t_1,t_2)\in\mathbb R^2,\, j\in\mathbb N_0,\, \ell=(m,n)\in\mathbb Z^2.
\end{align*}
To adapt the Haar system to the geometry of our setup so that the wavelet functions are piecewise constant on the squares of our multiscale partition (see \cref{fig:multiscale_partition}), we apply a change of variables. First translate by $1/2$ in the negative $t_1$-direction, scale by a factor of $R$, and finally rotate by $+\pi/4$ to get
\begin{align}\label{eq:ch_var}
    \mathbb R^2 \xrightarrow[]{} \mathbb R^2, \quad 
    \begin{pmatrix} t_1 \\ t_2 \end{pmatrix} \mapsto \begin{pmatrix}
    u(t_1,t_2) \\ v(t_1,t_2) \end{pmatrix}\coloneqq
    \begin{pmatrix}
        \displaystyle \frac{t_1+t_2}{\sqrt{2}R}+\frac 12 \\
        \displaystyle \frac{t_2-t_1}{\sqrt{2}R}
    \end{pmatrix}.
\end{align}
Normalizing yields
\begin{align*}
    \varphi(t_1,t_2) \coloneqq R^{-1}\chi_{[0,1]^2}(u(t_1,t_2),\, v(t_1,t_2)) = R^{-1}\chi_{\Omega_R}(t_1,t_2).
\end{align*}
The scaled and translated versions of $\varphi$ then read
\begin{align*}
   \varphi_{j,\ell}(t_1,t_2) = 2^j\varphi (2^jt_1+a_{j,\ell},\, 2^jt_2+b_{j,l}), \quad j\in\mathbb N_{0},\, \ell=(m,n)\in\mathbb Z^2,  
\end{align*}
where $a_{j,\ell}\coloneqq(m-n-2^{-1}+2^{j-1})R/\sqrt{2}$ and $b_{j,\ell}\coloneqq(m+n-2^{-1}+2^{j-1})R/\sqrt{2}$.
Moreover,
\begin{align*}
    \psi^{a}(t_1,t_2) &\coloneqq \frac 1 R \left(\chi_{[0,1/2)}(u(t_1,t_2))-\chi_{[1/2,1)}(u(t_1,t_2))\right)\chi_{[0,1]}(v(t_1,t_2)), \\
    \psi^{b}(t_1,t_2) &\coloneqq \frac 1 R \chi_{[0,1]}(u(t_1,t_2)) \left(\chi_{[0,1/2)}(v(t_1,t_2))-\chi_{[1/2,1)}(v(t_1,t_2))\right), \\
    \psi^{c}(t_1,t_2) &\coloneqq \frac 1 R \left(\chi_{[0,1/2)}(u(t_1,t_2))-\chi_{[1/2,1)}(u(t_1,t_2))\right) \left(\chi_{[0,1/2)}(v(t_1,t_2))-\chi_{[1/2,1)}(v(t_1,t_2))\right),
\end{align*}
and
\begin{align}\label{eq:psi_abc}
   \psi^{\alpha}_{j,\ell}(t_1,t_2) = 2^j\psi^{\alpha} (2^jt_1+a_{j,\ell},\, 2^jt_2+b_{j,\ell}), \quad j\in\mathbb N_{0},\, \ell=(m,n)\in\mathbb Z^2,\, \alpha\in\{a,b,c\}.  
\end{align}
Note that for any $j_0 \in \mathbb N_0$, the set 
\begin{align*}
    \mathcal{V}_{j_0} \cup \bigcup_{j\geq j_0} \mathcal{W}_{j}
\end{align*}
is a complete orthonormal system in $L^2(\mathbb R^2)$. We thus have the following wavelet expansion for any function $f\in L^2(\mathbb R^2)$: 
\begin{align}\label{eq:wav_exp}
    f = \sum_{\phi \in \mathcal{V}_{j_0}} \alpha_\phi \phi + \sum_{j\geq j_0} \sum_{\psi \in \mathcal{W}_j} \beta_\psi \psi,
\end{align}
where
\begin{align*}
    \alpha_\phi \coloneqq \int_{\mathbb R^2} \phi(x)f(x)\,\mathrm{d}x, \quad \phi \in \mathcal{V}_{j_0}, \quad \text{ and } \quad
    \beta_\psi \coloneqq \int_{\mathbb R^2} \psi(x)f(x)\,\mathrm{d}x, \quad \psi \in \mathcal{W}_j, j\geq j_0.
\end{align*}
That is, the wavelet estimator of an unknown density is constructed by estimating its projection on the wavelet basis. If $f\in L^r(\mathbb R^2)$, $1\leq r<\infty$, the convergence of the infinite sum in \cref{eq:wav_exp} holds in $L^r(\mathbb R^2)$.

Fix any probability distribution $P\in \mathcal{P}_{R,M}^s$ and let $f$ be the Lebesgue density of $\mathbf{E}(P)$. Given $k\in \mathbb N_0$, denote the restriction of $f$ to the set $A_k$ by $\left.f\right|_{A_k}$, and consider the following wavelet expansion
\begin{align*}
    \left.f\right|_{A_k} = \sum_{\phi\in \mathcal{V}_{k+1}}\alpha_\phi\phi + \sum_{j\geq k+1} \sum_{\psi \in \mathcal{W}_j} \beta_{\psi} \psi,
\end{align*}
where $\alpha_\phi = \int_{\Omega_R} \phi(x) f(x)\,\mathrm{d}x$ and $\beta_\psi = \int_{\Omega_R} \psi(x) f(x)\,\mathrm{d}x$. If $f\in L^{r}(\Omega_R)$, $r\in[1,\infty)$, the convergence also holds in $L^r(\Omega_R)$. Note that since $\Omega_R$ is bounded and the functions $\phi\in\mathcal{V}_{k+1}$ and $\psi \in \mathcal{W}_j$ are compactly supported, the sums over $\mathcal{V}_{k+1}$ and $\mathcal{W}_j$ consist of finitely many terms.

\paragraph{Haar Wavelet Estimator.} Given $N$ i.i.d.~observations $\{\mu_i\}_{i=1}^N\sim P$, we construct the estimator $\hat{f}$ such that
\begin{align}\label{eq:Haar_estimator}
    \left.\hat{f}\right|_{A_k} = \sum_{\phi\in \mathcal{V}_{k+1}}\hat{\alpha}_\phi\phi + \sum_{j=k+1}^{J+K} \sum_{\psi \in \mathcal{W}_j} \hat{\beta}_{\psi} \psi, \quad k\in\{0,1,\dots,K\},
\end{align}
where $K\in\mathbb N_0, J\in\mathbb N$ are fixed, and where
\begin{align*}
    \hat{\alpha}_{\phi} = \frac 1N \sum_{i=1}^N \int_{\Omega_R} \phi\,\mathrm{d}\mu_i \quad \text{and} \quad \hat{\beta}_{\psi} = \frac 1N \sum_{i=1}^N \int_{\Omega_R} \psi\,\mathrm{d}\mu_i.
\end{align*}
Set $\left.\hat{f}\right|_{A_k}=0$ for all $k\geq K+1$. 

The function $\hat{f}$ gives the nonparametric Haar wavelet estimator. Its construction is based on a truncation of the series expansion onto the wavelet basis; the coefficients of the expansion are set through an empirical estimate. We will denote by $\hat{\mu}_\text{H}$ the measure with Lebesgue density $\hat{f}$. 
 
We have now introduced all the required quantities in order to state our first main result, namely, the minimaxity of the Haar wavelet estimator.

\begin{theorem}[Minimaxity of the Haar wavelet estimator]
\label{thm:minimax_haar}
Let $P$ be a probability distribution supported on $\mathcal{M}^s_{M,R}$ such that the Lebesgue density $f$ of $\mathbf{E}(P)$ satisfies 
\begin{align*}
    \|f\|_{L^\infty(A_k)} \leq C^2MR^{-s}2^{ks}, \quad k\in\mathbb N_0, \text{ for some $C>0$,}
\end{align*}
and let $1\leq p<\infty$, $0\leq s<p$. Then, for $\hat{\mu}_\text{H}$ with $J=K=\lceil \log_2(N) \rceil$,
\begin{align*}
    \mathbb E \left[\mathrm{OT}_p^p(\hat{\mu}_\text{H},\mathbf{E}(P))\right] \leq c_{p,s}MR^{p-s}\left(\frac{R}{N^p}+\frac{CRa_p(N)}{\sqrt{N}}+\frac{1}{N^{p-s}}\right),
\end{align*}
where $c_{p,s}$ is a constant depending only on $p$ and $s$, and where
\begin{align*}
    a_p(N)\coloneqq \begin{cases} \log_2(N) & \text{if $p=1$,} \\
    1 & \text{otherwise.}
    \end{cases}
\end{align*}
\end{theorem}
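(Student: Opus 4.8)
The plan is to apply the multiscale transport bound of \cref{lemma:OT_bound} to the pair $(\hat{\mu}_{\mathrm{H}}, \mathbf{E}(P))$ with the depth parameter set to $J=K=\lceil \log_2 N\rceil$, and then take expectations. This reduces the problem to controlling three families of quantities, all weighted by the geometric factors $2^{-kp}$ and $2^{-jp}$: the finest-scale remainder $2^{-Jp}\,\mathbb{E}[\hat{\mu}_{\mathrm{H}}(A_k)\wedge\mathbf{E}(P)(A_k)]$, the annulus deviations $\mathbb{E}\lvert \hat{\mu}_{\mathrm{H}}(A_k)-\mathbf{E}(P)(A_k)\rvert$, and the cell deviations $\mathbb{E}\lvert \hat{\mu}_{\mathrm{H}}(Q)-\mathbf{E}(P)(Q)\rvert$ for the squares $Q\in\mathcal{Q}_{k,j-1}$. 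I would split the outer sum over $k$ into the active range $k\le K$, where the estimator is nonzero, and the tail $k> K$, where $\hat{\mu}_{\mathrm{H}}$ vanishes by construction; these two ranges produce qualitatively different error terms.

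The first key step is a structural identity for the Haar estimator. Because the rotated and rescaled Haar system in \cref{eq:psi_abc} is piecewise constant on exactly the squares of the multiscale partition, the truncated reconstruction $\hat{f}\vert_{A_k}$ in \cref{eq:Haar_estimator} equals the empirical version of the $L^2$-projection of $f$ onto the finest retained scaling space. Consequently, for every square $Q$ of absolute scale at most $J+K$ --- which covers all squares appearing in the lemma, since $k\le K$ and $j\le J$ force the scale $k+j\le J+K$ --- integrating the reconstruction reproduces the empirical cell mass exactly:
\[
  \hat{\mu}_{\mathrm{H}}(Q) = \bar{\mu}_N(Q) = \frac{1}{N}\sum_{i=1}^N \mu_i(Q), \qquad \hat{\mu}_{\mathrm{H}}(A_k) = \bar{\mu}_N(A_k) \quad (k\le K).
\]
Since $\mathbb{E}[\mu_i(Q)] = \mathbf{E}(P)(Q)$ by definition of the expected persistence diagram, each $\bar{\mu}_N(Q)$ is an unbiased estimator of $\mathbf{E}(P)(Q)$, so the estimator deviations become centered empirical-mean fluctuations.

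The heart of the argument is bounding, for $k\le K$, these statistical fluctuations. I would use $\mathbb{E}\lvert\bar{\mu}_N(Q)-\mathbf{E}(P)(Q)\rvert\le N^{-1/2}\sqrt{\mathrm{Var}(\mu(Q))}$ and control the variance by combining two inputs. First, the persistence constraint $\mathrm{Pers}_s(\mu)\le M$ together with $\lVert x-x^\perp\rVert\ge R2^{-(k+1)}$ on $A_k$ forces the deterministic pointwise bound $\mu(Q)\le\mu(A_k)\le m_k:=MR^{-s}2^{(k+1)s}$, whence $\mathrm{Var}(\mu(Q))\le m_k\,\mathbf{E}(P)(Q)$. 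Second, the hypothesis $\lVert f\rVert_{L^\infty(A_k)}\le C^2MR^{-s}2^{ks}$ gives $\mathbf{E}(P)(Q)\le C^2MR^{-s}2^{ks}\lvert Q\rvert$, which is the sharper estimate at fine scales. Feeding these into a Cauchy--Schwarz aggregation over the $\approx 2^{k+2j}$ cells of each level $(k,j)$ --- using $\sum_{Q}\sqrt{\lvert Q\rvert}\approx R2^{j}$ --- yields a per-level contribution of order $CMR^{1-s}2^{ks}2^{j}$ before the weights $2^{-kp}2^{-jp}$ and the $R^p$ prefactor. Summing the resulting geometric series in $j$ produces the factor $a_p(N)$ (bounded for $p>1$, and equal to $\sim\log_2 N$ in the critical case $p=1$ where the $j$-series fails to converge), while the $k$-series converges because $p>s$; together these yield the middle term proportional to $CR\,a_p(N)/\sqrt{N}$. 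The annulus deviations $\mathbb{E}\lvert\bar{\mu}_N(A_k)-\mathbf{E}(P)(A_k)\rvert$ are handled identically and are of the same or smaller order.

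Finally, the two boundary terms are comparatively routine. The finest-scale remainder contributes at most $2^{-Jp}\sum_{k\le K}2^{-kp}\mathbf{E}(P)(A_k)$, and since $2^{-Jp}\le N^{-p}$ and $\sum_k 2^{-kp}\mathbf{E}(P)(A_k)$ converges via $\mathbf{E}(P)(A_k)\le m_k$ and $p>s$, this gives the $N^{-p}$ term. On the tail $k>K$ the estimator is zero, so every deviation equals the true mass and the contribution reduces to $\sum_{k>K}2^{-kp}\mathbf{E}(P)(A_k)\lesssim\sum_{k>K}2^{-k(p-s)}\approx 2^{-K(p-s)}\approx N^{-(p-s)}$, the $N^{-(p-s)}$ term. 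I expect the main obstacle to be the variance aggregation: one must simultaneously exploit the global persistence budget (effective at coarse scales) and the local $L^\infty$ density bound (effective at fine scales), balancing them through Cauchy--Schwarz so that the fine scales do not blow up and the borderline logarithm at $p=1$ is captured correctly. Verifying the exact cell-mass reproduction of the rotated Haar system is the other delicate point.
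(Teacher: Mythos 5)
Your proposal is correct and follows essentially the same route as the paper's proof: \cref{lemma:OT_bound} applied to $(\hat{\mu}_{\mathrm{H}},\mathbf{E}(P))$, the split into the active range $k\le K$ and the tail $k>K$, and the same two analytic inputs (the persistence budget $\mu(B)\le MR^{-s}2^{ks}$ for $B\subseteq A_k$ and the $L^\infty$ bound on $f$) feeding a $\sqrt{\mathrm{Var}/N}$ bound on the cell deviations, with the $j$-series producing $a_p(N)$ and the $k$-series converging because $p>s$. Your identity $\hat{\mu}_{\mathrm{H}}(Q)=\bar{\mu}_N(Q)$ for all retained scales is a correct and slightly cleaner repackaging of the paper's orthogonality argument \cref{eq:ortho_Q_psi}, which instead expands $\hat{\mu}_{\mathrm{H}}(Q)-\mathbf{E}(P)(Q)$ coefficient by coefficient before bounding each term.
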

\begin{remark}[Discussion of assumption]
Note that the condition on the probability distribution $P$ holds in particular if the Lebesgue density $f$ belongs the class of $\ell$-times continuously differentiable functions, $C^\ell$, $\ell\in \mathbb N_0$. It is shown in \cite[Theorem 3.5]{Chazal_Divol_EPD} that $f \in C^{\ell}$ if the underlying dataset admits a density of class $C^{\ell}$ with respect to the Hausdorff measure. This means that whenever the data points are sampled in a smooth manner, $f$ will exhibit the same regularity. 
Also, note that the minimax rate cannot be faster than $1/\sqrt{N}$ irrespective of the regularity assumed on the EPD as previously mentioned; see \cref{rem:reg_minimax_rate}.
\end{remark}

To prove \cref{thm:minimax_haar}, we require the following property of the EPD, which is consequence of the linearity of expectation.
\begin{lemma}\label{lem:EPD}
Let $\mu \sim P$. Then, for every measurable function $g$,
\begin{align*}
    \mathbb E \left[\int_{\Omega_R} g\,\mathrm{d}\mu\right] = \int_{\Omega_R} g\,\mathrm{d}\mathbf{E}(P).
\end{align*}
\end{lemma}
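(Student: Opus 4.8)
The plan is to prove the identity by the standard measure-theoretic approximation argument, bootstrapping from indicator functions to general measurable $g$. The base case is simply the defining property of the expected persistence diagram: by \cref{def:EPD} we have $\mathbf{E}(P)(A) = \mathbb{E}[\mu(A)]$ for every Borel set $A \subset \Omega$. Since $\int_{\Omega_R} \mathbf{1}_A \, \mathrm{d}\mu = \mu(A \cap \Omega_R)$ and likewise $\int_{\Omega_R} \mathbf{1}_A \, \mathrm{d}\mathbf{E}(P) = \mathbf{E}(P)(A \cap \Omega_R)$, the claimed identity holds verbatim when $g = \mathbf{1}_A$.

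Next I would extend by linearity: for a nonnegative simple function $g = \sum_i c_i \mathbf{1}_{A_i}$, both sides are linear in $g$ and the expectation $\mathbb{E}$ is itself linear, so the simple-function case follows immediately from the indicator case. For a general nonnegative measurable $g$, choose an increasing sequence of simple functions $g_n \uparrow g$ and apply the monotone convergence theorem twice on each side: once to pass $g_n \uparrow g$ inside $\int \cdot \, \mathrm{d}\mu$ and $\int \cdot \, \mathrm{d}\mathbf{E}(P)$, and once to interchange the limit with the outer expectation $\mathbb{E}$. Finally, a general measurable $g$ is decomposed as $g = g^+ - g^-$ and the two nonnegative identities are subtracted, which is legitimate whenever the integrals are finite; in the intended application $g$ is a bounded, compactly supported wavelet function (one of the $\phi$ or $\psi^\alpha_{j,\ell}$) and $\mu(\Omega_R) < \infty$, so integrability is never at issue.

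The main obstacle is the interchange of the expectation operator $\mathbb{E}$ with the integral against the random measure $\mu$, which is really a Fubini/Tonelli statement for $\mu$ viewed as a random element of $\mathcal{M}^p$. To make this rigorous one must (i) confirm that for each fixed measurable $g$ the map $\omega \mapsto \int_{\Omega_R} g \, \mathrm{d}\mu_\omega$ is measurable on the underlying probability space, so that the outer expectation is well defined, and (ii) reconcile the Bochner-integral interpretation of $\mathbf{E}(P) = \mathbb{E}[\mu]$ in \cref{eq:epd_prob_dist} with the set-function definition $A \mapsto \mathbb{E}[\mu(A)]$ of \cref{def:EPD}; these coincide because evaluation against $\mathbf{1}_A$ is a continuous linear functional on $\mathcal{M}^p$ and therefore commutes with the Bochner integral. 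Granting that $(\omega,x) \mapsto g(x)$ is jointly measurable (automatic, since $g$ does not depend on $\omega$), Tonelli's theorem applies in the nonnegative case and licenses the two monotone-convergence interchanges above, after which the signed case follows by linearity.
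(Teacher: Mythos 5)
Your proposal is correct and follows essentially the same route as the paper: verify the identity on simple functions via linearity and the defining property $\mathbf{E}(P)(A)=\mathbb{E}[\mu(A)]$, then pass to general measurable $g$ by monotone convergence. You are in fact somewhat more careful than the paper's own proof, which omits the $g = g^+ - g^-$ decomposition and the measurability of $\omega \mapsto \int_{\Omega_R} g\,\mathrm{d}\mu_\omega$ that you rightly flag.
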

\begin{proof}
First, suppose that $g$ is a simple function, i.e., $g = \sum_{k=1}^n b_k \chi_{B_k}$, where $n\in \mathbb N$, $\{b_k\}_{k=1}^n \subset \mathbb R$, and where $\{B_k\}_{k=1}^n \subset \Omega_R$ is a sequence of Borel sets. We obtain
\begin{align}
    \mathbb E \left[\int_{\Omega_R} g\,\mathrm{d}\mu\right] &= \mathbb E \left[\int_{\Omega_R} \sum_{k=1}^n b_k \chi_{B_k}\,\mathrm{d}\mu\right] \nonumber \\
    &= \sum_{k=1}^n b_k \mathbb E \left[\int_{\Omega_R} \chi_{B_k}\,\mathrm{d}\mu\right] \label{eq:epd_lin} \\
    &= \sum_{k=1}^n b_k \mathbb E \left[\mu(B_k)\right] \nonumber\\
    &= \sum_{k=1}^n b_k \mathbb E \left[\mu\right](B_k) \label{eq:epd_def} \\
    &= \sum_{k=1}^n b_k \int_{\Omega_R} \chi_{B_k}\,\mathrm{d}\mathbf{E}(P) \label{eq:epd_def2} \\
    &= \int_{\Omega_R} \sum_{k=1}^n b_k\chi_{B_k}\,\mathrm{d}\mathbf{E}(P) \nonumber \\
    &=\int_{\Omega_R} g\,\mathrm{d}\mathbf{E}(P) \nonumber,
\end{align}
where \cref{eq:epd_lin} follows from linearity; \cref{eq:epd_def} holds by \cref{def:EPD}; and in \cref{eq:epd_def2}, we utilized \cref{eq:epd_prob_dist}.

Finally, using the monotone convergence theorem, the desired result can be established for any measurable function $g$.
\end{proof}

We are now ready to prove \cref{thm:minimax_haar}. The key in this proof is first to decompose the Lebesgue density, $f$, of $\mathbf{E}(P)$ according the multiscale partition illustrated in \cref{fig:multiscale_partition} and then to bound the distance between the estimator $\hat{f}$ and $f$ on each dyadic square. 
\begin{proof}[Proof of \cref{thm:minimax_haar}]
Let the Haar wavelet expansion of the Lebesgue density of $\mathbf{E}(P)$ be given by 
\begin{align}\label{eq:Haar_expansion_true_density}
    \left.f\right|_{A_k} = \sum_{\phi\in \mathcal{V}_{k+1}}\alpha_\phi\phi + \sum_{j\geq k+1} \sum_{\psi \in \mathcal{W}_j} \beta_{\psi} \psi, \quad k\in\mathbb N_0,
\end{align}
where $\alpha_{\phi}=\int_{\Omega_R} \phi\,\mathrm{d}\mathbf{E}(P)$ and $\beta_{\psi}=\int_{\Omega_R}\psi\,\mathrm{d}\mathbf{E}(P)$.

Apply \cref{lemma:OT_bound} to the measures $\hat{\mu}_\text{H}$ and $\mathbf{E}(P)$, and split the sum over $k$ into two parts to obtain
\begin{align}\label{eq:OTp_bound_mu_H_EPD}
    \mathrm{OT}_p^p (\hat{\mu}_\text{H}, \mathbf{E}(P)) \leq S_1 + S_2,
\end{align}
where $S_1$ contains the first $K+1$ terms and $S_2$ the remaining terms:
\begin{align}
\begin{split}\label{eq:S1}
    S_1 &\coloneqq 
    2^{p/2}R^p \sum_{k=0}^{K} 2^{-kp}\Bigg(2^{-Jp} (\hat{\mu}_\text{H}(A_k)\wedge\mathbf{E}(P)(A_k))+c_p\lvert \hat{\mu}_\text{H} (A_k)-\mathbf{E}(P)(A_k)\rvert \\&\quad+\sum_{j=1}^J 2^{-jp} \sum_{Q\in \mathcal{Q}_{k,j-1}}\lvert \hat{\mu}_\text{H}(Q) -\mathbf{E}(P)(Q) \rvert\Bigg),
\end{split}
\end{align}
\begin{align}
\begin{split}\label{eq:S2}
    S_2 &\coloneqq 2^{p/2}R^p \sum_{k\geq K+1} 2^{-kp}\Bigg(2^{-Jp} (\hat{\mu}_\text{H}(A_k)\wedge\mathbf{E}(P)(A_k))+c_p\lvert \hat{\mu}_\text{H} (A_k)-\mathbf{E}(P)(A_k)\rvert \\&\quad+\sum_{j=1}^J 2^{-jp} \sum_{Q\in \mathcal{Q}_{k,j-1}}\lvert \hat{\mu}_\text{H}(Q) -\mathbf{E}(P)(Q) \rvert\Bigg).
\end{split}
\end{align}
The proof may be divided into three steps. In Step 1 and Step 2,
we will estimate $S_1$ and $S_2$ in expectation, respectively. The desired result will be established in Step 3.
\paragraph{Step 1 (Estimation of $S_1$).} 
Let $k\in\{0,1,\dots,K\}$.
First observe that for $Q\in \mathcal{Q}_{k,j-1}$ with $j\geq 1$, $R^{-1}2^{k+j}\chi_Q\in \mathcal{V}_{k+j}$. Since $\mathcal{V}_{k+j} \cup \bigcup_{j'\geq k+j} \mathcal{W}_{j'}$ forms an orthonormal system in $L^2(\mathbb R^2)$, it holds that 
\begin{align}\label{eq:ortho_Q_psi}
    \int_Q \psi \,\mathrm{d}x = \int_{\mathbb R^2} \chi_Q \psi \,\mathrm{d}x = 0, \quad \text{for all $\psi \in \bigcup_{j'\geq k+j}\mathcal{W}_{j'}$}.
\end{align}
Therefore, we have, for $Q\in \mathcal{Q}_{k,j-1}$ with $2\leq j\leq J$,
\begin{align}
    \lvert \hat{\mu}_\text{H} (Q) - \mathbf{E}(P)(Q) \rvert
    &\quad= \left\lvert \sum_{\phi\in \mathcal{V}_{k+1}}\left(\hat{\alpha}_{\phi}-\alpha_\phi\right) \int_{Q}\phi\,\mathrm{d}x + \sum_{j'=k+1}^{k+j-1} \sum_{\psi\in\mathcal{W}_{j'}} (\hat{\beta}_\psi-\beta_\psi)\int_{Q}\psi\,\mathrm{d}x \right\rvert \label{eq:mu_Q_EP_Q_1}\\
    &\quad\leq \sum_{\phi\in \mathcal{V}_{k+1}}\left\lvert\hat{\alpha}_{\phi}-\alpha_\phi\right\rvert \int_{Q}\phi\,\mathrm{d}x + \sum_{j'=k+1}^{k+j-1} \sum_{\psi\in\mathcal{W}_{j'}} \left\lvert\hat{\beta}_\psi-\beta_\psi\right\rvert\int_{Q}\lvert\psi\rvert\,\mathrm{d}x \nonumber\\
    &\quad\leq R^{-1}\lvert Q\rvert \left(\sum_{\substack{\phi\in \mathcal{V}_{k+1} \\ \mathrm{supp}(\phi)\supseteq Q}}2^{k+1}\left\lvert\hat{\alpha}_{\phi}-\alpha_\phi\right\rvert + \sum_{j'=k+1}^{k+j-1} \sum_{\substack{\psi\in\mathcal{W}_{j'}\\\mathrm{supp}(\psi)\supseteq Q}}2^{j'} \left\lvert\hat{\beta}_\psi-\beta_\psi\right\rvert\right), \label{eq:mu_Q_EP_Q_2}
\end{align}
where we used in \cref{eq:mu_Q_EP_Q_1} the wavelet expansions \cref{eq:Haar_estimator,eq:Haar_expansion_true_density} as well as the orthogonality relation \cref{eq:ortho_Q_psi}; and where \cref{eq:mu_Q_EP_Q_2} holds because $\|\phi\|_{L^\infty(\mathbb R^2)}=R^{-1}2^{k+1}$ for $\phi \in \mathcal{V}_{k+1}$, $\|\psi\|_{L^\infty(\mathbb R^2)}=R^{-1}2^{j'}$ for $\psi \in \mathcal{W}_{j'}$, and $\lvert Q \rvert = \int_Q\,\mathrm{d}x$.   
Note that, for $Q\in\mathcal{Q}_{k,0}$, $R^{-1}2^{k+j}\chi_Q\in \mathcal{V}_{k+1}$ so that, by \cref{eq:ortho_Q_psi}, $\int_Q \psi\,\mathrm{d}x=0$ for all $\psi \in \mathcal{W}_{j'}$ with $j'\geq k+1$. Thus, whenever $Q\in\mathcal{Q}_{k,0}$, it holds that 
\begin{align*}
    \lvert \hat{\mu}_\text{H} (Q) - \mathbf{E}(P)(Q) \rvert &= \left\lvert \sum_{\phi\in \mathcal{V}_{k+1}}\left(\hat{\alpha}_{\phi}-\alpha_\phi\right) \int_{Q}\phi\,\mathrm{d}x\right\rvert 
    \leq R^{-1}2^{k+1}\lvert Q\rvert\sum_{\substack{\phi\in \mathcal{V}_{k+1} \\ \mathrm{supp}(\phi)\supseteq Q}}\left\lvert\hat{\alpha}_{\phi}-\alpha_\phi\right\rvert.
\end{align*}
Next, we will derive an upper bound for the expected error of the wavelet coefficients. To this end, let $\mu\sim P$. Because $\{\mu_i\}_{i=1}^N\sim P$ are i.i.d., we obtain, using \cref{lem:EPD}, the following identity:
\begin{align*}
    \mathbb E \left[\left(\hat{\alpha}_\phi-\alpha_\phi\right)^2\right]
    &= \mathbb E \left[\left( \frac 1N \sum_{i=1}^N \int_{\Omega_R} \phi\,\mathrm{d}\mu_i -\int_{\Omega_R} \phi \,\mathrm{d}\mathbf{E}(P)\right)^2\right] \\
    &= \mathbb E \left[ \frac{1}{N^2} \sum_{i=1}^N \sum_{j=1}^N \int_{\Omega_R} \phi\,\mathrm{d}\mu_i \int_{\Omega_R} \phi\,\mathrm{d}\mu_j - \frac{2}{N} \sum_{i=1}^N \int_{\Omega_R} \phi\,\mathrm{d}\mu_i \int_{\Omega_R} \phi \,\mathrm{d}\mathbf{E}(P)+\left(\int_{\Omega_R} \phi \,\mathrm{d}\mathbf{E}(P)\right)^2 \right] \\
    &=\frac 1N \mathbb E \left[\left(\int_{\Omega_R}\phi\,\mathrm{d}\mu\right)^2\right]+\frac{N-1}{N}\left(\int_{\Omega_R} \phi\,\mathrm{d}\mathbf{E}(P)\right)^2 - \left(\int_{\Omega_R} \phi \,\mathrm{d}\mathbf{E}(P)\right)^2 \\
    &= \frac 1N \left(\mathbb E \left[\left(\int_{\Omega_R}\phi\,\mathrm{d}\mu\right)^2\right]-\left(\int_{\Omega_R} \phi\,\mathrm{d}\mathbf{E}(P)\right)^2\right).
\end{align*}
Applying Jensen's inequality yields
\begin{align}\label{eq:alpha_coeff}
    \mathbb E \left[\left\lvert\hat{\alpha}_\phi-\alpha_\phi\right\rvert\right] &\leq \sqrt{\frac{\mathrm{Var}(\int_{\Omega_R} \phi\,\mathrm{d}\mu)}{N}} \leq \sqrt{\frac{\mathbb E \left[\left(\int_{\Omega_R}\phi\,\mathrm{d}\mu\right)^2\right]}{N}}.
\end{align}
Likewise,
\begin{align}\label{eq:beta_coeff}
    \mathbb E \left[\left\lvert\hat{\beta}_\psi-\beta_\psi\right\rvert\right] &\leq \sqrt{\frac{\mathrm{Var}(\int_{\Omega_R} \psi\,\mathrm{d}\mu)}{N}} \leq \sqrt{\frac{\mathbb E \left[\left(\int_{\Omega_R}\psi\,\mathrm{d}\mu\right)^2\right]}{N}}.
\end{align}
To bound the second moment $\mathbb E \left[\left(\int_{\Omega_R}\phi\,\mathrm{d}\mu\right)^2\right]$, we may assume $\mu(\mathrm{supp}(\phi))>0$ almost surely; indeed, otherwise $\mathbb E \left[\left(\int_{\Omega_R}\phi\,\mathrm{d}\mu\right)^2\right]=0$. We then get: 
\allowdisplaybreaks
\begin{align}
    \mathbb E \left[\left(\int_{\Omega_R}\phi\,\mathrm{d}\mu\right)^2\right] &= \mathbb E \left[\left(\int_{\mathrm{supp}(\phi)}\phi\,\mathrm{d}\mu\right)^2\right] \nonumber\\
    &= \mathbb E \left[\mu(\mathrm{supp(\phi)})^2 \left(\int_{\mathrm{supp}(\phi)}\phi\,\frac{\mathrm{d}\mu}{\mu(\mathrm{supp(\phi)})}\right)^2 \right] \nonumber\\
    &\leq \mathbb E \left[\mu(\mathrm{supp(\phi)})^2\int_{\mathrm{supp}(\phi)}\phi^2\,\frac{\mathrm{d}\mu}{\mu(\mathrm{supp(\phi)})}\right] \label{eq:2_moment_phi_1}\\
    &\leq MR^{-s}2^{ks} \mathbb E\left[\int_{\mathrm{supp}(\phi)}\phi^2\, \mathrm{d}\mu\right] \label{eq:2_moment_phi_2}\\
    &= MR^{-s}2^{ks}\int_{\mathrm{supp}(\phi)}\phi^2\, \mathrm{d}\mathbf{E}(P) \label{eq:2_moment_lem}\\
    &=MR^{-s}2^{ks}\int_{\mathrm{supp}(\phi)}\phi^2 f\,\mathrm{d}x \nonumber\\
    &\leq MR^{-s}2^{ks}\|f\|_{L^\infty(A_k)} \int_{\mathrm{supp}(\phi)}\phi^2\,\mathrm{d}x \nonumber\\
    &= MR^{-s}2^{ks}\|f\|_{L^\infty(A_k)} \nonumber \\
    &\leq \left(CMR^{-s}2^{ks}\right)^2 \label{eq:2_moment_phi_3},
\end{align}
where \cref{eq:2_moment_phi_1} follows from Jensen's inequality; in \cref{eq:2_moment_phi_2} we used that $\mathrm{supp}(\phi)\subset A_k$ and the following fact stated in \cite[(A.5)]{divol2021estimation}: for any $\mu\in \mathcal{M}^s_{M,R}$,
\begin{align}\label{eq:bound_any_mu}
    \mu(B)=\int_{B} \frac{\mathrm{dist}(x,\partial\Omega)^s}{\mathrm{dist}(x,\partial\Omega)^s}\,\mathrm{d}\mu(x) \leq MR^{-s}2^{ks}, \quad \text{for all $B\subseteq A_k$;}
\end{align}
the identity \cref{eq:2_moment_lem} is due to \cref{lem:EPD}; and where \cref{eq:2_moment_phi_3} holds based on our assumption that $\|f\|_{L^\infty(A_k)}\leq C^2MR^{-s}2^{ks}$.
Likewise,
\begin{align}\label{eq:2_moment_psi}
    \mathbb E \left[\left(\int_{\Omega_R}\psi\,\mathrm{d}\mu\right)^2\right] \leq \left(CMR^{-s}2^{ks}\right)^2. 
\end{align}
Now observe that $\mathrm{card}\left(\{ \phi\in\mathcal{V}_{k+1} \colon \mathrm{supp}(\phi)\supseteq Q \}\right) =1$ for all $Q\in \mathcal{Q}_{k,j-1}$ with $j\geq 1$. Thanks to \cref{eq:alpha_coeff,eq:2_moment_phi_3}, we have
\begin{align}\label{eq:sum_alpha}
    \sum_{\substack{\phi\in \mathcal{V}_{k+1} \\ \mathrm{supp}(\phi)\supseteq Q}}2^{k+1}\mathbb E \left[\left\lvert\hat{\alpha}_{\phi}-\alpha_\phi\right\rvert\right]
    \leq \sum_{\substack{\phi\in \mathcal{V}_{k+1} \\ \mathrm{supp}(\phi)\supseteq Q}} 2^{k+1} \sqrt{\frac{\mathbb E \left[\left(\int_{\Omega_R}\phi\,\mathrm{d}\mu\right)^2\right]}{N}} 
    \leq \frac{CMR^{-s}2^{k(s+1)+1}}{\sqrt{N}}.
\end{align}
Similarly, since $\mathrm{card}\left(\{ \psi\in\mathcal{W}_{j'} \colon \mathrm{supp}(\psi)\supseteq Q \}\right)= 3$ (see \cref{eq:psi_abc}), it follows from \cref{eq:beta_coeff,eq:2_moment_psi} that
\begin{align*}
    \sum_{\substack{\psi\in\mathcal{W}_{j'} \\ \mathrm{supp}(\psi)\supseteq Q }} \mathbb E \left[\left\lvert\hat{\beta}_\psi-\beta_\psi\right\rvert\right] \leq \sum_{\substack{\psi\in\mathcal{W}_{j'} \\ \mathrm{supp}(\psi)\supseteq Q }} \sqrt{\frac{\mathbb E \left[\left(\int_{\Omega_R}\psi\,\mathrm{d}\mu\right)^2\right]}{N}} 
    \leq \frac{3CMR^{-s}2^{ks}}{\sqrt{N}}.
\end{align*}
Using $\sum_{j'=k+1}^{k+j-1} 2^{j'}\leq 2^{k+j}$, we obtain
\begin{align}\label{eq:sum_beta}
    \sum_{j'=k+1}^{k+j-1} \sum_{\substack{\psi\in\mathcal{W}_{j'}\\\mathrm{supp}(\psi)\supseteq Q }}2^{j'} \mathbb E\left[ \left\lvert\hat{\beta}_\psi-\beta_\psi\right\rvert\right] \leq \frac{3CMR^{-s}2^{k(s+1)+j}}{\sqrt{N}}.
\end{align}
Substituting \cref{eq:sum_alpha,eq:sum_beta} into \cref{eq:mu_Q_EP_Q_2} yields
\begin{align}\label{eq:mu_Q_EP_Q_est}
    \mathbb E \left[\lvert \hat{\mu}_\text{H} (Q) - \mathbf{E}(P)(Q) \rvert\right] &\leq \frac{\lvert Q\rvert }{\sqrt{N}} CMR^{-s-1}2^{k(s+1)+1}\left(1+3\cdot 2^{j-1}\right).
\end{align}
Since $\mathcal{Q}_{k,j-1}$ is a partition of $A_k$ and $|A_k|=R^2 2^{-(k+1)}$, we get, by summing \cref{eq:mu_Q_EP_Q_est} over all $Q\in \mathcal{Q}_{k,j-1}$, for $j\geq 2$, 
\begin{align}\label{eq:bound_mu_EPD_Qj2}
    \sum_{Q\in \mathcal{Q}_{k,j-1}} \mathbb E \left[\lvert \hat{\mu}_\text{H}(Q) -\mathbf{E}(P)(Q) \rvert \right] &\leq \frac{\lvert A_k\rvert }{\sqrt{N}} CMR^{-s-1}2^{k(s+1)+1}\left(1+3\cdot 2^{j-1}\right) \nonumber \\
    &=\frac{1}{\sqrt{N}} CMR^{-s+1}2^{ks}\left(1+3\cdot 2^{j-1}\right).
\end{align}
When $j=1$, 
\begin{align}\label{eq:bound_mu_EPD_Q1}
    \sum_{Q\in \mathcal{Q}_{k,j-1}} \mathbb E \left[\lvert \hat{\mu}_\text{H}(Q) -\mathbf{E}(P)(Q) \rvert \right] &\leq \frac{\lvert A_k\rvert }{\sqrt{N}} CMR^{-s-1}2^{k(s+1)+1}
    =\frac{1}{\sqrt{N}} CMR^{-s+1}2^{ks}.
\end{align}
Moreover, since 
\begin{align*}
    \left\lvert\hat{\mu}_\text{H}(A_k)-\mathbf{E}(P)(A_k)\right\rvert = \left\lvert \sum_{Q\in\mathcal{Q}_{k,0}}\hat{\mu}_\text{H}(Q)-\mathbf{E}(P)(Q) \right\rvert,
\end{align*}
it follows from \cref{eq:bound_mu_EPD_Q1} that
\begin{align}\label{eq:bound_mu_EPD_Ak}
    \mathbb E \left[\left\lvert\hat{\mu}_\text{H}(A_k)-\mathbf{E}(P)(A_k)\right\rvert \right] 
    \leq \frac{1}{\sqrt{N}} CMR^{-s+1}2^{ks}.
\end{align}
Thanks to \cref{eq:bound_any_mu}, we also have
\begin{align}\label{eq:bound_min_mu_EPD}
    \hat{\mu}_\text{H}(A_k) \wedge \mathbf{E}(P)(A_k) \leq MR^{-s}2^{ks}.
\end{align}
Using the estimates \cref{eq:bound_mu_EPD_Qj2,eq:bound_mu_EPD_Q1,eq:bound_mu_EPD_Ak,eq:bound_min_mu_EPD} we can bound $S_1$, defined in \cref{eq:S1}, in expectation:
\begin{align}
    \mathbb E \left[S_1\right] &\leq 2^{p/2}MR^{p-s+1} \sum_{k=0}^{K} 2^{-k(p-s)}\left(2^{-Jp}+c_p\frac{C}{\sqrt{N}}+\sum_{j=1}^J 2^{-jp} \left(\frac{C\left(1+3\cdot 2^{j-1}\right)}{\sqrt{N}}\right)\right) \nonumber\\
    &\leq 2^{p/2}MR^{p-s+1} \sum_{k=0}^{K} 2^{-k(p-s)}\left(2^{-Jp}+c_p\frac{C}{\sqrt{N}}+\frac{2C}{\sqrt{N}}\sum_{j=1}^J 2^{-j(p-1)}\right). \label{eq:sum_kleqK}
\end{align}

\paragraph{Step 2 (Estimation of $S_2$).} 
Let $k\geq K+1$.
Note that, by definition, $\hat{\mu}_\text{H}(B)=0$ for any $B\subseteq A_k$. 
Thanks to \cref{eq:bound_any_mu}, we get
\begin{align}
    \sum_{Q\in \mathcal{Q}_{k,j-1}}\lvert \hat{\mu}_\text{H}(Q) -\mathbf{E}(P)(Q) \rvert &= \sum_{Q\in \mathcal{Q}_{k,j-1}}\mathbf{E}(P)(Q) = \mathbf{E}(P)(A_k) \leq  MR^{-s}2^{ks}, \label{eq:est_kgeqK+1_1} \\
    \left\lvert\hat{\mu}_\text{H}(A_k)-\mathbf{E}(P)(A_k)\right\rvert &\leq MR^{-s}2^{ks}, \label{eq:est_kgeqK+1_2}\\
    \intertext{and }
    \hat{\mu}_\text{H}(A_k) \wedge \mathbf{E}(P)(A_k) &= 0. \label{eq:est_kgeqK+1_3}
\end{align}
Hence, substituting the estimates \cref{eq:est_kgeqK+1_1,eq:est_kgeqK+1_2,eq:est_kgeqK+1_3} into \cref{eq:S2} gives
\begin{align}
    S_2 &\leq 2^{p/2}R^p \sum_{k\geq K+1}2^{-kp}\left(c_p MR^{-s}2^{ks}+\sum_{j=1}^J 2^{-jp} MR^{-s}2^{ks}\right) \nonumber\\
    &\leq 2^{p/2}\left(c_p + 1/(2^p-1)\right)MR^{p-s} \sum_{k\geq K+1}2^{-k(p-s)} \nonumber\\
    &= {c}'_pMR^{p-s} \sum_{k\geq K+1}2^{-k(p-s)} \nonumber\\
    &= {c}'_pMR^{p-s} \frac{2^{-(p-s)K}}{2^{p-s}-1}, \label{eq:sum_kgeqK+1_bound}
\end{align}
where $c'_p\coloneqq2^{p/2}\left(c_p + 1/(2^p-1)\right)=(2^{p/2}+2^p)/(2^p-1)$.

\paragraph{Step 3.} 
Finally, combining \cref{eq:OTp_bound_mu_H_EPD,eq:sum_kleqK,eq:sum_kgeqK+1_bound} results in  
\begin{align*}
    &\mathbb E \left[\mathrm{OT}_p^p(\hat{\mu}_\text{H},\mathbf{E}(P))\right] \\ &\quad\leq 2^{p/2}MR^{p-s+1} \sum_{k=0}^{K} 2^{-k(p-s)}\left(2^{-Jp}+c_p\frac{C}{\sqrt{N}}+\frac{2C}{\sqrt{N}}\sum_{j=1}^J 2^{-j(p-1)}\right) + c'_pMR^{p-s} \frac{2^{-(p-s)K}}{2^{p-s}-1}.
\end{align*}
Setting $K=\lceil \log_2(N) \rceil$ and $J=\lceil \log_2(N) \rceil$ yields the desired estimate
\begin{align*}
    \mathbb E \left[\mathrm{OT}_p^p(\hat{\mu}_\text{H},\mathbf{E}(P))\right] \leq c_{p,s}MR^{p-s}\left(\frac{R}{N^p}+\frac{CRa_p(N)}{\sqrt{N}}+\frac{1}{N^{p-s}}\right),
\end{align*}
where $c_{p,s}$ is a constant depending only on $p$ and $s$.
\end{proof}

\Cref{thm:minimax_haar} also provides an estimate of the optimal rate of convergence of the Haar wavelet estimator, defined up to constants and variable factors depending on $s$ and $p$ which stay bounded.

\paragraph{Thresholding Haar Wavelet Estimator.}
We now turn to our practical contribution, which entails a class of nonparametric density estimators and uses a Haar wavelet basis that discards small coefficients by introducing a threshold below which coefficients are set to zero (hard thresholding). The threshold is set following the procedure proposed in \citep{Donoho_wavelet_est}, where it was designed to attain exactly or approximately optimal convergence rates.

Consider the following hard thresholding technique: fix $\tau >0$ and let
\begin{align*}
    \tau_j \coloneqq  CMR^{-s}\tau 2^{j/p}\frac{j}{\sqrt{N}}.
\end{align*}
For any $\psi \in \mathcal{W}_j$, consider
\begin{align*}
    \tilde{\beta}_\psi \coloneqq \begin{cases}\hat{\beta}_\psi &\text{if $\lvert \hat{\beta}_\psi\rvert>\tau_j$,} \\ 0 &\text{otherwise}.
    \end{cases}
\end{align*}
Construct the estimator $\tilde{f}$ according to
\begin{align*}
    \left.\tilde{f}\right|_{A_k} = \sum_{\phi\in \mathcal{V}_{k+1}}\hat{\alpha}_\phi\phi + \sum_{j=k+1}^{J+K} \sum_{\psi \in \mathcal{W}_j} \tilde{\beta}_{\psi} \psi, \quad k\in\{0,1,\dots,K\},
\end{align*}
and $\left.\tilde{f}\right|_{A_k}=0$, $k\geq K+1$. The function $\tilde{f}$ represents the thresholding Haar wavelet estimator as first proposed by \cite{Donoho_wavelet_est}. Let $\tilde\mu_{\text{H}}$ be the measure whose Lebesgue density is $\tilde{f}$.

\begin{corollary}[Near-optimal minimaxity of the thresholding Haar wavelet estimator]
\label{cor:thr}
Under the assumptions of \cref{thm:minimax_haar}, we have that
\begin{align*}
    \mathbb E \left[\mathrm{OT}_p^p(\tilde{\mu}_\mathrm{H},\mathbf{E}(P))\right] \leq \tilde c_{p,s}MR^{p-s}\left(\frac{R}{N^p}+\frac{CRa_p(N)}{\sqrt{N}}+\frac{CR\tau\log_2(N)}{\sqrt{N}}+\frac{1}{N^{p-s}}\right),
\end{align*}
where $\tilde c_{p,s}$ is a constant depending only on $p$ and $s$, and where
\begin{align*}
    a_p(N)\coloneqq \begin{cases} \log_2(N) & \text{if $p=1$,} \\
    1 & \text{otherwise.}
    \end{cases}
\end{align*}
\end{corollary}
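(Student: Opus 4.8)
The plan is to follow the architecture of the proof of \cref{thm:minimax_haar} verbatim wherever the two estimators agree, and to isolate the single place where hard thresholding enters. Concretely, I would apply \cref{lemma:OT_bound} to the pair $(\tilde\mu_\text{H},\mathbf E(P))$ and again split the outer sum over $k$ into $S_1$ (the annuli $0\le k\le K$) and $S_2$ (the annuli $k\ge K+1$). Since $\tilde f|_{A_k}=0$ for $k\ge K+1$ exactly as $\hat f|_{A_k}=0$, the term $S_2$ is literally identical to the one in \cref{thm:minimax_haar} and contributes the $1/N^{p-s}$ summand. Moreover, because only the detail coefficients are altered (the scaling coefficients $\hat\alpha_\phi$ are untouched) and because $\int_Q\psi=0$ for every detail $\psi$ once $Q\in\mathcal Q_{k,0}$, the quantities $\tilde\mu_\text{H}(A_k)$ and $\tilde\mu_\text{H}(Q)$ for $Q\in\mathcal Q_{k,0}$ coincide with their hatted counterparts. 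Hence the only new work is to re-bound, for $Q\in\mathcal Q_{k,j-1}$ with $2\le j\le J$, the detail part of $|\tilde\mu_\text{H}(Q)-\mathbf E(P)(Q)|$; that is, to replace the control of $\mathbb E[|\hat\beta_\psi-\beta_\psi|]$ used in \cref{eq:sum_beta} by control of $\mathbb E[|\tilde\beta_\psi-\beta_\psi|]$.

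A warning I would heed at the outset: one should \emph{not} route through the triangle inequality $\mathrm{OT}_p(\tilde\mu_\text{H},\mathbf E(P))\le\mathrm{OT}_p(\tilde\mu_\text{H},\hat\mu_\text{H})+\mathrm{OT}_p(\hat\mu_\text{H},\mathbf E(P))$ and bound $\mathrm{OT}_p^p(\tilde\mu_\text{H},\hat\mu_\text{H})$ by the deterministic estimate $|\tilde\beta_\psi-\hat\beta_\psi|\le\tau_{j'}$. Summing the weights $2^{j'}\tau_{j'}$ across the $O(\log_2 N)$ scales makes that estimate blow up polynomially in $N$ (the threshold grows like $2^{j'/p}$), precisely because it forgets that a coefficient is killed only when its true value $\beta_\psi$ is itself small. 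I would instead bound $\mathrm{OT}_p^p(\tilde\mu_\text{H},\mathbf E(P))$ directly through \cref{lemma:OT_bound} and treat $\mathbb E[|\tilde\beta_\psi-\beta_\psi|]$ with the standard thresholding decomposition.

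The heart of the argument is this per-coefficient estimate. Writing $\tilde\beta_\psi-\beta_\psi=(\hat\beta_\psi-\beta_\psi)\mathbbm 1_{|\hat\beta_\psi|>\tau_{j'}}-\beta_\psi\mathbbm 1_{|\hat\beta_\psi|\le\tau_{j'}}$ and splitting according to whether $|\beta_\psi|$ lies above or below a fixed multiple of $\tau_{j'}$, one reaches the familiar bound
\[
\mathbb E[|\tilde\beta_\psi-\beta_\psi|]\lesssim \min(|\beta_\psi|,\tau_{j'})+\sqrt{\tfrac{\mathrm{Var}\left(\int_{\Omega_R}\psi\,\mathrm{d}\mu\right)}{N}}\;\mathbbm 1_{|\beta_\psi|>\tau_{j'}/2}+|\beta_\psi|\,\mathbb P\!\left(|\hat\beta_\psi-\beta_\psi|>\tfrac{\tau_{j'}}{2}\right).
\]
The variance is already controlled by \cref{eq:2_moment_psi}, and the deviation probability is handled by a tail inequality for the i.i.d.\ average $\hat\beta_\psi=\tfrac1N\sum_i\int_{\Omega_R}\psi\,\mathrm{d}\mu_i$: a second-moment (Chebyshev) bound suffices here, since its summands are bounded because the mass of each $\mu_i$ on $\mathrm{supp}(\psi)\subset A_k$ is controlled by \cref{eq:bound_any_mu}, with a Bernstein bound giving a cleaner decay if needed. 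The scale-dependent threshold $\tau_{j'}=CMR^{-s}\tau 2^{j'/p}j'/\sqrt N$ is calibrated exactly so that these deviation terms are summable over scales and absorb into the constant. For the leading $\min(|\beta_\psi|,\tau_{j'})$ term I would use the elementary size bound $|\beta_\psi|\le\|\psi\|_{L^1}\|f\|_{L^\infty(A_k)}\lesssim R2^{-j'}MR^{-s}2^{ks}$, which decays in the global scale $j'$; this lets me take $\min=\tau_{j'}$ on the coarse scales (up to a crossover of order $\log_2 N$) and $\min=|\beta_\psi|$ beyond it, so that neither half of the scale sum diverges.

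Finally I would reassemble: feeding these bounds into \cref{lemma:OT_bound} and summing over $\psi\in\mathcal W_{j'}$, then over the local levels $j$ (weighted by $2^{-jp}$), the annuli $0\le k\le K$ (weighted by $2^{-kp}$), and the $O(\log_2 N)$ global scales, the noise pieces reproduce the same order as the $\frac{R}{N^p}+\frac{CRa_p(N)}{\sqrt N}$ part of \cref{thm:minimax_haar} (the thresholded estimator only retains a subset of coefficients, so this contribution can only shrink), while the accumulated threshold bias produces the single extra summand $\frac{CR\tau\log_2 N}{\sqrt N}$, the $\log_2 N$ arising jointly from the $j'$-factor in $\tau_{j'}$ and from the number of dyadic scales. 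Setting $K=J=\lceil\log_2 N\rceil$ then yields the stated estimate. The main obstacle, and the only genuinely new ingredient beyond \cref{thm:minimax_haar}, is this thresholding step: securing a tail bound sharp enough that the deviation events stay negligible uniformly across all $O(\log N)$ scales, and balancing $\min(|\beta_\psi|,\tau_{j'})$ so that exactly one factor of $\log_2 N$ (times $\tau$) survives. The remaining multi-index summation over $(k,j,j',Q)$ is routine bookkeeping of the kind already carried out in \cref{thm:minimax_haar}.
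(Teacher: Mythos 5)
Your skeleton (reuse the $S_1$/$S_2$ split from \cref{thm:minimax_haar}, note that only the detail coefficients change, reduce everything to controlling $\mathbb E[|\tilde\beta_\psi-\beta_\psi|]$) is exactly the paper's, but at the one step that matters you take a genuinely different and much heavier route --- and you explicitly reject the route the paper actually takes. The paper's entire proof is the coefficient-level triangle inequality $\mathbb E[|\tilde\beta_\psi-\beta_\psi|]\le\mathbb E[|\tilde\beta_\psi-\hat\beta_\psi|]+\mathbb E[|\hat\beta_\psi-\beta_\psi|]$ combined with the deterministic bound $|\tilde\beta_\psi-\hat\beta_\psi|=|\hat\beta_\psi|\mathbbm{1}_{\{|\hat\beta_\psi|\le\tau_{j'}\}}\le\tau_{j'}$; the accumulated thresholds $\sum_{j'}2^{j'}\tau_{j'}$ are then fed into \cref{eq:mu_Q_EP_Q_2} in place of \cref{eq:sum_beta} and the Step 1 bookkeeping is rerun verbatim, which is precisely where the single extra summand $CR\tau\log_2(N)/\sqrt N$ comes from. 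Your objection to this route is not groundless --- with $\tau_{j'}$ growing like $2^{j'/p}j'$ the products $2^{j'}\tau_{j'}$ grow geometrically, and whether the $2^{-jp}$ weights of \cref{lemma:OT_bound} tame the resulting sum hinges on the precise calibration of $\tau_j$ (the paper's intermediate estimate $\sum_{j'=k+1}^{k+j-1}2^{j'}\tau_{j'}\le CMR^{-s}\tau\,2^{(k+j)/p}(j+k)/\sqrt N$ is what keeps the subsequent $j$- and $k$-sums under control) --- but the conclusion you draw, that one must run the full Donoho--Johnstone oracle decomposition with $\min(|\beta_\psi|,\tau_{j'})$, a scale-by-scale crossover, and Chebyshev or Bernstein tail bounds, is far more machinery than the corollary needs: the statement only claims \emph{near}-optimality, i.e.\ it happily absorbs the extra $\tau\log_2(N)$ factor that the crude deterministic bias bound produces, so no deviation probabilities ever have to be estimated. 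Your alternative is sound in principle (it is the classical sharper analysis and could only improve matters on fine scales where $\beta_\psi$ is small), but you leave its genuinely delicate ingredient --- a tail bound uniform over the $O(\log_2 N)$ scales, and the balancing of $\min(|\beta_\psi|,\tau_{j'})$ against the $\mathrm{OT}_p$-specific weights --- as an acknowledged obstacle rather than carrying it out, whereas the paper's argument closes in a few lines precisely because it never needs either.
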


\begin{proof}
Let $\psi\in \mathcal{W}_{j'}$. Using the triangle inequality, we compute
\begin{align*}
    \mathbb E \left[\lvert \tilde\beta_\psi -\beta_\psi \rvert\right] &\leq  \mathbb E \left[\lvert \tilde \beta_\psi - \hat\beta_\psi \rvert + \lvert \hat\beta_\psi - \beta_\psi \rvert \right] \\
    &= \mathbb E \left[\lvert\hat{\beta}_\psi\rvert\mathbbm{1}_{\{\lvert \hat{\beta}_\psi\rvert\leq\tau_{j'}\}}+\lvert \beta_\psi - \hat\beta_\psi \rvert \right] \\
    &\leq \tau_{j'}+ \mathbb E \left[\lvert \beta_\psi - \hat\beta_\psi \rvert\right].
\end{align*}
Thus, 
\begin{align}
    \sum_{j'=k+1}^{k+j-1} \sum_{\substack{\psi\in\mathcal{W}_{j'}\\\mathrm{supp}(\psi)\supseteq Q }}2^{j'} \mathbb E\left[ \left\lvert\tilde{\beta}_\psi-\beta_\psi\right\rvert\right] 
    &\leq \sum_{j'=k+1}^{k+j-1} \sum_{\substack{\psi\in\mathcal{W}_{j'}\\\mathrm{supp}(\psi)\supseteq Q }}2^{j'} \left(\mathbb E \left[\lvert \beta_\psi - \hat\beta_\psi \rvert\right] + \tau_{j'}\right) \nonumber\\
    &\leq \frac{3CMR^{-s}2^{k(s+1)+j}}{\sqrt{N}} + 3 \sum_{j'=k+1}^{k+j-1} 2^{j'}\tau_{j'},\label{eq:sum_beta_thr}
\end{align}
where we used \cref{eq:sum_beta} and $\mathrm{card}\left(\{ \psi\in\mathcal{W}_{j'} \colon \mathrm{supp}(\psi)\supseteq Q \} \right)= 3$ in the last inequality.
Computing
\begin{align*}
    \sum_{j'=k+1}^{k+j-1} 2^{j'}\tau_{j'} &\leq CMR^{-s}\tau 2^{(k+j)/p} \frac{j+k}{\sqrt{N}}
\end{align*}
and substituting this into \cref{eq:sum_beta_thr}, we get
\begin{align}\label{eq:sum_beta_thr2}
    \sum_{j'=k+1}^{k+j-1} \sum_{\substack{\psi\in\mathcal{W}_{j'}\\\mathrm{supp}(\psi)\supseteq Q }}2^{j'} \mathbb E\left[ \left\lvert\tilde{\beta}_\psi-\beta_\psi\right\rvert\right] 
    &\leq \frac{3CMR^{-s}\left(2^{k(s+1)+j}+\tau(k+j)2^{(j+k)/p}\right)}{\sqrt{N}}.
\end{align}
The remainder of this proof follows exactly the same steps as in the proof of \cref{thm:minimax_haar}, where we replace $\hat \mu_\mathrm{H}$ by $\tilde \mu_\mathrm{H}$ and where \cref{eq:sum_beta_thr2} plays the role of \cref{eq:sum_beta}.
\end{proof}
Thus, the thresholding Haar wavelet estimator achieves near-optimal rates. Because of the truncation, only a few wavelet coefficients are nonzero and contribute to the estimator $\tilde{f}$. Therefore, $\tilde{f}$ can be considered as a \emph{sparse} representation of the expected persistence diagram $\mathbf{E}(P)$. This is in contrast to the empirical mean $\bar{\mu}_N$ whose support tends to be very large. The sparsity of the thresholding Haar wavelet estimator has been leveraged for data compression in several contexts \citep{krommweh2010,fryzlewicz2016} and may be more practical in TDA applications where the use of $\bar{\mu}_N$ may be restrictive due to  its large support.

\section{Numerical Experiments}
\label{sec:simulations}
We will now verify our theoretical results established in \cref{sec:minimax} by numerical experiments and illustrate the practical use of the presented estimation techniques in a classification problem in the context of dynamical systems. Here, persistent homology is computed using the \texttt{GUDHI} library \citep{gudhi:urm} as well as the \texttt{giotto-tda} library \citep{tauzin2020giottotda} for \texttt{Python}. The implementation of the optimal partial transport metric is based on tools from the \texttt{POT} library \citep{flamary2021pot} for \texttt{Python}. 

\subsection{Convergence Rates: Three Examples}
To investigate the convergence rates of the aforementioned estimators, we need to compute $\mathrm{OT}_p(\hat{\mu}_N,\mathbf{E}(P))$, where $\hat{\mu}_N$ is an arbitrary estimator. Note, however, that the (true) expected persistence diagram cannot be computed explicitly in general. A closed-form solution of the expected persistence diagram is known for only a few examples \citep{divol2021estimation}. To deal with this issue, we approximate $\mathrm{OT}_p(\hat{\mu}_N,\mathbf{E}(P))$ by $\mathrm{OT}_p(\hat{\mu}_N,\Bar{\mu}_M)$, where $\Bar{\mu}_M = \frac{1}{M} \sum_{i=1}^M \mu_{i}$, and where $M$ is significantly larger than $N$, assuming we have access to $M$ samples.

We study three datasets, the torus and double torus, as well as a clustered process.

\paragraph{Torus.} 
We observe the point clouds $\{X_i\}_{i=1}^M$, $M=10,000$, consisting of $n=1000$ points which are sampled from a torus with inner radius $0.5$, outer radius $2$ perturbed by additive Gaussian noise with variance $0.5$. We compute the persistence diagrams $\{\mu_i\}_{i=1}^M$. 

The convergence rates of the Haar wavelet estimator $\hat \mu_{\mathrm{H}}$ are shown in \cref{fig:minimax_Haar_torus} for $p~\in \{1,2,3,4\}$, where $\bar{\mu}_N$ is based on $N$ samples from $\{\mu_i\}_{i=1}^M$, and where $N$ takes values between $10$ and $500$. We obtain the convergence rates predicted by \cref{thm:minimax_haar}, namely $1/\sqrt{N}$ for $p>1$ and $\log_2(N)/\sqrt{N}$ for $p=1$. 

\begin{figure}[t!]
	\centering
        \resizebox{0.8\textwidth}{!}{
	\input{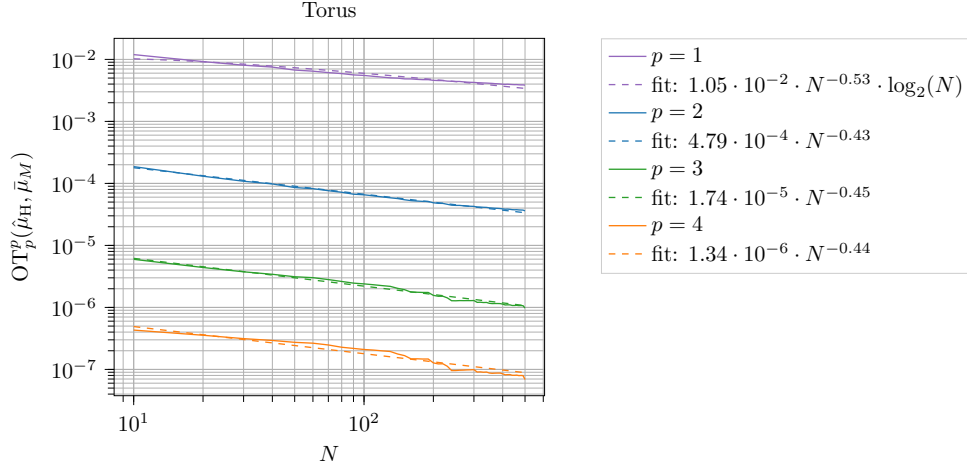}
        }
	\caption{Convergence rates of the Haar wavelet estimator $\hat{\mu}_\mathrm{H}$, averaged over 10 iterations, for different sample sizes $N$ ranging from $10$ to $500$. We take $N\mapsto a\cdot N^{-b}\cdot \log_2(N)$ as a model function (dashed line) for $p=1$ and $N\mapsto a\cdot N^{-b}$ for $p>1$, where $a,b\in \mathbb R$ are the parameters to fit.}
\label{fig:minimax_Haar_torus}
\end{figure}
\begin{figure}[t!]
	\centering
        \resizebox{0.8\textwidth}{!}{
	\input{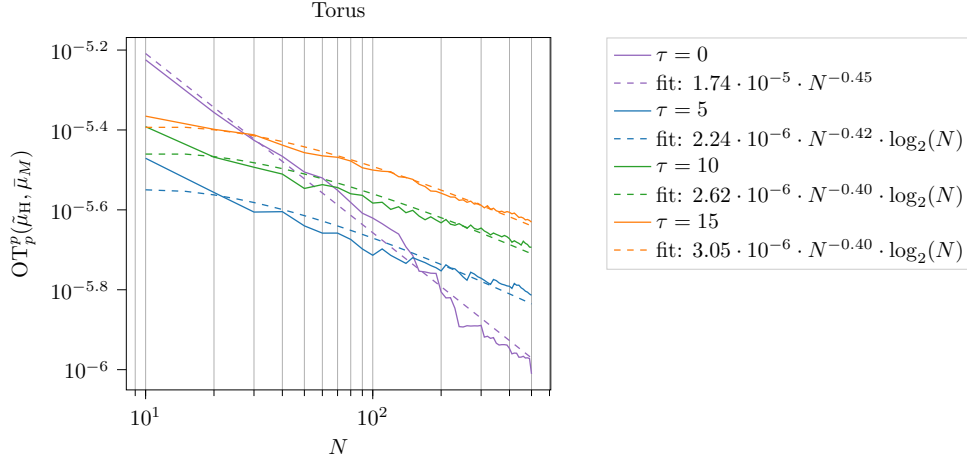}
        }
	\caption{Convergence rates of the thresholding Haar wavelet estimator $\tilde{\mu}_\mathrm{H}$, averaged over $10$ iterations, where $N\in \{10,\dots,500\}$ and $p=3$. We consider the model function (dashed line) $N\mapsto a\cdot N^{-b}\cdot \log_2(N)$ whenever $\tau >0$ with $a,b\in \mathbb R$ being the model parameters.}
	\label{fig:minimax_Haar_thr_torus}
\end{figure}

Evaluating the convergence behavior of the thresholding Haar wavelet estimator for $\tau \in \{5,10,15\}$ (see \cref{fig:minimax_Haar_thr_torus}), we deduce that the obtained rates are consistent with \cref{cor:thr}; namely, we get a rate of $\log_2(N)/\sqrt{N}$ for each $\tau \in \{5,10,15\}$. From \cref{tab:wav_coeff} it becomes apparent that thresholding significantly reduces the number of nonzero wavelet coefficients. In particular, we have a compression rate of up to $99\%$ compared to the Haar wavelet estimator ($\tau =0$). In addition, we can observe that describing the estimator $\Bar{\mu}_N$ requires substantially more information (as quantified by the cardinality of its support) than the thresholding Haar wavelet estimators, making the latter more convenient in applications where, for example, fast subsequent computations are required or storage is limited.

\begin{table}[t!]
    \centering
    \begin{tabular}{|l||c|c|c|}
        \hline
         & $N=100$ & $N=300$ & $N=500$ \\ \hline\hline
         Support of $\bar \mu_N$ & $456,900$ & $1,370,700$ & $2,284,500$ \\ \hline
        \hline
        Nonzero $\Tilde{\mu}_{\mathrm{H}}$ coefficients, $\tau =0$ & $272,206$ & $370,999$ & $420,456$ \\ \hline
         Nonzero $\Tilde{\mu}_{\mathrm{H}}$ coefficients, $\tau =5$ & $20,000$ (c.r.$\,\approx 93\%$) & $21,101$  (c.r.$\,\approx 94\%$) & $21,930$ (c.r.$\,\approx 95\%$)\\ \hline
         Nonzero $\Tilde{\mu}_{\mathrm{H}}$ coefficients, $\tau = 10$ & $7,602$ (c.r.$\,\approx 97\%$) & $8,280$ (c.r.$\,\approx 98\%$) & $9,056$ (c.r.$\,\approx 98\%$) \\ \hline
         Nonzero $\Tilde{\mu}_{\mathrm{H}}$ coefficients, $\tau = 15$ & $4,668$ (c.r.$\,\approx 98\%$) & $5,571$ (c.r.$\,\approx 98\%$) & $6,154$ (c.r.$\,\approx 99\%$) \\ \hline
    \end{tabular}
    \caption{Cardinality of the support set of the empirical mean $\Bar{\mu}_N$ and the number of nonzero coefficients of $\Tilde{\mu}_{\mathrm{H}}$, $\tau \in \{0,5,10,15\}$, averaged over $10$ runs, for different sample sizes $N$. The corresponding compression rates (c.r.) from thresholding ($\tau > 0$) compared to the case $\tau=0$ are provided in brackets.}
    \label{tab:wav_coeff}
\end{table}

Notice that there is a trade-off between sparsity and convergence rate of the (thresholding) Haar wavelet estimator: a very sparse estimator entails an extra factor of $\log_2(N)$ in terms of convergence rate (see the $\tau>0$ compared to $\tau=0$ curves in \cref{fig:minimax_Haar_thr_torus}). Also, note that the error $\mathrm{OT}_p^p(\tilde{\mu}_{\mathrm{H}},\Bar{\mu}_M)$ increases as $\tau$ gets larger, i.e., with a more stringent thresholding of the wavelets coefficients, the approximation becomes increasingly coarse, compromising accuracy.

In \cref{fig:heatmaps}, we see that for low $\tau$, the estimated density contains fine details, due to the presence of many small coefficients. By increasing $\tau$, these fine details vanish, and we obtain a rougher density estimator.

We have analyzed the computational cost of the Haar wavelet estimator as a function of $N$, and  \cref{fig:runtimes} shows that the runtime scales as $\mathcal{O}(N\log_2(N))$. This is consistent with what we would also expect in theory. Indeed, according to \cref{eq:Haar_estimator}, the Haar wavelet estimator requires computing at most $\mathcal{O}(J+K)=\mathcal{O}(\log_2(N))$ wavelet coefficients, and computing a wavelet coefficient has complexity of order $\mathcal{O}(N)$. Thus, compared to the empirical mean $\bar{\mu}_N = \frac 1N \sum_{i=1}^N \mu_i$, which clearly exhibits a computational complexity of $\mathcal{O}(N)$, the Haar wavelet estimator is computationally more expensive, but only by a logarithmic factor.

\begin{figure}[t!]
	\begin{subfigure}[b]{0.49\textwidth}
		\centering
		\includegraphics[scale=0.82,left]{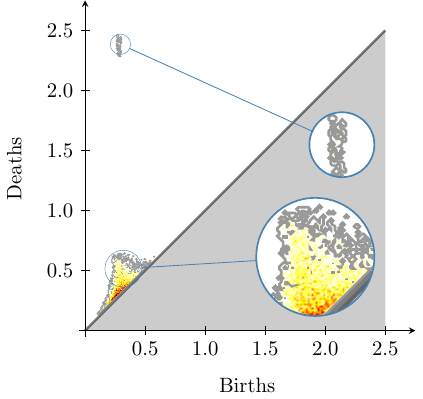}
		\caption{$\tau=0$}
		\label{fig:heatmap_torus}
	\end{subfigure}
	\begin{subfigure}[b]{0.49\textwidth}
		\centering
		\includegraphics[scale=0.82,left]{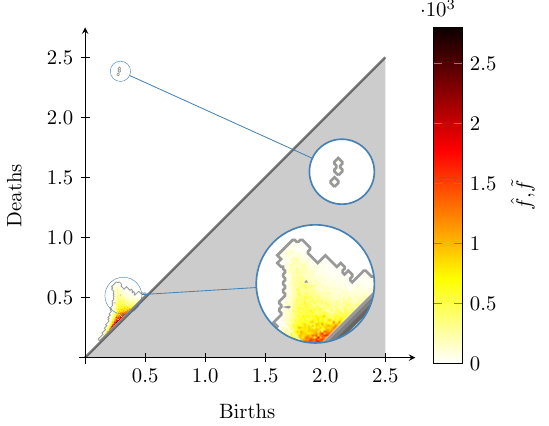}
		\caption{$\tau=5$}
		\label{fig:heatmap_torus_thr}
	\end{subfigure}
	\caption{\subref{fig:heatmap_torus} The density of the Haar wavelet estimator, denoted $\hat{f}$, based on persistence diagrams of samples of a torus. \subref{fig:heatmap_torus_thr} The density of a thresholding Haar wavelet estimator, denoted $\tilde{f}$. Here, we used grey contour lines to indicate the support of the density functions.}
	\label{fig:heatmaps}
\end{figure}
\begin{figure}[t!]
	\centering
        \resizebox{0.5\textwidth}{!}{
	\begin{tikzpicture}[scale=1, every node/.style={scale=1}]
\definecolor{darkgray176}{RGB}{176,176,176}
\definecolor{steelblue31119180}{RGB}{31,119,180}
\definecolor{lightgray204}{RGB}{204,204,204}
\begin{axis}[
legend cell align={left},
legend style={
  fill opacity=0.8,
  draw opacity=1,
  text opacity=1,
  at={(0.01,0.99)},
  anchor=north west,
  draw=lightgray204
},
grid,
tick align=outside,
tick pos=left,
x grid style={darkgray176},
xlabel={$N$},
xmin=50, xmax=500,
xminorgrids,
xtick = {50, 100,...,500},
ytick = {1,5,10,15},
xtick style={color=black},
y grid style={darkgray176},
ylabel={Computing time (normalized)},
ymin=1, ymax=19,
yminorgrids,
ytick style={color=black}
]
\addplot [semithick, steelblue31119180]
table {%
50 1
100 2.12589513970362
150 3.54275821589728
200 5.32036486721268
250 7.21106312248331
300 8.98528465524642
350 11.2837067177833
400 13.3211702148311
450 15.8980089976685
500 18.1279210315161
};
\addlegendentry{$\hat{\mu}_{\mathrm{H}}$}
\addplot [semithick, steelblue31119180, dashed]
table {%
50 1.09467530750156
100 2.57726812058552
150 4.20627797208675
200 5.93037125233581
250 7.72516792428375
300 9.57630846092067
350 11.4743045677825
400 13.4124125270012
450 15.3855900650064
500 17.3899233764794
};
\addlegendentry{fit: $3.88\cdot 10^{-3}\cdot N\cdot \log_2(N)$}
\end{axis}
\end{tikzpicture}
        }
	\caption{Computing time of Haar wavelet estimator for sample sizes $N \in \{50,\dots,500\}$, where we normalized to the computing time corresponding to $N=50$ ($\approx 200$ seconds on one CPU core).}
	\label{fig:runtimes}
\end{figure}
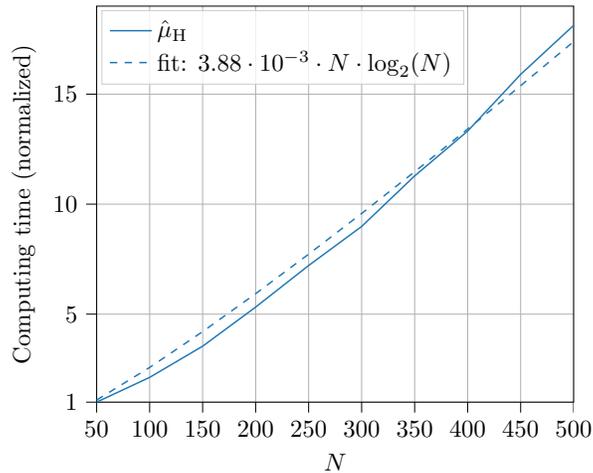

\paragraph{Double torus.} We sample $n=1000$ points uniformly at random from a double torus with inner and outer radii of $0.5$ and $2$, respectively, and add Gaussian noise with variance $0.5$ to generate the point clouds $\{X_i\}_{i=1}^M$ and the persistence diagrams $\{\mu_i\}_{i=1}^M$, where $M=10,000$. As in the previous example, the convergence rates of the Haar wavelet estimator (see \cref{fig:minimax_Haar_2torus}) coincide with the rates predicted by \cref{thm:minimax_haar}. In \cref{fig:minimax_Haar_thr_2torus}, the convergence rates of the thresholding Haar wavelet estimator are shown, corresponding to the rates from \cref{cor:thr}. 

\begin{figure}[t!]
	\centering
        \resizebox{0.8\textwidth}{!}{
	\input{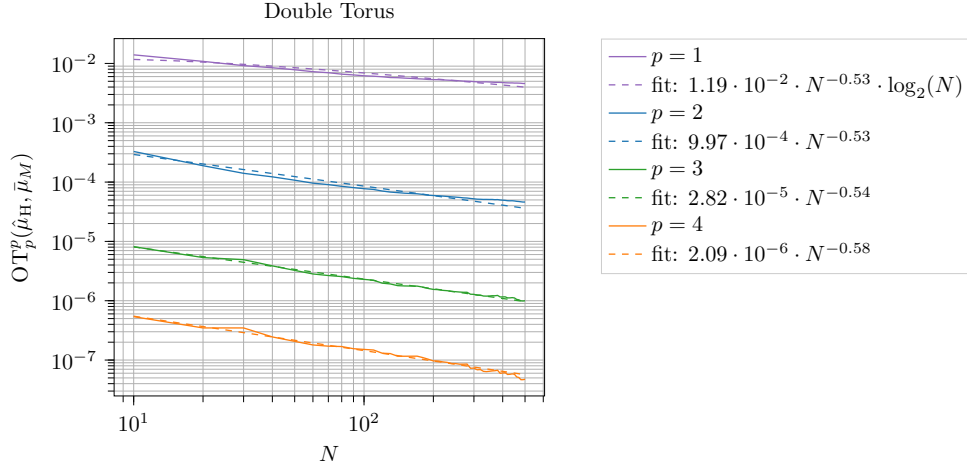}
        }
	\caption{Convergence rates of the Haar wavelet estimator $\hat{\mu}_\mathrm{H}$, averaged over $10$ iterations, for different sample sizes $N$ ranging from $10$ to $500$.}
	\label{fig:minimax_Haar_2torus}
\end{figure}
\begin{figure}[t!]
	\centering
        \resizebox{0.8\textwidth}{!}{%
	\input{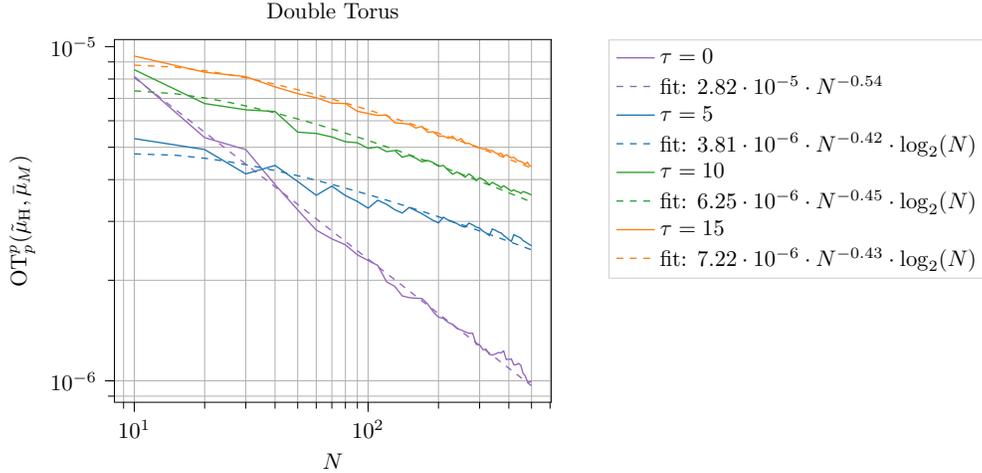}%
        }
	\caption{Convergence rates of the thresholding Haar wavelet estimator $\tilde{\mu}_\mathrm{H}$, averaged over $10$ iterations, where $N\in \{10,\dots,500\}$ and $p=3$.}
	\label{fig:minimax_Haar_thr_2torus}
\end{figure}
\paragraph{Clustered process.} Consider the following clustered process consisting of $n=1000$ points. We first select uniformly at random $n_{\mathrm{c}}=4$ points from the square $[0,1]^2$, denoted $\{c_i\}_{i=1}^{n_c}$, which serve as the clusters' centers. For each center $c_i$, $i\in \{1,\dots,n_{\mathrm{c}}\}$, we generate $n/n_{\mathrm{c}} = 250$ points by sampling from a Gaussian distribution with mean $c_i$ and variance $0.1$. Following this procedure, we create the point clouds $\{X_i\}_{i=1}^M$ and compute the corresponding persistence diagrams $\{\mu_i\}_{i=1}^M$, $M=10,000$. Also in this example of a multimodal distribution, the convergence rates of the Haar wavelet estimator and the thresholding Haar wavelet estimator, provided respectively in \cref{fig:minimax_Haar_clust_proc,fig:minimax_Haar_thr_clust_proc}, verify the results from \cref{thm:minimax_haar,cor:thr}. 
\begin{figure}[t!]
	\centering
        \resizebox{0.8\textwidth}{!}{
	\input{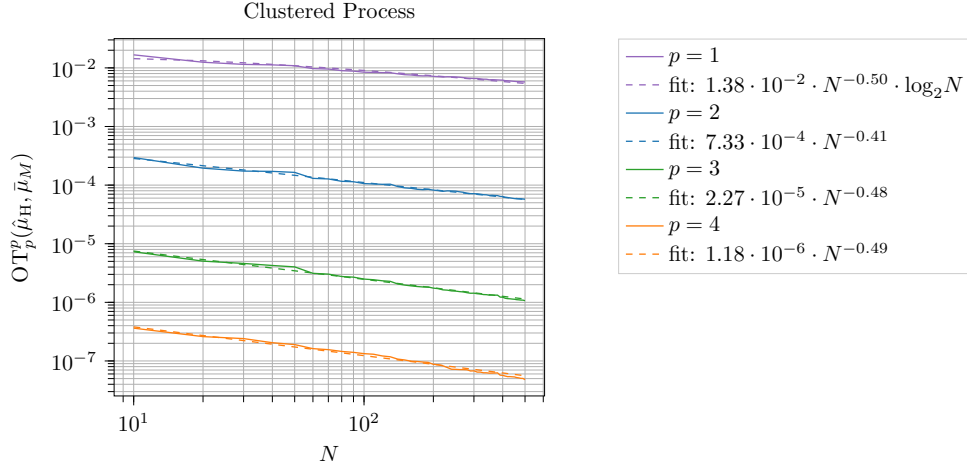}
        }
	\caption{Convergence rates of the Haar wavelet estimator $\hat{\mu}_\mathrm{H}$, averaged over $10$ iterations, for $N \in \{10,\dots,500\}$.}
	\label{fig:minimax_Haar_clust_proc}
\end{figure}
\begin{figure}[htbp]
	\centering
        \resizebox{0.8\textwidth}{!}{%
	\input{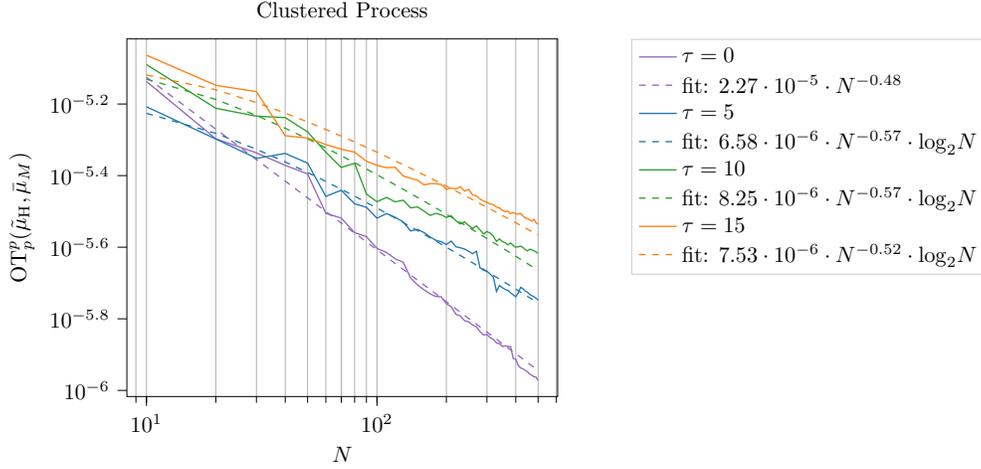}%
        }
	\caption{Convergence rates of the thresholding Haar wavelet estimator $\tilde{\mu}_\mathrm{H}$, averaged over $10$ iterations, for $N\in \{10,\dots,500\}$, where $p=3$.}
	\label{fig:minimax_Haar_thr_clust_proc}
\end{figure}

\subsection{An Application: Classification of a Dynamical System}

We now demonstrate the utility of our proposed wavelet-based estimators by applying it to a core task in machine learning, namely, classification. 
We consider a discrete-time dynamical system previously studied topologically by \citep{conti2022topological}.  The dynamical system models fluid flow as a linked twisted map and Poincar\'{e} section, in particular, which is the discretization of a continuous dynamical system given by following the location path of a particle at discrete time intervals.  This linked twisted map is given by the following system of equations:
\begin{align}\label{eq:dyn_sys}
\begin{split}
    x_{k+1} &= \left(x_k + r y_{k}(1-y_{k})\right)\bmod{1}, \\
    y_{k+1} &= \left(y_k + r x_{k+1}(1-x_{k+1})\right)\bmod{1},
\end{split}
\end{align}
where the time index $k \in \mathbb N_0$, the initial conditions $(x_0,y_0) \in (0,1)^2$ and $r>0$. The orbits of this dynamical system, $\{(x_k,y_k) \colon k \in \mathbb N_0\}$, are used to model fluid flow. 
As shown by \cite{conti2022topological}, the shape of the orbit depends on the parameter $r$ but not on the initial condition $(x_0,y_0) \in (0,1)^2$ in general; see \cref{fig:dyn_sys}. A machine learning task relevant to the study of such dynamical systems is to classify the value of the parameter $r$ into the classes, based on the observed orbit $\{(x_k,y_k) \colon k \in \mathbb N_0\}$. We take the classes to be specified by five labels $\mathcal{R} \coloneqq\{2,\, 3.5,\, 4,\, 4.1,\, 4.3\}$ as in \citep{conti2022topological}. To address this problem, we construct a dataset as follows.

\begin{figure}[t!]
	\centering
        \begin{subfigure}{0.225\textwidth}
            \resizebox{\textwidth}{!}{
            \includegraphics[]{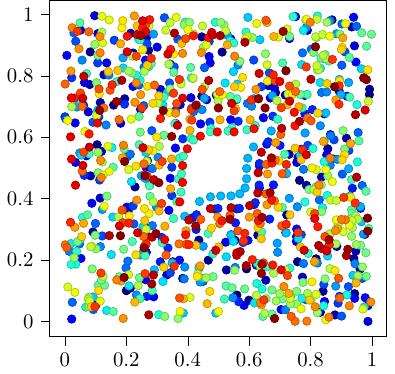}}
            \caption{$r=4$}
            \label{fig:ds_4_1}
        \end{subfigure}
        \begin{subfigure}{0.225\textwidth}
            \resizebox{\textwidth}{!}{
            \includegraphics[]{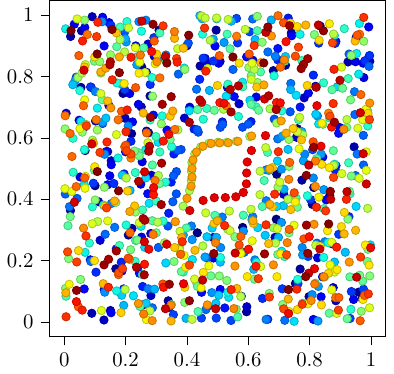}}
            \caption{$r = 4$}
            \label{fig:ds_4_2}
        \end{subfigure}
        \begin{subfigure}{0.225\textwidth}
            \resizebox{\textwidth}{!}{
            \includegraphics[]{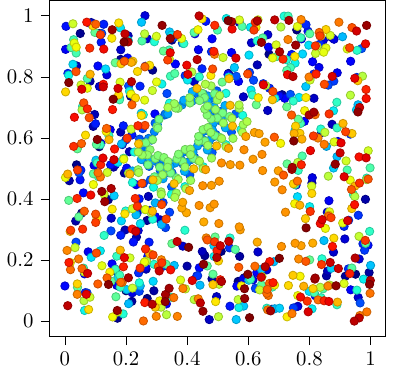}}
            \caption{$r=4.3$}
            \label{fig:ds_43_1}
        \end{subfigure}
        \begin{subfigure}{0.225\textwidth}
            \resizebox{\textwidth}{!}{
            \includegraphics[]{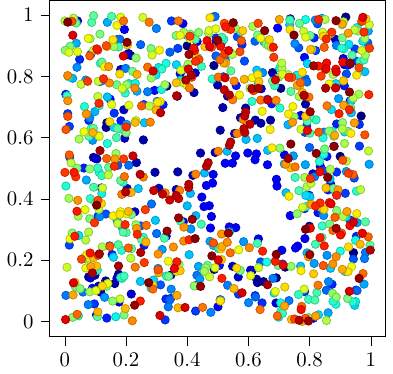}}
            \caption{$r = 4.3$}
            \label{fig:ds_43_2}
        \end{subfigure}
        \begin{subfigure}{0.07\textwidth}
            \begin{tikzpicture}[scale=0.5483, every node/.style={scale=1}]
\begin{axis}[
    hide axis,
    colorbar,
    colorbar style={
    at={(0.25,1.8)},
    ytick={0,999},
    yticklabels={$0$,$n-1$},
    ylabel={Time index $k$},
    ylabel style={at={(1,0.5)}},},
    colormap={mymap}{[1pt]
  rgb(0pt)=(0,0,0.5);
  rgb(22pt)=(0,0,1);
  rgb(25pt)=(0,0,1);
  rgb(68pt)=(0,0.86,1);
  rgb(70pt)=(0,0.9,0.967741935483871);
  rgb(75pt)=(0.0806451612903226,1,0.887096774193548);
  rgb(128pt)=(0.935483870967742,1,0.0322580645161291);
  rgb(130pt)=(0.967741935483871,0.962962962962963,0);
  rgb(132pt)=(1,0.925925925925926,0);
  rgb(178pt)=(1,0.0740740740740741,0);
  rgb(182pt)=(0.909090909090909,0,0);
  rgb(200pt)=(0.5,0,0)
},
point meta max=999,
point meta min=0,]
    \addplot [draw=none] coordinates {(0,0)};
\end{axis}
\end{tikzpicture}
        \end{subfigure}
	\caption{Examples of orbits $\{(x_k,y_k) \colon 0 \leq k \leq n-1\}$ of the dynamical system \cref{eq:dyn_sys}, consisting of $n=1000$ points in $[0,1)^2$. The orbits in \protect\subref{fig:ds_4_1} and \protect\subref{fig:ds_4_2} correspond to the same parameter, $r=4$, but they have different initial conditions, namely, $(0.2,0.1)$ and $(0.4,0.9)$, respectively. The initial conditions in \protect\subref{fig:ds_43_1} and \protect\subref{fig:ds_43_2} are also $(0.2,0.1)$ and $(0.4,0.9)$, respectively.}
        \label{fig:dyn_sys}
\end{figure}
For each class (label) $r \in \mathcal{R}$, $N=100$ orbits $\{X^r_i\}_{i=1}^N$ of the form $\{(x_k,y_k) \colon 0 \leq k \leq n-1\}$, each consisting of $n=1000$ points, are generated. From the orbits $\{X^r_i\}_{i=1}^N$, which can be viewed as point clouds in $[0,1)^2$, the persistence diagrams $\{\mu^r_i\}_{i=1}^N$ are computed. Based on the persistence diagrams $\{\mu^r_i\}_{i=1}^N$, 
we compute the normalized Haar density estimator $\hat{\mu}^r_{\mathrm{H}}$, for each class $r \in \mathcal{R}$. To successfully perform classification, it essential that the estimated densities are sufficiently distinguishable, i.e., that $\hat{\mu}^r_{\mathrm{H}}$ and $\hat{\mu}^{r'}_{\mathrm{H}}$ are dissimilar for all distinct $r,r' \in \mathcal{R}$. Since these probability measures admit Lebesgue densities, we can quantify their (dis)similarity using the Hellinger distance, which is a metric between probability measures.
\begin{definition}[Hellinger distance] 
Let $\nu_1$ and $\nu_2$ be two probability measures on $\Omega$ with Lebesgue densities $g_1$ and $g_2$, respectively. The Hellinger distance between $\nu_1$ and $\nu_2$ is defined as $\mathrm{H}(\nu_1,\nu_2) \coloneqq \sqrt{\mathrm{H}^2(\nu_1,\nu_2)}$, where
    \begin{align*}
        \mathrm{H}^2(\nu_1,\nu_2) \coloneqq \frac 12 \int_{\Omega} \left(\sqrt{g_1(x)}-\sqrt{g_2(x)}\right)^2\,\mathrm{d}x.
    \end{align*}
\end{definition}

The Hellinger distance $\mathrm{H}(\nu_1,\nu_2)$ between two probability measures $\nu_1$ and $\nu_2$ admitting a Lebesgue density is symmetric, nonnegative, and it is zero if and only if $\nu_1=\nu_2$. Moreover, an application of the Cauchy--Schwarz inequality shows that 0 $\leq \mathrm{H}(\nu_1,\nu_2) \leq 1$.

Computing the pairwise distances $\mathrm{H}(\hat{\mu}^r_{\mathrm{H}},\hat{\mu}^{r'}_{\mathrm{H}})$, $r,r' \in \mathcal{R}$, results in the $5 \times 5$ dissimilarity matrix presented in \cref{fig:hell_0}. A value of $1$ indicates complete dissimilarity between $\hat{\mu}^r_{\mathrm{H}}$ and $\hat{\mu}^{r'}_{\mathrm{H}}$, whereas $0$ implies  equality.
Performing the same computations with the thresholding Haar wavelet estimator $\tilde{\mu}^r_{\mathrm{H}}$ yields the results shown in \cref{fig:hell_1}.  
Note that $\mathrm{H}(\tilde{\mu}^r_{\mathrm{H}},\tilde{\mu}^{r'}_{\mathrm{H}})$ is systematically smaller than $\mathrm{H}(\hat{\mu}^r_{\mathrm{H}},\hat{\mu}^{r'}_{\mathrm{H}})$, for all distinct $r,r' \in \mathcal{R}$, which results from the loss of accuracy when enforcing sparsity through thresholding. 
Nevertheless, we obtain that both $\mathrm{H}(\hat{\mu}^r_{\mathrm{H}},\hat{\mu}^{r'}_{\mathrm{H}})$ and $\mathrm{H}(\tilde{\mu}^r_{\mathrm{H}},\tilde{\mu}^{r'}_{\mathrm{H}})$ are close to $1$ for all distinct $r,r' \in \mathcal{R}$. 
Thus, we can conclude that both the Haar wavelet estimator and the thresholding Haar wavelet estimator capture the topological differences in the trajectories generated by the different values of $r$ and can be used to discriminate them.

Considering the case where only $N=10$ orbits (and hence persistence diagrams) can be accessed, we infer from \cref{fig:hell10_0,fig:hell10_1} that these results still hold, showing that the classification performance of Haar wavelet-based estimators is robust to a decrease in sample size $N$.

\begin{figure}[t!]
    \centering
    \begin{tabular}[c]{ll}
        \begin{subfigure}{0.47\textwidth}
            \resizebox{0.825\textwidth}{!}{%
\begin{tikzpicture}

\definecolor{darkgray176}{RGB}{176,176,176}

\begin{axis}[
tick align=outside,
tick pos=left,
unit vector ratio*=1 1 1,
x grid style={darkgray176},
xlabel={$r$},
xmin=0, xmax=5,
xtick style={color=black},
xtick={0.5,1.5,2.5,3.5,4.5},
xticklabels={2,3.5,4,4.1,4.3},
y grid style={darkgray176},
ylabel={$r$},
ymin=0, ymax=5,
ytick style={color=black},
ytick={0.5,1.5,2.5,3.5,4.5},
yticklabels={2,3.5,4,4.1,4.3}
]
\addplot graphics [includegraphics cmd=\pgfimage,xmin=0, xmax=5, ymin=0, ymax=5] {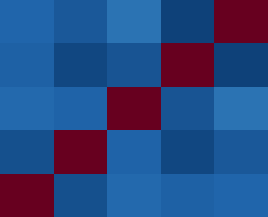};
\draw (axis cs:0.5,0.5) node[
  scale=0.75,
  text=white,
  rotate=0.0
]{0};
\draw (axis cs:1.5,0.5) node[
  scale=0.75,
  text=white,
  rotate=0.0
]{0.94};
\draw (axis cs:2.5,0.5) node[
  scale=0.75,
  text=white,
  rotate=0.0
]{0.89};
\draw (axis cs:3.5,0.5) node[
  scale=0.75,
  text=white,
  rotate=0.0
]{0.91};
\draw (axis cs:4.5,0.5) node[
  scale=0.75,
  text=white,
  rotate=0.0
]{0.9};
\draw (axis cs:0.5,1.5) node[
  scale=0.75,
  text=white,
  rotate=0.0
]{0.94};
\draw (axis cs:1.5,1.5) node[
  scale=0.75,
  text=white,
  rotate=0.0
]{0};
\draw (axis cs:2.5,1.5) node[
  scale=0.75,
  text=white,
  rotate=0.0
]{0.9};
\draw (axis cs:3.5,1.5) node[
  scale=0.75,
  text=white,
  rotate=0.0
]{0.96};
\draw (axis cs:4.5,1.5) node[
  scale=0.75,
  text=white,
  rotate=0.0
]{0.92};
\draw (axis cs:0.5,2.5) node[
  scale=0.75,
  text=white,
  rotate=0.0
]{0.89};
\draw (axis cs:1.5,2.5) node[
  scale=0.75,
  text=white,
  rotate=0.0
]{0.9};
\draw (axis cs:2.5,2.5) node[
  scale=0.75,
  text=white,
  rotate=0.0
]{0};
\draw (axis cs:3.5,2.5) node[
  scale=0.75,
  text=white,
  rotate=0.0
]{0.93};
\draw (axis cs:4.5,2.5) node[
  scale=0.75,
  text=white,
  rotate=0.0
]{0.87};
\draw (axis cs:0.5,3.5) node[
  scale=0.75,
  text=white,
  rotate=0.0
]{0.91};
\draw (axis cs:1.5,3.5) node[
  scale=0.75,
  text=white,
  rotate=0.0
]{0.96};
\draw (axis cs:2.5,3.5) node[
  scale=0.75,
  text=white,
  rotate=0.0
]{0.93};
\draw (axis cs:3.5,3.5) node[
  scale=0.75,
  text=white,
  rotate=0.0
]{0};
\draw (axis cs:4.5,3.5) node[
  scale=0.75,
  text=white,
  rotate=0.0
]{0.97};
\draw (axis cs:0.5,4.5) node[
  scale=0.75,
  text=white,
  rotate=0.0
]{0.9};
\draw (axis cs:1.5,4.5) node[
  scale=0.75,
  text=white,
  rotate=0.0
]{0.92};
\draw (axis cs:2.5,4.5) node[
  scale=0.75,
  text=white,
  rotate=0.0
]{0.87};
\draw (axis cs:3.5,4.5) node[
  scale=0.75,
  text=white,
  rotate=0.0
]{0.97};
\draw (axis cs:4.5,4.5) node[
  scale=0.75,
  text=white,
  rotate=0.0
]{0};
\end{axis}

\end{tikzpicture}%
            }
            \caption{$N=100$, $\tau=0$}
            \label{fig:hell_0}
        \end{subfigure} &
        \begin{subfigure}{0.47\textwidth}
            \resizebox{0.825\textwidth}{!}{%
	    \begin{tikzpicture}

\definecolor{darkgray176}{RGB}{176,176,176}

\begin{axis}[
colorbar style={ylabel={}},
colormap={mymap}{[1pt]
  rgb(0pt)=(0.403921568627451,0,0.12156862745098);
  rgb(1pt)=(0.698039215686274,0.0941176470588235,0.168627450980392);
  rgb(2pt)=(0.83921568627451,0.376470588235294,0.301960784313725);
  rgb(3pt)=(0.956862745098039,0.647058823529412,0.509803921568627);
  rgb(4pt)=(0.992156862745098,0.858823529411765,0.780392156862745);
  rgb(5pt)=(0.968627450980392,0.968627450980392,0.968627450980392);
  rgb(6pt)=(0.819607843137255,0.898039215686275,0.941176470588235);
  rgb(7pt)=(0.572549019607843,0.772549019607843,0.870588235294118);
  rgb(8pt)=(0.262745098039216,0.576470588235294,0.764705882352941);
  rgb(9pt)=(0.129411764705882,0.4,0.674509803921569);
  rgb(10pt)=(0.0196078431372549,0.188235294117647,0.380392156862745)
},
point meta max=1,
point meta min=0,
tick align=outside,
tick pos=left,
unit vector ratio*=1 1 1,
x grid style={darkgray176},
xlabel={$r$},
xmin=0, xmax=5,
xtick style={color=black},
xtick={0.5,1.5,2.5,3.5,4.5},
xticklabels={2,3.5,4,4.1,4.3},
y grid style={darkgray176},
ylabel={$r$},
ymin=0, ymax=5,
ytick style={color=black},
ytick={0.5,1.5,2.5,3.5,4.5},
yticklabels={2,3.5,4,4.1,4.3}
]
\addplot graphics [includegraphics cmd=\pgfimage,xmin=0, xmax=5, ymin=0, ymax=5] {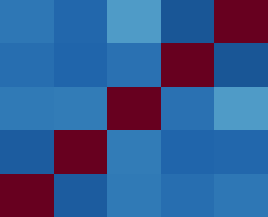};
\draw (axis cs:0.5,0.5) node[
  scale=0.75,
  text=white,
  rotate=0.0
]{0};
\draw (axis cs:1.5,0.5) node[
  scale=0.75,
  text=white,
  rotate=0.0
]{0.92};
\draw (axis cs:2.5,0.5) node[
  scale=0.75,
  text=white,
  rotate=0.0
]{0.85};
\draw (axis cs:3.5,0.5) node[
  scale=0.75,
  text=white,
  rotate=0.0
]{0.88};
\draw (axis cs:4.5,0.5) node[
  scale=0.75,
  text=white,
  rotate=0.0
]{0.86};
\draw (axis cs:0.5,1.5) node[
  scale=0.75,
  text=white,
  rotate=0.0
]{0.92};
\draw (axis cs:1.5,1.5) node[
  scale=0.75,
  text=white,
  rotate=0.0
]{0};
\draw (axis cs:2.5,1.5) node[
  scale=0.75,
  text=white,
  rotate=0.0
]{0.85};
\draw (axis cs:3.5,1.5) node[
  scale=0.75,
  text=white,
  rotate=0.0
]{0.9};
\draw (axis cs:4.5,1.5) node[
  scale=0.75,
  text=white,
  rotate=0.0
]{0.9};
\draw (axis cs:0.5,2.5) node[
  scale=0.75,
  text=white,
  rotate=0.0
]{0.85};
\draw (axis cs:1.5,2.5) node[
  scale=0.75,
  text=white,
  rotate=0.0
]{0.85};
\draw (axis cs:2.5,2.5) node[
  scale=0.75,
  text=white,
  rotate=0.0
]{0};
\draw (axis cs:3.5,2.5) node[
  scale=0.75,
  text=white,
  rotate=0.0
]{0.87};
\draw (axis cs:4.5,2.5) node[
  scale=0.75,
  text=white,
  rotate=0.0
]{0.78};
\draw (axis cs:0.5,3.5) node[
  scale=0.75,
  text=white,
  rotate=0.0
]{0.88};
\draw (axis cs:1.5,3.5) node[
  scale=0.75,
  text=white,
  rotate=0.0
]{0.9};
\draw (axis cs:2.5,3.5) node[
  scale=0.75,
  text=white,
  rotate=0.0
]{0.87};
\draw (axis cs:3.5,3.5) node[
  scale=0.75,
  text=white,
  rotate=0.0
]{0};
\draw (axis cs:4.5,3.5) node[
  scale=0.75,
  text=white,
  rotate=0.0
]{0.93};
\draw (axis cs:0.5,4.5) node[
  scale=0.75,
  text=white,
  rotate=0.0
]{0.86};
\draw (axis cs:1.5,4.5) node[
  scale=0.75,
  text=white,
  rotate=0.0
]{0.9};
\draw (axis cs:2.5,4.5) node[
  scale=0.75,
  text=white,
  rotate=0.0
]{0.78};
\draw (axis cs:3.5,4.5) node[
  scale=0.75,
  text=white,
  rotate=0.0
]{0.93};
\draw (axis cs:4.5,4.5) node[
  scale=0.75,
  text=white,
  rotate=0.0
]{0};
\end{axis}

\end{tikzpicture}%
            }
            \caption{$N=100$, $\tau = 5$}
            \label{fig:hell_1}
        \end{subfigure} \\
        \begin{subfigure}{0.47\textwidth}
            \resizebox{0.825\textwidth}{!}{%
\begin{tikzpicture}

\definecolor{darkgray176}{RGB}{176,176,176}

\begin{axis}[
unit vector ratio*=1 1 1,
colorbar style={ylabel={}},
colormap={mymap}{[1pt]
  rgb(0pt)=(0.403921568627451,0,0.12156862745098);
  rgb(1pt)=(0.698039215686274,0.0941176470588235,0.168627450980392);
  rgb(2pt)=(0.83921568627451,0.376470588235294,0.301960784313725);
  rgb(3pt)=(0.956862745098039,0.647058823529412,0.509803921568627);
  rgb(4pt)=(0.992156862745098,0.858823529411765,0.780392156862745);
  rgb(5pt)=(0.968627450980392,0.968627450980392,0.968627450980392);
  rgb(6pt)=(0.819607843137255,0.898039215686275,0.941176470588235);
  rgb(7pt)=(0.572549019607843,0.772549019607843,0.870588235294118);
  rgb(8pt)=(0.262745098039216,0.576470588235294,0.764705882352941);
  rgb(9pt)=(0.129411764705882,0.4,0.674509803921569);
  rgb(10pt)=(0.0196078431372549,0.188235294117647,0.380392156862745)
},
point meta max=1,
point meta min=0,
tick align=outside,
tick pos=left,
x grid style={darkgray176},
xlabel={$r$},
xmin=0, xmax=5,
xtick style={color=black},
xtick={0.5,1.5,2.5,3.5,4.5},
xticklabels={2,3.5,4,4.1,4.3},
y grid style={darkgray176},
ylabel={$r$},
ymin=0, ymax=5,
ytick style={color=black},
ytick={0.5,1.5,2.5,3.5,4.5},
yticklabels={2,3.5,4,4.1,4.3}
]
\addplot graphics [includegraphics cmd=\pgfimage,xmin=0, xmax=5, ymin=0, ymax=5] {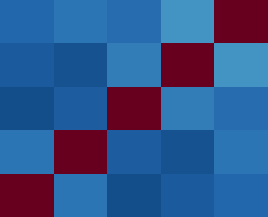};
\draw (axis cs:0.5,0.5) node[
  scale=0.75,
  text=white,
  rotate=0.0
]{0};
\draw (axis cs:1.5,0.5) node[
  scale=0.75,
  text=white,
  rotate=0.0
]{0.86};
\draw (axis cs:2.5,0.5) node[
  scale=0.75,
  text=white,
  rotate=0.0
]{0.95};
\draw (axis cs:3.5,0.5) node[
  scale=0.75,
  text=white,
  rotate=0.0
]{0.92};
\draw (axis cs:4.5,0.5) node[
  scale=0.75,
  text=white,
  rotate=0.0
]{0.9};
\draw (axis cs:0.5,1.5) node[
  scale=0.75,
  text=white,
  rotate=0.0
]{0.86};
\draw (axis cs:1.5,1.5) node[
  scale=0.75,
  text=white,
  rotate=0.0
]{0};
\draw (axis cs:2.5,1.5) node[
  scale=0.75,
  text=white,
  rotate=0.0
]{0.92};
\draw (axis cs:3.5,1.5) node[
  scale=0.75,
  text=white,
  rotate=0.0
]{0.93};
\draw (axis cs:4.5,1.5) node[
  scale=0.75,
  text=white,
  rotate=0.0
]{0.87};
\draw (axis cs:0.5,2.5) node[
  scale=0.75,
  text=white,
  rotate=0.0
]{0.95};
\draw (axis cs:1.5,2.5) node[
  scale=0.75,
  text=white,
  rotate=0.0
]{0.92};
\draw (axis cs:2.5,2.5) node[
  scale=0.75,
  text=white,
  rotate=0.0
]{0};
\draw (axis cs:3.5,2.5) node[
  scale=0.75,
  text=white,
  rotate=0.0
]{0.85};
\draw (axis cs:4.5,2.5) node[
  scale=0.75,
  text=white,
  rotate=0.0
]{0.89};
\draw (axis cs:0.5,3.5) node[
  scale=0.75,
  text=white,
  rotate=0.0
]{0.92};
\draw (axis cs:1.5,3.5) node[
  scale=0.75,
  text=white,
  rotate=0.0
]{0.93};
\draw (axis cs:2.5,3.5) node[
  scale=0.75,
  text=white,
  rotate=0.0
]{0.85};
\draw (axis cs:3.5,3.5) node[
  scale=0.75,
  text=white,
  rotate=0.0
]{0};
\draw (axis cs:4.5,3.5) node[
  scale=0.75,
  text=white,
  rotate=0.0
]{0.8};
\draw (axis cs:0.5,4.5) node[
  scale=0.75,
  text=white,
  rotate=0.0
]{0.9};
\draw (axis cs:1.5,4.5) node[
  scale=0.75,
  text=white,
  rotate=0.0
]{0.87};
\draw (axis cs:2.5,4.5) node[
  scale=0.75,
  text=white,
  rotate=0.0
]{0.89};
\draw (axis cs:3.5,4.5) node[
  scale=0.75,
  text=white,
  rotate=0.0
]{0.8};
\draw (axis cs:4.5,4.5) node[
  scale=0.75,
  text=white,
  rotate=0.0
]{0};
\end{axis}

\end{tikzpicture}%
            }
            \caption{$N=10$, $\tau=0$}
            \label{fig:hell10_0}
        \end{subfigure} &
        \begin{subfigure}{0.472\textwidth}
            \resizebox{\textwidth}{!}{%
\begin{tikzpicture}

\definecolor{darkgray176}{RGB}{176,176,176}

\begin{axis}[
unit vector ratio*=1 1 1,
colorbar,
colorbar style={ylabel={}},
colormap={mymap}{[1pt]
  rgb(0pt)=(0.403921568627451,0,0.12156862745098);
  rgb(1pt)=(0.698039215686274,0.0941176470588235,0.168627450980392);
  rgb(2pt)=(0.83921568627451,0.376470588235294,0.301960784313725);
  rgb(3pt)=(0.956862745098039,0.647058823529412,0.509803921568627);
  rgb(4pt)=(0.992156862745098,0.858823529411765,0.780392156862745);
  rgb(5pt)=(0.968627450980392,0.968627450980392,0.968627450980392);
  rgb(6pt)=(0.819607843137255,0.898039215686275,0.941176470588235);
  rgb(7pt)=(0.572549019607843,0.772549019607843,0.870588235294118);
  rgb(8pt)=(0.262745098039216,0.576470588235294,0.764705882352941);
  rgb(9pt)=(0.129411764705882,0.4,0.674509803921569);
  rgb(10pt)=(0.0196078431372549,0.188235294117647,0.380392156862745)
},
point meta max=1,
point meta min=0,
tick align=outside,
tick pos=left,
x grid style={darkgray176},
xlabel={$r$},
xmin=0, xmax=5,
xtick style={color=black},
xtick={0.5,1.5,2.5,3.5,4.5},
xticklabels={2,3.5,4,4.1,4.3},
y grid style={darkgray176},
ylabel={$r$},
ymin=0, ymax=5,
ytick style={color=black},
ytick={0.5,1.5,2.5,3.5,4.5},
yticklabels={2,3.5,4,4.1,4.3}
]
\addplot graphics [includegraphics cmd=\pgfimage,xmin=0, xmax=5, ymin=0, ymax=5] {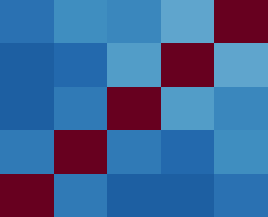};
\draw (axis cs:0.5,0.5) node[
  scale=0.75,
  text=white,
  rotate=0.0
]{0};
\draw (axis cs:1.5,0.5) node[
  scale=0.75,
  text=white,
  rotate=0.0
]{0.85};
\draw (axis cs:2.5,0.5) node[
  scale=0.75,
  text=white,
  rotate=0.0
]{0.91};
\draw (axis cs:3.5,0.5) node[
  scale=0.75,
  text=white,
  rotate=0.0
]{0.91};
\draw (axis cs:4.5,0.5) node[
  scale=0.75,
  text=white,
  rotate=0.0
]{0.87};
\draw (axis cs:0.5,1.5) node[
  scale=0.75,
  text=white,
  rotate=0.0
]{0.85};
\draw (axis cs:1.5,1.5) node[
  scale=0.75,
  text=white,
  rotate=0.0
]{0};
\draw (axis cs:2.5,1.5) node[
  scale=0.75,
  text=white,
  rotate=0.0
]{0.86};
\draw (axis cs:3.5,1.5) node[
  scale=0.75,
  text=white,
  rotate=0.0
]{0.89};
\draw (axis cs:4.5,1.5) node[
  scale=0.75,
  text=white,
  rotate=0.0
]{0.81};
\draw (axis cs:0.5,2.5) node[
  scale=0.75,
  text=white,
  rotate=0.0
]{0.91};
\draw (axis cs:1.5,2.5) node[
  scale=0.75,
  text=white,
  rotate=0.0
]{0.86};
\draw (axis cs:2.5,2.5) node[
  scale=0.75,
  text=white,
  rotate=0.0
]{0};
\draw (axis cs:3.5,2.5) node[
  scale=0.75,
  text=white,
  rotate=0.0
]{0.78};
\draw (axis cs:4.5,2.5) node[
  scale=0.75,
  text=white,
  rotate=0.0
]{0.83};
\draw (axis cs:0.5,3.5) node[
  scale=0.75,
  text=white,
  rotate=0.0
]{0.91};
\draw (axis cs:1.5,3.5) node[
  scale=0.75,
  text=white,
  rotate=0.0
]{0.89};
\draw (axis cs:2.5,3.5) node[
  scale=0.75,
  text=white,
  rotate=0.0
]{0.78};
\draw (axis cs:3.5,3.5) node[
  scale=0.75,
  text=white,
  rotate=0.0
]{0};
\draw (axis cs:4.5,3.5) node[
  scale=0.75,
  text=white,
  rotate=0.0
]{0.76};
\draw (axis cs:0.5,4.5) node[
  scale=0.75,
  text=white,
  rotate=0.0
]{0.87};
\draw (axis cs:1.5,4.5) node[
  scale=0.75,
  text=white,
  rotate=0.0
]{0.81};
\draw (axis cs:2.5,4.5) node[
  scale=0.75,
  text=white,
  rotate=0.0
]{0.83};
\draw (axis cs:3.5,4.5) node[
  scale=0.75,
  text=white,
  rotate=0.0
]{0.76};
\draw (axis cs:4.5,4.5) node[
  scale=0.75,
  text=white,
  rotate=0.0
]{0};
\end{axis}

\end{tikzpicture}%
            }
            \caption{$N=10$, $\tau = 5$}
            \label{fig:hell10_1}
        \end{subfigure}
    \end{tabular}
	\caption{Pairwise Hellinger distances between the Haar wavelet estimators $\{\hat{\mu}^r_{\mathrm{H}}\}_{r \in \mathcal{R}}$ for \protect\subref{fig:hell_0} $N=100$ and \protect\subref{fig:hell10_0} $N=10$ as well as between the thresholding Haar wavelet estimators $\{\tilde{\mu}^r_{\mathrm{H}}\}_{r \in \mathcal{R}}$ for \protect\subref{fig:hell_1} $N=100$ and \protect\subref{fig:hell10_1} $N=10$. Values close to $1$ (blue) indicate dissimilarity, while a value of $0$ (red) represents equality.}
	\label{fig:hellinger}
\end{figure}

\subsection*{Software and Data Availability}

The \texttt{Python} code to implement all numerical experiments presented in this paper is publicly available and located on
the \texttt{Persistence Wavelets} GitHub repository at 

\url{https://github.com/konstantin-haberle/PersistenceWavelets}.

\section{Discussion}
\label{sec:end}
In this work, we studied the distributional behavior of persistence diagrams in terms of the expected persistence diagram.  We used the flexibility and accuracy of wavelets to nonparametrically estimate the density function describing the distribution. 
Specifically, we have shown that the Haar wavelet estimator $\hat{\mu}_{\mathrm{H}}$ is a minimax estimator for the expected persistence diagram. In contrast to the empirical mean, which is a discrete measure whose support is typically very large, $\hat{\mu}_{\mathrm{H}}$ has a Lebesgue density. We also considered a Haar wavelet estimator which employs hard thresholding of the expansion coefficients, showing that it also achieves near-optimal convergence rates. This estimator, by retaining only the coefficients above a given threshold, offers a sparse representation of the expected persistence diagram. We verified the theoretical results by numerical experiments on two prototypical datasets in TDA, the torus and the double torus, as well as on a clustered process. These numerical results support our findings that Haar wavelets can provide an efficient and accurate density estimation of the expected persistence diagram. The theoretical convergence rates are already apparent for modest sample sizes ($N\sim 100$), moreover, for the same sample size regime, a rather high degree of accuracy is achieved ($\mathrm{OT}_{p} < 10^{-5}$ for $p>1$ and $\mathrm{OT}_{p} \sim 10^{-2}$ for $p=1$). 
The computational complexity evaluating the Haar wavelet-based density estimator at a point in $\Omega$ scales as $\mathcal{O}(N \log_2(N))$. 
We characterized the effect of sparsity induced by hard thresholding, both at the level of the estimated density and its accuracy, showing that the $\mathrm{OT}_{p}$ magnitude stays comparable to that of Haar for a large set of thresholds and that there is a reduction in the occurrence of finely detailed features in the estimated density.
Finally, we demonstrated the practical utility of the Haar wavelet density estimator by applying it to a classification task for dynamical trajectories. Our results illustrated that the proposed Haar wavelet estimation techniques can effectively distinguish between classes of trajectories.

The application of tools from uncertainty quantification in TDA has a number of advantages. Density function estimation enables extrapolation to regions at low sampling as well as the reconstruction of structural features of the distribution of random persistence diagrams, such as gaps, which can lead to more robust insights into the underlying data topology. Due to their localized nature, wavelet transforms are particularly well-suited to capture the local features of the approximated signal. Hard thresholding of wavelet coefficients provides meaningful and sparse representation of the inputs that has been exploited for various applications, such as image compression and denoising \citep{krommweh2010,fryzlewicz2016}.
This sparsity property may be exploited for various data reduction tasks when persistence diagrams consist of numerous points, and to facilitate feature extraction, which can be then leveraged for additional inference tasks, such as prediction and classification.

In general, wavelet transforms provide a multiresolution representation that is especially useful for signal decomposition and processing \citep{mallat1989}. Thanks to their multiresolution representational properties, cascades of filters implementing wavelet transforms have been incorporated in convolutional neural networks, showing advantages in several settings \citep{bruna2013,pedersen2022}, and have been proposed as a mathematical framework to analyze deep convolutional architectures \citep{mallat2016}.
Multiresolution representations of the distribution of persistence diagrams are a promising perspective for further studies for their potential to gain a better understanding of the topology of complex datasets by enabling multiscale separation and the discovery of scale invariant features.  These advantages may be incorporated in the advancement and development of machine learning algorithms for persistence diagrams.

Finally, the Haar basis is the simplest wavelet basis, but other bases, such as Symmlet wavelets, are often more suitable to approximate smooth functions. In particular, choosing an appropriate wavelet basis can lead to a sparser representation thanks to the regularity of the function to be estimated. Also the investigation on convergence could be extended to studying different wavelet transforms, such as Symmlet and Daubechies wavelets and Coiflets \citep{hardle2012wavelets,daubechies1992ten}. 
 These studies, however, would require alternative proof techniques since the construction employed here does not immediately extend to other types of wavelets. The majority of wavelet-based density estimation approaches rely on the empirical estimate of the wavelet series coefficients, as we did in this work, but it is also possible to take into account cases where the basis coefficients are learned via parametric estimators such as maximum likelihood (see, e.g., \citep{peter2008}).


\section*{Acknowledgments}

We wish to thank Mauricio Barahona, Yueqi Cao, Adi Ditkowski, Th\'{e}o Lacombe, and Primo\v{z} \v{S}kraba for helpful discussions.  We also would like to thank two anonymous referees for their helpful comments and suggestions which improved our work.

We wish to acknowledge the Information and Communication Technologies resources at Imperial College London for their computing resources which were used to implement the experiments in this paper.


\bibliographystyle{authordate3}
\bibliography{references.bib}

\begin{thebibliography}{}

\bibitem[\protect\citename{Abramovich {\em et~al.}, }2000]{abramovich2000}
{\sc Abramovich, Felix, Bailey, Trevor~C, \& Sapatinas, Theofanis}. 2000.
\newblock Wavelet analysis and its statistical applications.
\newblock {\em Journal of the Royal Statistical Society: Series D (The Statistician)}, {\bf 49}(1), 1--29.

\bibitem[\protect\citename{Adams {\em et~al.}, }2017]{adams2017persistence_vectorisation}
{\sc Adams, Henry, Emerson, Tegan, Kirby, Michael, Neville, Rachel, Peterson, Chris, Shipman, Patrick, Chepushtanova, Sofya, Hanson, Eric, Motta, Francis, \& Ziegelmeier, Lori}. 2017.
\newblock Persistence images: A stable vector representation of persistent homology.
\newblock {\em Journal of Machine Learning Research}, {\bf 18}.

\bibitem[\protect\citename{Bruna \& Mallat, }2013]{bruna2013}
{\sc Bruna, Joan, \& Mallat, St{\'e}phane}. 2013.
\newblock Invariant scattering convolution networks.
\newblock {\em IEEE Transactions on Pattern Analysis and Machine Intelligence}, {\bf 35}(8), 1872--1886.

\bibitem[\protect\citename{Bubenik, }2015]{bubenik2015statistical}
{\sc Bubenik, Peter}. 2015.
\newblock Statistical topological data analysis using persistence landscapes.
\newblock {\em J. Mach. Learn. Res.}, {\bf 16}(1), 77--102.

\bibitem[\protect\citename{Bubenik \& Wagner, }2020]{bubenik2020embeddings}
{\sc Bubenik, Peter, \& Wagner, Alexander}. 2020.
\newblock Embeddings of persistence diagrams into Hilbert spaces.
\newblock {\em Journal of Applied and Computational Topology}, {\bf 4}(3), 339--351.

\bibitem[\protect\citename{Chazal \& Divol, }2019]{Chazal_Divol_EPD}
{\sc Chazal, Fr\'{e}d\'{e}ric, \& Divol, Vincent}. 2019.
\newblock The density of expected persistence diagrams and its kernel based estimation.
\newblock {\em Journal of Computational Geometry}, {\bf 10}(2), 127--153.

\bibitem[\protect\citename{Cohen, }2003]{cohen2003numerical}
{\sc Cohen, Albert}. 2003.
\newblock {\em Numerical analysis of wavelet methods}.
\newblock Elsevier.

\bibitem[\protect\citename{Conti {\em et~al.}, }2022]{conti2022topological}
{\sc Conti, Francesco, Moroni, Davide, \& Pascali, Maria~Antonietta}. 2022.
\newblock A topological machine learning pipeline for classification.
\newblock {\em Mathematics}, {\bf 10}(17), 3086.

\bibitem[\protect\citename{Crawford {\em et~al.}, }2020]{TDA_progression_disease}
{\sc Crawford, Lorin, Monod, Anthea, Chen, Andrew~X., Mukherjee, Sayan, \& Rabad{\'a}n, Ra{\'u}l}. 2020.
\newblock Predicting Clinical Outcomes in Glioblastoma: An Application of Topological and Functional Data Analysis.
\newblock {\em Journal of the American Statistical Association}, {\bf 115}(531), 1139--1150.

\bibitem[\protect\citename{Daubechies, }1992]{daubechies1992ten}
{\sc Daubechies, Ingrid}. 1992.
\newblock {\em Ten lectures on wavelets}.
\newblock SIAM.

\bibitem[\protect\citename{Divol \& Lacombe, }2021a]{divol2021estimation}
{\sc Divol, Vincent, \& Lacombe, Th{\'e}o}. 2021a.
\newblock Estimation and quantization of expected persistence diagrams.
\newblock {\em Pages  2760--2770 of:} {\em International Conference on Machine Learning}.
\newblock PMLR.

\bibitem[\protect\citename{Divol \& Lacombe, }2021b]{divol_und_top}
{\sc Divol, Vincent, \& Lacombe, Th{\'e}o}. 2021b.
\newblock Understanding the topology and the geometry of the space of persistence diagrams via optimal partial transport.
\newblock {\em Journal of Applied and Computational Topology}, {\bf 5}(1), 1--53.

\bibitem[\protect\citename{Donoho {\em et~al.}, }1996]{Donoho_wavelet_est}
{\sc Donoho, David~L., Johnstone, Iain~M., Kerkyacharian, G\'{e}rard, \& Picard, Dominique}. 1996.
\newblock Density estimation by wavelet thresholding.
\newblock {\em The Annals of Statistics}, {\bf 24}(2), 508--539.

\bibitem[\protect\citename{Flamary {\em et~al.}, }2021]{flamary2021pot}
{\sc Flamary, R{\'e}mi, Courty, Nicolas, Gramfort, Alexandre, Alaya, Mokhtar~Z., Boisbunon, Aur{\'e}lie, Chambon, Stanislas, Chapel, Laetitia, Corenflos, Adrien, Fatras, Kilian, Fournier, Nemo, Gautheron, L{\'e}o, Gayraud, Nathalie~T.H., Janati, Hicham, Rakotomamonjy, Alain, Redko, Ievgen, Rolet, Antoine, Schutz, Antony, Seguy, Vivien, Sutherland, Danica~J., Tavenard, Romain, Tong, Alexander, \& Vayer, Titouan}. 2021.
\newblock POT: Python Optimal Transport.
\newblock {\em Journal of Machine Learning Research}, {\bf 22}(78), 1--8.

\bibitem[\protect\citename{Fournier \& Guillin, }2015]{Wasserstein_partitioning_arg}
{\sc Fournier, Nicolas, \& Guillin, Arnaud}. 2015.
\newblock On the rate of convergence in {W}asserstein distance of the empirical measure.
\newblock {\em Probability Theory and Related Fields}, {\bf 162}(3-4), 707--738.

\bibitem[\protect\citename{Fryzlewicz \& Timmermans, }2016]{fryzlewicz2016}
{\sc Fryzlewicz, Piotr, \& Timmermans, Catherine}. 2016.
\newblock SHAH: SHape-Adaptive Haar wavelets for image processing.
\newblock {\em Journal of Computational and Graphical Statistics}, {\bf 25}(3), 879--898.

\bibitem[\protect\citename{H{\"a}rdle {\em et~al.}, }2012]{hardle2012wavelets}
{\sc H{\"a}rdle, Wolfgang, Kerkyacharian, Gerard, Picard, Dominique, \& Tsybakov, Alexander}. 2012.
\newblock {\em Wavelets, approximation, and statistical applications}.
\newblock  Vol. 129.
\newblock Springer Science \& Business Media.

\bibitem[\protect\citename{Kang {\em et~al.}, }2013]{hudson2013}
{\sc Kang, In, Hudson, Irene, Rudge, Andrew, \& Chase, J~Geoffrey}. 2013.
\newblock Density estimation and wavelet thresholding via Bayesian methods: A wavelet probability band and related metrics approach to assess agitation and sedation in ICU patients.
\newblock {\em Discrete Wavelet Transforms: A Compendium of New Approaches and Recent Applications. 1st ed. Rijeka: IntechOpen},  127--162.

\bibitem[\protect\citename{Krommweh, }2010]{krommweh2010}
{\sc Krommweh, Jens}. 2010.
\newblock Tetrolet transform: A new adaptive Haar wavelet algorithm for sparse image representation.
\newblock {\em Journal of Visual Communication and Image Representation}, {\bf 21}(4), 364--374.

\bibitem[\protect\citename{Li {\em et~al.}, }2014]{TDA_CompVision}
{\sc Li, Chunyuan, Ovsjanikov, Maks, \& Chazal, Fr\'{e}d\'{e}ric}. 2014.
\newblock Persistence-Based Structural Recognition.
\newblock {\em Pages  2003--2010 of:} {\em 2014 IEEE Conference on Computer Vision and Pattern Recognition}.

\bibitem[\protect\citename{Mallat, }2016]{mallat2016}
{\sc Mallat, St{\'e}phane}. 2016.
\newblock Understanding deep convolutional networks.
\newblock {\em Philosophical Transactions of the Royal Society A: Mathematical, Physical and Engineering Sciences}, {\bf 374}(2065), 20150203.

\bibitem[\protect\citename{Mallat, }1989]{mallat1989}
{\sc Mallat, Stephane~G}. 1989.
\newblock A theory for multiresolution signal decomposition: the wavelet representation.
\newblock {\em IEEE Transactions on Pattern Analysis and Machine Intelligence}, {\bf 11}(7), 674--693.

\bibitem[\protect\citename{Mileyko {\em et~al.}, }2011]{mileyko2011probability}
{\sc Mileyko, Yuriy, Mukherjee, Sayan, \& Harer, John}. 2011.
\newblock Probability measures on the space of persistence diagrams.
\newblock {\em Inverse Problems}, {\bf 27}(12), 124007.

\bibitem[\protect\citename{Monod {\em et~al.}, }2019]{TDA_viral_evolution}
{\sc Monod, Anthea, Kali\v{s}nik, Sara, Pati\~{n}o Galindo, Juan~\'{A}ngel, \& Crawford, Lorin}. 2019.
\newblock Tropical Sufficient Statistics for Persistent Homology.
\newblock {\em SIAM Journal on Applied Algebra and Geometry}, {\bf 3}(2), 337--371.

\bibitem[\protect\citename{Pedersen {\em et~al.}, }2022]{pedersen2022}
{\sc Pedersen, Christian, Eickenberg, Michael, \& Ho, Shirley}. 2022.
\newblock Learnable wavelet neural networks for cosmological inference.
\newblock {\em In:} {\em ICML 2022 Workshop on Machine Learning for Astrophysics}.

\bibitem[\protect\citename{Perea \& Harer, }2015]{TDA_signal_analysis}
{\sc Perea, Jose~A, \& Harer, John}. 2015.
\newblock Sliding windows and persistence: An application of topological methods to signal analysis.
\newblock {\em Foundations of Computational Mathematics}, {\bf 15}(3), 799--838.

\bibitem[\protect\citename{Peter \& Rangarajan, }2008]{peter2008}
{\sc Peter, Adrian~M, \& Rangarajan, Anand}. 2008.
\newblock Maximum likelihood wavelet density estimation with applications to image and shape matching.
\newblock {\em IEEE Transactions on Image Processing}, {\bf 17}(4), 458--468.

\bibitem[\protect\citename{Reininghaus {\em et~al.}, }2015]{Kernel_PD_reininghaus2015stable}
{\sc Reininghaus, Jan, Huber, Stefan, Bauer, Ulrich, \& Kwitt, Roland}. 2015.
\newblock A stable multi-scale kernel for topological machine learning.
\newblock {\em Pages  4741--4748 of:} {\em Proceedings of the IEEE conference on computer vision and pattern recognition}.

\bibitem[\protect\citename{Tauzin {\em et~al.}, }2020]{tauzin2020giottotda}
{\sc Tauzin, Guillaume, Lupo, Umberto, Tunstall, Lewis, P{\'e}rez, Julian~Burella, Caorsi, Matteo, Medina-Mardones, Anibal, Dassatti, Alberto, \& Hess, Kathryn}. 2020.
\newblock {\em giotto-tda: A Topological Data Analysis Toolkit for Machine Learning and Data Exploration}.

\bibitem[\protect\citename{{The GUDHI Project}, }2015]{gudhi:urm}
{\sc {The GUDHI Project}}. 2015.
\newblock {\em {GUDHI} User and Reference Manual}.
\newblock {GUDHI Editorial Board}.

\bibitem[\protect\citename{Turner {\em et~al.}, }2014]{turner-2014-frechet}
{\sc Turner, Katharine, Mileyko, Yuriy, Mukherjee, Sayan, \& Harer, John}. 2014.
\newblock Fr{\'e}chet means for distributions of persistence diagrams.
\newblock {\em Discrete \& Computational Geometry}, {\bf 52}(1), 44--70.

\bibitem[\protect\citename{Weed \& Bach, }2019]{Wasserstein_partition_arg2}
{\sc Weed, Jonathan, \& Bach, Francis}. 2019.
\newblock Sharp asymptotic and finite-sample rates of convergence of empirical measures in {W}asserstein distance.
\newblock {\em Bernoulli}, {\bf 25}(4A), 2620--2648.

\bibitem[\protect\citename{Zhao \& Zhang, }2005]{zhao2005}
{\sc Zhao, Qibin, \& Zhang, Liqing}. 2005.
\newblock ECG feature extraction and classification using wavelet transform and support vector machines.
\newblock {\em Pages  1089--1092 of:} {\em 2005 International Conference on Neural Networks and Brain},  vol. 2.
\newblock IEEE.

\bibitem[\protect\citename{Zomorodian \& Carlsson, }2005]{zomorodian_computing_2005}
{\sc Zomorodian, Afra, \& Carlsson, Gunnar}. 2005.
\newblock Computing {Persistent} {Homology}.
\newblock {\em Discrete \& Computational Geometry}, {\bf 33}(2), 249--274.

\end{thebibliography}

\end{document}